\documentclass[12pt]{amsart}

\usepackage{amsmath,graphicx}
\usepackage{adjustbox}
\usepackage{amssymb}
\usepackage{tikz, tikz-cd}
\usepackage{fullpage}
\usepackage[all]{xy}
\usepackage[margin=1in]{geometry}
\usepackage{amsthm}
\usepackage{amsfonts}
\usepackage{bbm}
\usepackage{comment}
\usepackage{amsmath}
\usepackage{amsthm,diagbox}
\usepackage{thm-restate}

\usepackage{subcaption}

\usepackage{array} 
\usepackage{color}
\usepackage[utf8]{inputenc}
\usepackage[english]{babel}
\usepackage{hyperref}
\newcolumntype{L}{>{$}l<{$}}
\usepackage{bm}

\DeclareMathSymbol{\shortminus}{\mathbin}{AMSa}{"39}

\newtheorem{theorem}{Theorem}[section]
\newtheorem{lemma}[theorem]{Lemma}

\newtheorem{conjecture}[theorem]{Conjecture}
\newtheorem{proposition}[theorem]{Proposition}
\theoremstyle{definition}  
\newtheorem{definition} [theorem] {Definition} 

\newtheorem{example} [theorem] {Example}
\newtheorem{remark} [theorem] {Remark}
\newtheorem{question} [theorem] {Question}

\theoremstyle{definition}

\newcommand{\D}{{\mathbb{D}}}

\newcommand{\Q}{{\mathbb{Q}}}

\newcommand{\R}{{\mathbb{R}}}
\newcommand{\Z}{{\mathbb{Z}}}

\newcommand{\im}{\text{im}}

\newcommand{\surj}{\twoheadrightarrow}

\newcommand{\PSL}{\text{PSL}}

\newcommand{\spin}{\text{spin}}
\newcommand{\Sym}{\text{Sym}}

\newcommand{\T}{\bm{T}}
\newcommand{\s}{\mathfrak{s}}

\DeclareMathOperator{\HFK}{HFK}

\DeclareMathOperator{\HF}{HF}

\DeclareMathOperator{\SFH}{SFH}

\DeclareMathOperator{\CF}{CF}
\DeclareMathOperator{\CFK}{CFK}

\DeclareMathOperator{\rank}{rank}

\DeclareMathOperator{\sing}{sing}
\DeclareMathOperator{\codim}{codim}
\DeclareMathOperator{\ind}{ind}

\usepackage[pagewise]{lineno}

\title{The next-to-top term of the knot Floer homology of some non-fibered knots}
\author[Binns]{Fraser Binns}
\author[Wan]{Shunyu Wan}
\address[]{Department of Mathematics, Princeton University}
\email{fb1673@princeton.edu}
\address[]{Department of Mathematics, Georgia Institute of Technology}
\email{swan48@gatech.edu}
\date{\today}
\thanks{FB was supported by the Simons Grant {\em New structures in low-dimensional topology}. SW was supported by the Georgia Tech Postdoc funding.}
\keywords{Knot Floer homology}
\subjclass{57K18}

\begin{document}

\begin{abstract}
 Sivek conjectured that the rank of knot Floer homology in the next-to-top Alexander grading is at least the rank in the top Alexander grading. Baldwin and Vela-Vick verified this conjecture in the case of fibered knots~\cite{baldwin_note_2018}. Ni gave a generalization of this result (for knots in generalized $L$-spaces) to  cases in which the knot Floer homology satisfies an algebraic condition~\cite{Ninexttotop}. We give an independent generalization of Baldwin and Vela-Vick's result to a family of knots with Seifert surfaces satisfying certain conditions. 
\end{abstract}
\maketitle

\section{Introduction}

 Knot Floer homology is a powerful knot invariant due independently to Ozsv\'ath-Szab\'o~\cite{Holomorphicdisksandknotinvariants} and J.Rasmussen~\cite{Rasmussen}, taking value in the category of bigraded vector spaces. In this paper we will be interested in a special case of the following general question:

\begin{question}[The Geography Question]
    Which bigraded vector spaces arise as the knot Floer homology of a knot?
\end{question}

The following is a conjectural constraint on the geography of knot Floer homology: 

\begin{conjecture}[Sivek]\label{con:sivek}
   If $K$ is a non-trivial null-homologous knot in a $3$-manifold $Y$, then \begin{align*}
    \rank(\widehat{\HFK}(K,Y,g(K)-1))\geq \rank(\widehat{\HFK}(K,Y,g(K))).\end{align*}
\end{conjecture}

The $Y=S^3$ case of this conjecture is posed as a question in~\cite[Question 1.12]{baldwin_note_2018}. Note that the statement is false in the $g(K)=0$ case, since then $\widehat{\HFK}(K,Y)$ is supported in a single Alexander grading.

There has been some progress towards proving Conjecture~\ref{con:sivek}. Notably, it was verified by Ni under the hypothesis that $\widehat{\HFK}(K,Y,g(K))$ is supported in a single mod 2 Maslov grading and $Y$ is a \emph{generalized $L$-space}~\cite[Theorem 1.1]{Ninexttotop}. This can be viewed as an extension of a special case of a result of Baldwin and Vela-Vick \cite{baldwin_note_2018}, who verified the conjecture in the case that $K$ is fibered, without any additional hypotheses on $Y$ (here $\rank(\widehat\HFK(K,Y,g(K))=1$ by~\cite{ni2007knot,ghiggini2008knot}, whence in turn $\widehat\HFK(K,Y,g(K))$ is homogeneous). In this paper we generalize Baldwin and Vela-Vick's techniques to the setting of knots with \emph{$w$-avoiding exterior}. Informally, knots with $w$-avoiding exterior are knots admitting Seifert surfaces with relatively simple exteriors.

\begin{theorem}\label{thm:mainnoMaslov}
    If $K$ is a non-trivial knot with $w$-avoiding exterior, then ${\widehat{\HFK}(K,Y,g(K)-1)}$ contains a $\widehat{\HFK}(K,Y,g(K))[-1]$ summand.
\end{theorem}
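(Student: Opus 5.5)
We adapt the Baldwin--Vela-Vick argument: build a Heegaard diagram adapted to a minimal genus Seifert surface $S$, and exhibit a differential between the top and next-to-top Alexander gradings that is forced to vanish in one direction and to be injective in a suitable sense. The $w$-avoiding condition replaces the fibration hypothesis.

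\emph{Step 1: the sutured description of the top grading.} By Juh\'asz's sutured Floer homology,
\[
\widehat{\HFK}(K,Y,g(K))\cong \SFH(Y(S)),
\]
where $Y(S)$ is the exterior of a minimal genus Seifert surface $S$, with $R_\pm=S_\pm$. I take a balanced diagram $(\Sigma,\bm\alpha,\bm\beta)$ for $Y(S)$. I then build a doubly pointed diagram for $(Y,K)$ by gluing two copies of $S$ (one boundary-parallel to each side), adding $2g$ arcs $a_i$ on $S$ that cut it into a disk, and closing up as in Baldwin--Vela-Vick and Ozsv\'ath--Szab\'o's adapted diagrams. I place the basepoints $w$ and $z$ on either side of the knot. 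In this diagram the generators in Alexander grading $g$ are exactly those using the ``top'' intersection points $x_i\in a_i\cap b_i$ together with generators of $\SFH(Y(S))$. This gives a chain-level identification of $\CFK$ in grading $g$ with $\CF(Y(S))$.

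\emph{Step 2: a map into grading $g-1$.} I modify one pair $a_1,b_1$ near $\partial S$ by a finger move, which creates a pair of new intersection points $c,c'$ adjacent to the region containing $w$. Let $\Phi$ send a generator $\mathbf x\cup\{x_1,\dots\}$ to the generator with $x_1$ replaced by $c$; this drops the Alexander grading by one and the Maslov grading by one. I define $\Psi$ in the reverse direction using $c'$ and the small bigon through $w$. The claim is that $\Phi$ and $\Psi$ are chain maps between $\CFK(g)$ and a summand of $\CFK(g-1)$, with $\Psi\circ\Phi=\mathrm{id}$. Then $\Phi_*$ is injective with image a direct summand, which gives the $[-1]$-shifted summand.

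\emph{Step 3: the main obstacle} is verifying that $\Phi$ commutes with differentials. One must show that every holomorphic disk counted in $\partial$ on grading $g-1$, starting at a $c$-generator, is either the image of a disk in $\CF(Y(S))$ or crosses $w$. This is where the $w$-avoiding hypothesis enters. I expect its definition to guarantee that domains in the exterior diagram with positive multiplicity cannot reach the region next to the finger move except through the $w$-region. If so, then any extra domain would have $n_w>0$ and is excluded in $\widehat{\CFK}$. Concretely, I would argue with positivity of domains and a multiplicity count at the corners $c,c'$: a domain with a corner at $c$ but not at $x_1$ must contain the region adjacent to $\partial S$, and by $w$-avoidance this forces $n_w\ge1$. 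The fibered case is recovered because there $\SFH(Y(S))$ is a product, so the diagram has no domains at all.

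Finally I check the Maslov shift via the index of the small bigon, and non-triviality via $g\ge1$, which ensures that grading $g-1$ differs from grading $g$.
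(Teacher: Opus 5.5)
Your proposal has a genuine gap in Steps 2--3, and the mechanism cannot work as described.

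\textbf{The finger move and the direction of the map.} A finger move is an isotopy in the complement of the basepoints. So the new points $c,c'$ are joined by a small bigon with $n_z=n_w=0$, and the generators using $c$ and $c'$ cancel against each other. Consequently $\Phi$ either fails to be a chain map (under $\partial$ its image picks up $c'$-terms) or lands on something that is essentially a boundary.

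More fundamentally, no map of this fixed type can work for every knot the theorem covers. Take $\Psi$ to be the natural chain map, namely the $n_w=1$ part of the differential $\widehat{\HFK}(K,g-1)\to\widehat{\HFK}(K,g)$, i.e.\ the $d_1$ of the spectral sequence that ignores $w$. With the usual conventions it is surjective for the right-handed trefoil but zero for the left-handed one, because there the top class survives to $\widehat{\HF}(S^3)$. Both trefoils are fibered and hence covered by the theorem, so $\Psi_*\Phi_*=\mathrm{id}$ is impossible in general. A count of one distinguished bigon, on the other hand, is not a chain map.

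The missing idea is the veering dichotomy the paper takes from Baldwin--Vela-Vick and Honda--Kazez--Mati\'c. Choose the arc so that $\alpha_1\cap\Sigma_G=\emptyset$; this is exactly what the assumption that $\Sigma_P$ is not a disk or annulus provides. Then compare $\beta_1$, the image of $a_1$ under the monodromy, with $\alpha_1$.
\begin{itemize}
\item If $\beta_1$ lies strictly to the left, there are genuine points $x_j\in\alpha_1\cap\beta_1$ with $\partial\sum_j x_j=c_1$ via bigons through $w$. On a nice diagram obtained by contact-class-preserving Sarkar--Wang moves one computes $\partial_{\widehat{\CF}}\sum_j\bm{d}_j=\bm{c}(\bm{x})$ for contact-class-like cycles. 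So $d_1$ surjects onto them, and a surjection over $\Z/2$ splits.
\item If $\beta_1$ lies to the right, the symmetry relating $(-Y,-K)$ and $(Y,-K)$ swaps the cases.
\item If $\alpha_1\simeq\beta_1$, then $Y$ has an $S^1\times S^2$ summand and the $H_1$-action replaces $d_1$.
\end{itemize}

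\textbf{Step 1 and the role of the hypothesis.} Step 1 assumes the other hard part, and it misplaces the hypothesis. The $\alpha_1,\beta_1$ argument needs a diagram with open-book structure. In such a diagram, placing $z$ and $w$ on $\partial\Sigma_S$ requires handleslides of the $\bm{\beta_S}$-curves, and these create extra generators in the extremal Alexander grading. So there the extremal grading is not just $\{\bm{e}\cup\bm{c}\}$. Proving that these contact-class-like elements still generate its homology is Vela-Vick's theorem in the fibered case and the main content of Section~4 in general. Juh\'asz's surface diagrams do give a chain-level identification with $\CF(Y(S))$, but they lack the structure Step 2 needs, and you need both at once.

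This is where $w$-avoidance enters, and it says nothing about domains near the basepoint $w$ of $K$. The $w$ in the definition is the basepoint of an auxiliary knot whose sutured exterior, glued to a product piece along two product annuli, gives $Y(S)$. After neck-stretching along a boundary curve of $\Sigma_G$, that basepoint becomes the pinch point $p$. The hypothesis then forbids index-one classes in the top piece with $n_p=1$, which kills the contributions of $\phi_1$ and $\phi_3$. The classes $\phi_2$ and $\phi_4$ cancel mod $2$ by a matched-moduli-space argument. The paper proves this for boundary twists $\tau^n$ and then extends it to arbitrary monodromy via the Dehn-twist exact triangle.
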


Here $[k]$ denotes an upward shift in the Maslov grading by $k$. To define knots with $w$-avoiding exterior, we require the following preliminary Heegaard-diagrammatic definition: 

\begin{definition}
   A knot is \emph{$w$-avoiding} if it admits an admissible Heegaard diagram $(\Sigma,\bm{\alpha},\bm{\beta},z,w)$ such that every Maslov index $1$ disk has multiplicity zero at $w$.
\end{definition}

 See Section~\ref{subsec:knotFloer} for the definition of admissible. Examples of such knots include the cores of any surgery on the unknot. Other examples can be obtained from \emph{strong $L$-spaces}, as introduced in~\cite{MR3091604}. These are $L$-spaces admitting Heegaard diagrams, $\mathcal{H}=(\Sigma,\bm{\alpha},\bm{\beta},z)$, such that $\T_{\bm{\alpha}}\cap\T_{\bm{\beta}}=|\widehat{\HF}(Y)|$. Examples include double branched covers of non-split alternating links~\cite[Corollary 3.6]{MR3065184}. To obtain a $w$-avoiding knot from a Heegaard diagram for a strong $L$-space, one adds a basepoint $w$ to $\Sigma$. There can be no Maslov index $1$ disk with non-zero multiplicity at $w$, since each generator lies in a distinct $\spin^c$-structure, and so cannot be connected by disks of any index.

\begin{remark}
    As pointed out to the authors by Yi Ni, it is straightforward to check that null-homologous $w$-avoiding knots in L-spaces satisfy Conjecture \ref{con:sivek}, using the arguments he gives in~\cite{Ninexttotop}.
\end{remark}

We can now clarify the new terminology in the statement of Theorem~\ref{thm:mainnoMaslov}.

\begin{definition}\label{def:avoidext}

 A knot $K$ has \emph{$w$-avoiding exterior} if $K$ has a minimal genus Seifert surface whose sutured exterior admits a decomposition along a pair of product annuli to a product sutured manifold and the sutured exterior of some $w$-avoiding knot. 

\end{definition}

See Section~\ref{subsec:suturedmanifolds} and Remark~\ref{rem:generalhard} for relevant definitions and further discussion. 

Our result is a strict generalization of Baldwin and Vela-Vick's result that fibered knots satisfy Sivek's conjecture~\cite{baldwin_note_2018}. Non-fibered examples of knots with $w$-avoiding exterior include $5_2$ in $S^3$; see Example~\ref{ex:52}. This particular example also satisfies Ni's algebraic condition and is a knot in a generalized $L$-space. However, if $K$ has $w$-avoiding exterior, one can check that the core of $\frac{1}{n}$-surgery on $K$ has a minimal genus Seifert surface exterior diffeomorphic to the exterior of a minimal genus Seifert surface for $K$. Since, for example, $\frac{1}{n}$-surgery on $5_2$ is not a generalized $L$-space --- which can be seen using the rational surgery formula~\cite{ozsvath2010knotrationalsurgeries}, or its reinterpretation in terms of immersed curves~\cite{hanselman2016bordered} --- we see that Theorem~\ref{thm:mainnoMaslov} gives examples of knots satisfying Sivek's conjecture that are not covered by Ni's theorem.

The idea of the proof of Theorem~\ref{thm:mainnoMaslov} is concrete; we work at the level of Heegaard diagrams and explicitly exhibit an appropriate number of generators of the relevant bigradings. The first step in this process is to find generators of the minimal Alexander grading, extending work of Vela-Vick who found generators of the minimal Alexander grading of fibered knots~\cite{vela2011transverse}, and then applying a similar trick to that given in~\cite{baldwin_note_2018} we find generators in the next-to-bottom Alexander grading. 

The outline of the paper is as follows. In Section~\ref{sec:background} we review relevant background material on Heegaard Floer homology. In Section~\ref{sec:Heegaarddiagrams} we discuss the Heegaard diagrams we will use for the rest of the paper. In Section~\ref{sec:minimalgrading} we find generators of the bottom grading of knot Floer homology. In Section~\ref{sec:nexttominimal} we find generators of the next-to-bottom Alexander grading.

\subsection*{Acknowledgments}
We would like to thank the organizers of the fall 2023 AMS central sectional meeting, where we began working on this project. The first author would like to thank Diana Hubbard for a number of helpful conversations on a related project, some technical aspects of which overlap with those of the current paper. He would also like to thank Gary Guth, Gheehyun Nahm, Robert Lipshitz, Peter Ozsv\'ath, Zolt\'an Szab\'o, and Yonghan Xiao for other useful conversations. The second author would like to thank Tom Mark for useful conversations.
\section{Background}\label{sec:background}

In this Section we survey background material that we will use in subsequent sections. In Section~\ref{subsec:knotFloer} we briefly review aspects of knot Floer homology, in Section~\ref{subsec:suturedmanifolds} we briefly review sutured manifolds, and in Section~\ref{sec:SFH} we briefly review sutured Floer homology. Throughout this paper we work with null-homologous knots. 
\subsection{Knot Floer homology}\label{subsec:knotFloer}

Knot Floer homology is a knot invariant due independently to Ozsv\'ath and Szab\'o~\cite{ozsvath2005knot} and J. Rasmussen~\cite{Rasmussen}. The knot Floer homology of $K$ is denoted by $\widehat{\HFK}(K)$.

We will use a modified version of knot Floer homology, corresponding to knots decorated with an even number of basepoints. A knot decorated with $2n>0$-basepoints can be encoded by a multi-pointed Heegaard diagram $\mathcal{H}=(\Sigma,\bm{\alpha},\bm{\beta},\bm{z},\bm{w})$. For reasons that will become apparent in Section~\ref{sec:Heegaarddiagrams} and Section~\ref{sec:minimalgrading}, we will be primarily interested in the case in which $n=2g(K)$. We will also assume that all of our Heegaard diagrams are \emph{admissible}; that is every \emph{periodic domain} with multiplicity $0$ at each $z$-basepoint or boundary component of $\Sigma$ has positive and negative coefficients. Knot Floer homology then assigns to each such Heegaard diagram a finitely generated chain complex over $\Z/2$, $\widetilde{\CFK}(\mathcal{H})$, equipped with a differential $\partial_{\widetilde{\CFK}}$ that counts pseudo-holomorphic disks in an auxiliary symplectic manifold. The homology of this complex is denoted by $\widetilde{\HFK}(\mathcal{H})$ or $\widetilde{\HFK}(K,n)$ and is an invariant of the pointed knot. In the case that $n=1$ we recover the original version of knot Floer homology --- i.e. we have that 
\begin{align*}\widetilde{\HFK}(K,1)\cong\widehat{\HFK}(K).\end{align*}
\label{eq:tildehat}

$\widetilde{\HFK}(K,n)$ comes equipped with a $\Q$-valued grading called the \emph{Maslov grading}. A choice of relative homology class of Seifert surface $S$ for $K$ induces a grading $A_{S}$ on $\widetilde{\HFK}(K,n)$ called the \emph{Alexander grading for $[-S]$}. We will sometimes leave the choice of Seifert surface implicit.

The dependence of $\widetilde{\HFK}(K,n)$ on $n$ is straightforward; 
\begin{align}
\widetilde{\HFK}(K,n+1)\cong\widetilde{\HFK}(K,n)\otimes V.\end{align}\label{eq:tilden} Here $V$ is a rank two vector space supported in $(A,m)$ gradings $(\frac{1}{2},0)$ and $(-\frac{1}{2},-1)$.

If $K$ is a null-homologous knot decorated with $2n$ basepoints then there is a spectral sequence from $\widetilde{\CFK}(K,n)$ with $E_2$ page $\widetilde{\HFK}(K,n)$ and $E_\infty$ page $\widehat{\HF}(Y)\otimes V^{\otimes(n-1)}$. Here $\widehat{\HF}(Y)$ is the \emph{Heegaard Floer homology} of $Y$, an invariant introduced by Ozsv\'ath and Szab\'o~\cite{ozsvath2004holomorphic} prior to knot Floer homology.

$\widetilde{\HFK}(K,n)$ has a number of useful properties. For example, the maximum non-trivial Alexander grading in which $\widetilde{\HFK}(K,n)$ is non-trivial, $A_\text{max}$ is given by $g(K)+\dfrac{n-1}{2}$~\cite{OSgenusbounds}. Moreover, $K$ is fibered if and only if $\widetilde{\HFK}(K,n)$ has rank one in Alexander grading $g(K)+\dfrac{n-1}{2}$~\cite{ni2007knot,ghiggini2008knot}.

In this paper we will define maps on knot Floer homology groups. It is common to define such maps using counts of pseudo-holomorphic triangles. Given $\bm{\alpha},\bm{\beta},\bm{\gamma}$ collection of $g$ curves in a surface of genus $g$, $\Sigma$, satisfying appropriate non-degeneracy assumptions, we have a chain map $$f:\widehat{\CFK}(\mathcal{H}_{\bm{\alpha},\bm{\beta}})\otimes\widehat{\CFK}(\mathcal{H}_{\bm{\beta},\bm{\gamma}})\to\widehat{\CFK}(\mathcal{H}_{\bm{\alpha}\bm{,\gamma}}).$$

\noindent This is given by counting holomorphic disks subject to the boundary conditions indicated in Figure~\ref{fig:triangle}.

\begin{figure}[htbp]
    \centering
    \begin{tikzpicture}
        \coordinate (A) at (0,0);
        \coordinate (B) at (2,0);
        \coordinate (C) at (1,{sqrt(3)});
        
        \draw[red, thick] (A) -- (B) node[midway, below] {$\alpha$};
        \draw[blue, thick] (B) -- (C) node[midway, right] {$\beta$};
        \draw[green, thick] (C) -- (A) node[midway, left] {$\gamma$};   
        \node[below] at (A) {};
        \node[below] at (B) {};
        \node[above] at (C) {$\bm{\theta}$};
    \end{tikzpicture}
    \caption{ $f$ and $f_{\bm{\theta}}$ count triangles of this form.}
    \label{fig:triangle}
\end{figure}
\noindent Hence if we have a cycle $\bm{\theta}\in \widehat{\CFK}(\mathcal{H}_{\bm{\beta},\bm{\gamma}})$ we obtain a map $f_\theta:\widehat{\CFK}(\mathcal{H}_{\bm{\alpha},\bm{\beta}})\to\widehat{\CFK}(\mathcal{H}_{\bm{\alpha},\bm{\gamma}})$ defined by $f_{\bm{\theta}}:\bm{x}\to f(\bm{x}\otimes \bm{\theta})$. Note that since we will work with Heegaard diagrams where we reverse the roles of $\alpha$ and $\beta$-curves, our counts of holomorphic disks will look different in practice.

\subsection{Sutured Manifolds}\label{subsec:suturedmanifolds}
A \emph{sutured manifold},  $(Y,\gamma)$, is a $3$-manifold $Y$ equipped with a decomposition of its boundary, $\gamma$. See~\cite[Definition 2.6]{gabai1983foliations} for details. These manifolds were first studied by Gabai in the context of taut foliations~\cite{gabai1983foliations}. In this paper we will only be interested in a subset of manifolds called \emph{balanced sutured manifolds}, as defined in~\cite[Definition 2.2]{juhasz2006holomorphic}. We will typically suppress the term ``balanced" henceforth.

If $K$ is a knot in a $3$-manifold $M$, then the exterior of $K$ in $M$ --- which we denote $Y(K)$ --- carries a natural sutured structure $(Y(K),\gamma(K,n))$ where $\gamma(K,n)$ consists of $2n$ appropriately oriented meridians. Likewise, if $S$ is a Seifert surface for a knot $K$ in $M$ then the exterior of $S$, which we denote $Y(S)$, carries a natural sutured structure $(Y(S),\gamma(S))$, where $\gamma(S)$ consists of a single push-off of $K=\partial S$ into $\partial(Y(S))$.

A \emph{product annulus} in a sutured manifold $(Y,\gamma)$ is a properly embedded annulus in $Y$ with one boundary component in $R_+(\gamma)$ and the other in $R_-(\gamma)$. Given a product annulus $A$ in a sutured manifold $(Y,\gamma)$, one can obtain a new sutured manifold $(Y',\gamma')$ where $Y'=Y\setminus\nu(A)$ and $\gamma'=\gamma\cup \partial_\pm(\nu(A))$, where $\partial_\pm(\nu(A))$ are the pair of push-offs of $A$ in $\partial Y'$.

\begin{example}\label{ex:52}
    In the introduction, we claimed that $5_2$ has $w$-avoiding exterior. To see this, first recall that the \emph{guts} of a knot $K$ are the non-product components of the sutured manifold obtained by decomposing the sutured exterior of $K$ along a maximal family of disjoint minimal genus Seifert surfaces, and then decomposing that sutured manifold along a maximal family of incompressible, non-boundary parallel decomposing annuli. The guts of $5_2$ are the sutured exterior of the core of $2$-surgery on the unknot~\cite{baldwin2022floer,li2022seifert}. This is a $w$-avoiding knot; it admits an admissible genus one Heegaard diagram with two intersection points and no disks. In particular, $5_2$ has a minimal genus Seifert surface whose sutured exterior admits a decomposition along a pair of product annuli into the sutured exterior of the core of two-surgery on the unknot and a product sutured manifold $(P\times[0,1],\partial P\times[0,1])$, where  $P$ is a pair of pants.
\end{example} 

Sutured manifolds can be thought of as $3$-manifolds which admit \emph{sutured Heegaard diagrams}. A sutured Heegaard diagram consists of a surface with boundary $\Sigma$, a collection of $n$ homologically linearly independent curves $\bm{\alpha}$ and another collection of $n$ homologically linearly independent curves, $\bm{\beta}$. Given $(\Sigma,\bm{\alpha},\bm{\beta})$, one can obtain $M$ by attaching disks along their boundaries to each of the $\bm{\alpha}$ and $\bm{\beta}$-curves and then thickening the resulting complex. $\gamma$ is then given by a neighborhood of the boundary of $\Sigma$ in the boundary of $M$.

\subsection{Sutured Floer Homology}\label{sec:SFH}
Juh\'asz generalized knot Floer homology to \emph{sutured Floer homology}, an invariant of balanced sutured manifolds~\cite{juhasz2006holomorphic}. Sutured Floer homology has underlying chain complex $\CF(Y,\gamma)$, which is freely generated over intersection points between two auxiliary Lagrangians in an auxiliary symplectic manifold. The differential, $\partial_{\CF}$, counts certain pseudo-holomorphic curves in the auxiliary symplectic manifold. The knot Floer homology of a $2n$ pointed knot $K$ in a three manifold $M$ is exactly the sutured Floer homology of $(Y(K)),\gamma(K,n))$.

\section{Topology and Heegaard diagrams}\label{sec:Heegaarddiagrams}
In this section we discuss Heegaard diagrams for knots and links obtained from sutured Heegaard diagrams for their Seifert surface complements. In Section~\ref{subsec:knotcomplementstosuturedmanifolds} we discuss how to glue certain sutured manifolds together to get sutured Seifert surface and knot exteriors. In Sections~\ref{subsec:HDsurfaceexterior}, \ref{subsec:HDformanifolds}, and \ref{subsec:HFforknots} we discuss how to obtain Heegaard diagrams for various manifolds and knots. In Section~\ref{subsec:admissibility} we verify that these Heegaard diagrams are admissible, i.e. can be used to compute Heegaard Floer homology. Finally, in Section~\ref{subsec:gradings}, we remind the reader how to determine the Alexander gradings of generators of the knot Floer chain complex from such Heegaard diagrams.

\subsection{From sutured manifolds to sutured knot exteriors}\label{subsec:knotcomplementstosuturedmanifolds}
Let $(G,\gamma)$ be an arbitrary sutured manifold. 
Let $(P,\rho)$ be a product sutured manifold with $|\rho|=|\gamma|+1$.
Let $(Y,\gamma_0)$ be a sutured manifold obtained by identifying $|\gamma|$ of $(P,\rho)$'s sutures with those of $(G,\gamma)$.

Pick a self-indexing Morse function $f_P$ on $P$ without critical points, the gradient flow of $f_P$ gives a diffeomorphism $\mathfrak{i}_P:R_+(\rho)\to R_-(\rho)$. This diffeomorphism can be extended to a diffeomorphism ${\mathfrak{i}:R_+(\gamma_0)\to R_-(\gamma_0)}$. Pick and fix such a diffeomorphism $\mathfrak{i}$ for the rest of this paper.

We will consider knots admitting Seifert surfaces that are diffeomorphic to $(Y,\gamma_0)$. That is, we will consider knots whose exteriors can be reconstructed as
\begin{align*}Y/(\mathfrak{i} \circ \psi):R_+(\gamma_0)\to R_-(\gamma_0)\end{align*} 
for a choice of diffeomorphism (``monodromy'') $\psi:R_+(\gamma_0)\to R_+(\gamma_0)$ such that $ {\psi|_{\partial R_+(\gamma_0)}=id|_{\partial R_+(\gamma_0)}}$. Here we equip the single torus boundary component of $Y/(\mathfrak{i} \circ \psi)$ with a pair of meridians given by the images in $Y/(\mathfrak{i} \circ \psi:R_+(\gamma_0)\to R_-(\gamma_0))$ of the pair of flow lines of $f_{P}$ in $(Y,\gamma_0)$ containing some pair of points in $\partial R_+(\gamma_0)$. We denote the resulting knot by $K_\psi$; strictly speaking, it also depends on the choice of $i$, which we suppress from the notation.

\subsection{Heegaard Diagrams for sutured manifolds.}\label{subsec:HDsurfaceexterior} Suppose $(G,\gamma)$ and $(P,\rho)$ are as in the previous subsection. $(P,\rho)$ admits a Heegaard diagram with underlying surface, $\Sigma_P$, the base surface for $(P,\rho)$ and no $\alpha$ or $\beta$-curves. We call $\Sigma_P$ the \emph{middle piece}. Note that $\Sigma_P$ has $|\gamma|+1$ boundary components. $(G,\gamma)$ admits some Heegaard diagram $(\Sigma_G,\bm{\alpha_G},\bm{\beta_G})$. We call $\Sigma_G$ the \emph{top piece}.  In particular, $\Sigma_G$ has $|\gamma|$ boundary components. Set ${\Sigma_{G\cup P}:=\Sigma_G\cup\Sigma_P}$. A sutured Heegaard diagram $\mathcal{H}(-S):=(\Sigma_{G\cup P},\bm{\beta_G},\bm{\alpha_G})$ for $(-Y(S),-\gamma(S))$ can be constructed by gluing $\Sigma_P$ to $\Sigma_G$ along $|\gamma|$ boundary components. Set $g:=g(R_\pm(\gamma_0))$; this will end up being the genus of a Seifert surface for the knots whose Heegaard diagrams we construct in Section~\ref{subsec:HFforknots}. Notice that $\Sigma_{G\cup P}$ has genus $g+k$. We also write $\bm{\alpha_G}=\{\alpha_i\}_{i=2g+1}^{2g+k}$ and $\bm{\beta_G}=\{\beta_i\}_{i=2g+1}^{2g+k}$.

\subsection{Heegaard diagrams for $3$-manifolds.}\label{subsec:HDformanifolds} We will be interested in knots in $3$-manifolds admitting Heegaard diagrams of a special form we shall now describe. 

Let $\mathcal{H}(-S)=(\Sigma_{G\cup P},\bm{\beta_G},\bm{\alpha_G})$ be a sutured Heegaard diagram as in Section~\ref{subsec:HDsurfaceexterior}.   Let $\Sigma_S$ be a surface with a single boundary component and genus $g$. We will call $\Sigma_S$ the \emph{bottom piece} of the Heegaard diagram. Set $\Sigma:=\Sigma_S \cup \Sigma_{G\cup P}$. 

Pick a set of arcs
$\bm{{\overline{a}}}:=\{\overline{{a_i}}\}_{i=1}^{2g}$ in $\Sigma_{S}$ that are pairwise disjoint and form a basis for $H_1(\Sigma_{S},\partial\Sigma_{S})$. Let $\bm{\overline{{b}}}:=\{\overline{{b_i}}\}_{i=1}^{2g}$ be arcs in $\Sigma_{S}$ where $b_i$ is a push-off of $a_i$ that, near the boundary, is in the direction induced by the orientation of $\Sigma_S$ for all $i$ on $\Sigma_S$. In particular $a_i$ and $b_i$ intersect transversely at one point for each $i$. See Figure~\ref{Fig: standard HD with multi basepoints} for an example.

Pick a set of arcs $\bm{a}:=\{a_i\}_{i=1}^{2g}$ in $\Sigma_{G\cup P}$ with ${\partial a_i=\partial \overline{a_i}}$ such that the arcs $a_i$ and $\alpha$-curves in $\bm{\alpha_G}$ are pairwise disjoint and together form a basis of $H_1(\Sigma_{G\cup P},\partial\Sigma_{G\cup P})$. 
Pick a self-indexing Morse function $f_G$ on $(G,\gamma)$. There is a self-indexing Morse function, $f_{G\cup P}$, on $(Y,\gamma_0)$ obtained by patching $f_G$ and $f_P$ together.
Let $h_-$ be the map defined on all but a finite number of points in $R_{-}(\gamma_0)$ that sends each point $x\in R_-(\gamma_0)$ to its images in $\Sigma_{G\cup P}$ under the gradient flow of $f_{G\cup P}$. Let $h_+$ be the map defined on all but a finite number of points in $\Sigma_{G\cup P}$ that sends each point $x\in \Sigma_{G\cup P}$ to its images in $R_+(\gamma_0)$ under the gradient flow of $f_{G\cup P}$. After a small perturbation of $\psi$ on the interior of $R_{\pm}(\gamma_0)$ if necessary, set $\bm{b}:=\{{b_i}\}_{i=1}^{2g}$ in $\Sigma_{G\cup P}$ where ${b_i}=h_-\circ\psi\circ h_+({a_i})$. We further assume all the arcs in $\bm{a}$, $\bm{b}$,  $\bm{\alpha_G}$, and $\bm{\beta_G}$ are transverse to each other.

Now we can form a Heegaard diagram $\mathcal{H}(-M):=(\Sigma,\bm{\beta_S}\cup\bm{\beta_G},\bm{\alpha_S}\cup\bm{\alpha_G})$, where ${\bm{\alpha_S}=\{{\alpha_i}\}_{i=1}^{2g}}$ with ${\alpha_i=a_i\cup\overline{a_i}}$ for $1\leq i\leq 2g$ and 
$\bm{\beta_S}=\{{\beta_i}\}_{i=1}^{2g}$ with $\beta_i=b_i\cup\overline{b_i}$ for $1\leq i\leq 2g$. 
This Heegaard diagram describes some $3$-manifold $-M$ which depends on the choice of $\bm{a},\bm{b}, (G,\gamma), (P,\rho)$, and $\overline{\bm{a}}$.

Notice that, in $\Sigma_S$, each $\alpha_i$ curve intersects a unique $\beta$-curve --- namely $\beta_i$ --- and does so exactly once. In the case that $\Sigma_G=\emptyset$, the union of these intersection points would be Honda, Kazez and Mati\'c's reinterpretation of the contact class for an appropriate contact structure on $M$~\cite{honda2006contact}.

\subsection{Heegaard diagrams for knots.}\label{subsec:HFforknots}

Next we modify $\mathcal{H}(-M)$ to encode the knot, $-K_\psi$, which will be an isotopic copy of $-\partial\Sigma_S=-\partial\Sigma_{G\cup P}$ in $-M$. There are three ways in which we will do this.
\subsubsection{The first Heegaard diagram.}\label{subsubsec:H1knot}  $\mathcal{H}_1(-K_\psi)$ is obtained by adding $4g$ $z$ basepoints ${\bm{z}=\{z_1,z_2,...,z_{4g}\}}$, and $4g$ $w$ basepoints $\bm{w}=\{w_1,...,w_{4g}\}$ to $\mathcal{H}(-M)$, as well as $4g-1$ more $\alpha$-curves $\{\widehat{\alpha}_1,...,\widehat{\alpha}_{4g-1}\}$ and $4g-1$ more $\beta$-curves $\{\widehat{\beta}_1,...,\widehat{\beta}_{4g-1}\}$ to $\Sigma$ in a neighborhood of $\partial\Sigma_S$ in a manner which we shall now describe. We refer the reader to Figure~\ref{Fig: standard HD with multi basepoints} for a picture of a neighborhood of $\partial\Sigma_S$.

Consider $\partial \Sigma_S$. Travel around $\partial\Sigma_S$ in the direction opposite to that dictated by the orientation of $\Sigma_S$, starting at $z_1$ in the large region. Immediately upon passing the $i$th intersection between a $\beta$-curve and $\partial\Sigma_S$, place the $w_i$ basepoint. Immediately upon passing the $i$th $\alpha$-curve, place the $z_{i+1}$ basepoint.  Now add $\alpha$-curves $\widehat{\alpha}_i$ as the boundary of a small neighborhood of the sub-arc of $\partial\Sigma_S$ connecting $z_{i+1}$ to $w_{i+1}$ that goes against the direction dictated by the orientation of $\Sigma_S$ for $i\neq 1$. Also add $\beta$-curves $\widehat{\beta}_i$ as the boundary of a small neighborhood of the sub-arc of $\partial\Sigma_S$ connecting $w_i$ and $z_{i+1}$ that goes against the direction dictated by the orientation of $\Sigma_S$. Set $\widehat{\bm{\alpha}}:=\{\widehat{\alpha}_i\}_{i=1}^{4g-1}$ $\widehat{\bm{\beta}}:=\{\widehat{\beta}_i\}_{i=1}^{4g-1}$, $\bm{z}:=\{z_i\}_{i=1}^{4g}$, and $\bm{w}=\{w_i\}_{i=1}^{4g}$. We then set $\mathcal{H}_1(-K_\psi):=(\Sigma, \bm{\beta_S}\cup\bm{\beta_G}\cup \widehat{\bm{\beta}}, \bm{\alpha_S} \cup  \bm{\alpha_G} \cup \widehat{\bm{\alpha}},\bm{w},\bm{z})$.

\begin{figure}[htb!]
\centering
\begin{tikzpicture}
\begin{scope}[thin, black!0!white]
          \draw  (-5, 0) -- (5, 0);
      \end{scope}
        \node at (0,0){ \includegraphics[width=0.5\textwidth]{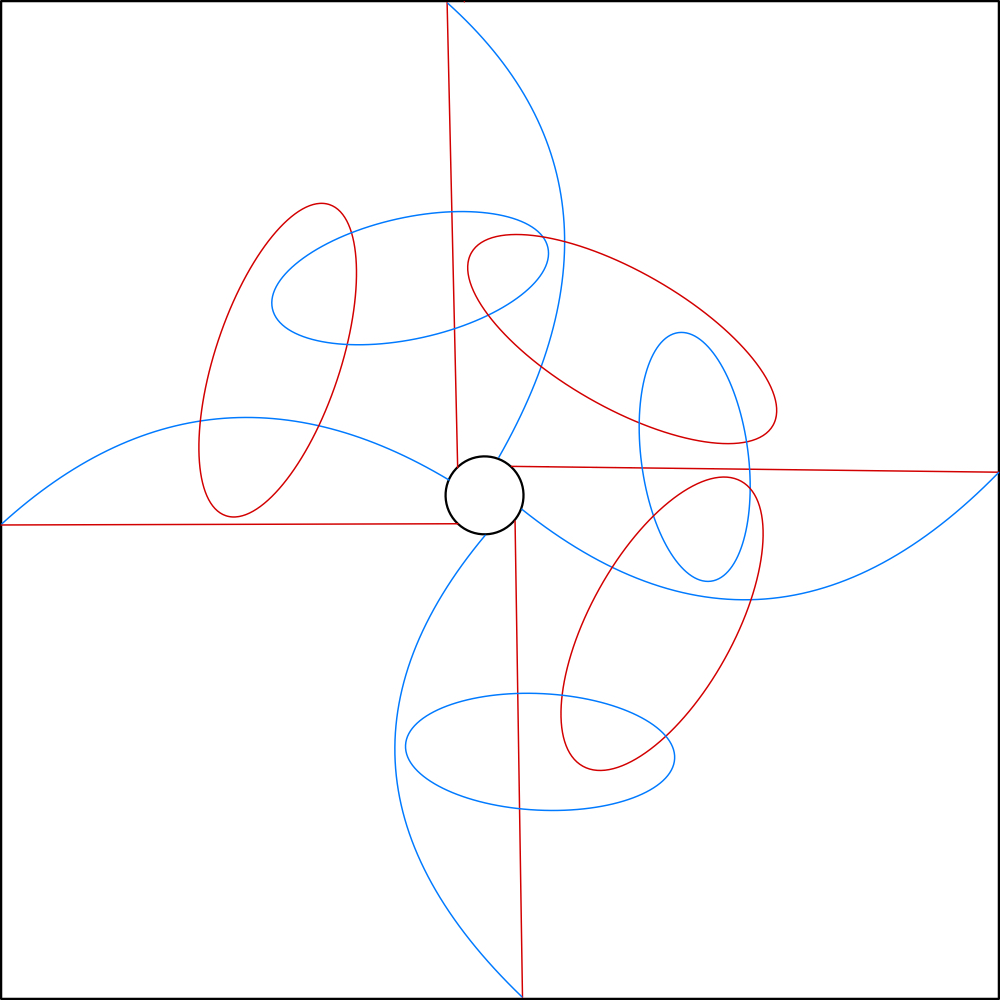}};
        \filldraw[black] (0.19,-4.1) circle (2pt);
        \node at (0.7,-3.9)  {$c_1$};
        \filldraw[black] (4.1,0.19) circle (2pt);
        \node at (3.9,0.7)  {$c_2$};
        \filldraw[black] (1.35,-1.95) circle (2pt);
        \node at (1.7,-2.4)  {$\widehat{c}_1$};
        \filldraw[black] (2.02,0.48) circle (2pt);
        \node at (2.5,0.45)  {$\widehat{c}_2$};
        \draw[dotted] (-0.2,-0.1) circle (2); 
        
        \filldraw[black] (-1.24,2.19) circle (2pt);
        \node at (-1.1,2.6)  {$\widehat{c}_3$};
        \node at (0.5,-3.3){\textcolor{red}{$\alpha_1$}};
        \node at (-1,-3.3){\textcolor{blue}{$\beta_1$}};
        \node at (-0.2,-2.1){{$w_1$}};
        \node at (-0.3,-1.3){\textcolor{blue}{$\widehat{\beta}_1$}};
        \node at (0.9,-2){\textcolor{black}{$z_2$}};
        \node at (2,-1.7){\textcolor{red}{$\widehat{\alpha}_1$}};
        \node at (3,-1){\textcolor{blue}{${\beta}_2$}};
        \node at (3,-0.2){\textcolor{red}{${\alpha}_2$}};
        \node at (1.5,0.9){\textcolor{black}{${z}_3$}};
        \node at (1.6,2){\textcolor{red}{$\widehat{\alpha}_2$}};
        \node at (0.05,1.9){\textcolor{black}{$w_3$}};
        \node at (-1.5,1.7){\textcolor{black}{$z_4$}}; 
         \node at (-0.9,0.9){\textcolor{blue}{$\widehat{\beta}_3$}};
         \node at (-2.6,1.3){\textcolor{red}{$\widehat{\alpha}_3$}};
         \node at (-2.1,0.4){$w_4$};
        \node at (1.7,-0.2){\textcolor{black}{$w_2$}};
        \node at (0.9,-0.1){\textcolor{blue}{$\widehat{\beta}_2$}};
         \node at (-2,-2){ $z_1$};

\end{tikzpicture}
    \caption{A neighborhood of $\Sigma_S$ in the Heegaard diagram $\mathcal{H}_1(K)$, for $K$ a genus one knot. The dotted circle is the boundary of $\Sigma_S$. Note that the right and left hand sides of the diagram are identified, as are the top and bottom sides. The $c_i$ and $\widehat{c}_i$ intersection points are the intersection points we consider in Section \ref{sec:minimalgrading}.} \label{Fig: standard HD with multi basepoints}
\end{figure}

\subsubsection{The second Heegaard diagram.} $\mathcal{H}_2(-K_\psi)$, is obtained by modifying $\mathcal{H}_1(-K_\psi)$ as follows. Perform $2g(4g-1)$ handle slides of all of the $\bm{\beta_S}$-curves over $\widehat{\bm{\beta}}$-curves in a neighborhood of the arc from $w_{4g}$ to $z_2$ in the direction induced by the orientation of $\Sigma_S$, so that after performing this operation the only $\beta$-curves that intersect $\partial\Sigma_S$ in the arc from $z_1$ to $w_1$ in the direction induced by the orientation of $\Sigma_S$ are curves in $\widehat{\bm{\beta}}$. In a minor abuse of notation we will continue to let $\alpha_i,\widehat{\alpha}_i,\beta_i$, and $\widehat{\beta}_i$ denote the images of the curves $\alpha_i,\widehat{\alpha}_i,\beta_i,$ and $\widehat{\beta}_i$ under these handle-slides.

\subsubsection{The third Heegaard diagram.}\label{subsubsec:HFhat} $\widehat{\mathcal{H}}(-K)$ is the doubly pointed Heegaard diagram defined as follows. Start with $\mathcal{H}_2(-K_\psi)$ and then remove every basepoint other than $z_1$ and $w_{2g}$ along with every $\widehat{\alpha}$ and $\widehat{\beta}$-curve. Observe that in the special case that $(G,\gamma)=\emptyset$, this Heegaard diagram is exactly the type of Heegaard diagram for a fibered knot considered in~\cite[Section 2.4]{baldwin_note_2018}. Observe, as in the case for $\mathcal{H}(-M)$ discussed in Section~\ref{subsec:HDformanifolds}, that $\widehat{\mathcal{H}}(-K)$ is determined by the following data:\begin{itemize}
    \item $(\Sigma_G,\bm{\alpha_G},\bm{\beta_G})$, a Heegaard diagram for the $(G,\gamma)$, 
 \item A collection of properly embedded arcs $\bm{a}$ in $ \Sigma_{G\cup P}$, such that $\bm{a}\cup \bm{\alpha_G}$ are linearly independent in  $H_2(\Sigma_{G\cup P},\partial(\Sigma_{G\cup P}))$.
 \item    a collection of properly embedded arcs $\bm{b}$ in $\Sigma_{G\cup P}\setminus \bm{\beta_G}$ whose endpoints coincide with those of a push-off of $a_i$ such that $\bm{b}\cup\bm{\beta_G}$ is a basis for $H_2(\Sigma_{G\cup P},\partial\Sigma_S)$.
 
 \end{itemize}

 When we wish to emphasize the dependence of $\widehat{\mathcal{H}}(-K)$ on this data we write $${\widehat{\mathcal{H}}(-K)=\widehat{\mathcal{H}}(\Sigma_G,\bm{\beta_G},\bm{\alpha_G},\bm{{b}},\bm{{a}},\overline{\bm{a}}}). $$ 
 
 Note that the data $\bm{{b}}$ is equivalent to the data $\mathfrak{i}\circ\psi$. Given a map $R_+(\gamma)\to R_-(\gamma)$, one can readily construct $\bm{b}$ from $\bm{a}$ by considering the flow of the image of the flow of the arcs $\bm{a}$. Conversely, given $\bm{b}$ and $\bm{a}$, (and any $\mathfrak{i}:R_+(\gamma)\to R_-(\gamma)$) one can construct a diffeomorphism $\psi:R_+(\gamma)\to R_+(\gamma)$ by defining it so that $\mathfrak{i}\circ\psi$ sends the images of $\bm{a}$ under the gradient flow, to curves which flow to $\bm{b}$. $\psi$ can be extended over $R_+(\gamma)$ using Alexander's theorem.

\begin{figure}[htb!]
\centering
\begin{tikzpicture}
\begin{scope}[thin, black!0!white]
          \draw  (-5, 0) -- (5, 0);
      \end{scope}
        \node at (0,0){ \includegraphics[width=0.5\textwidth]{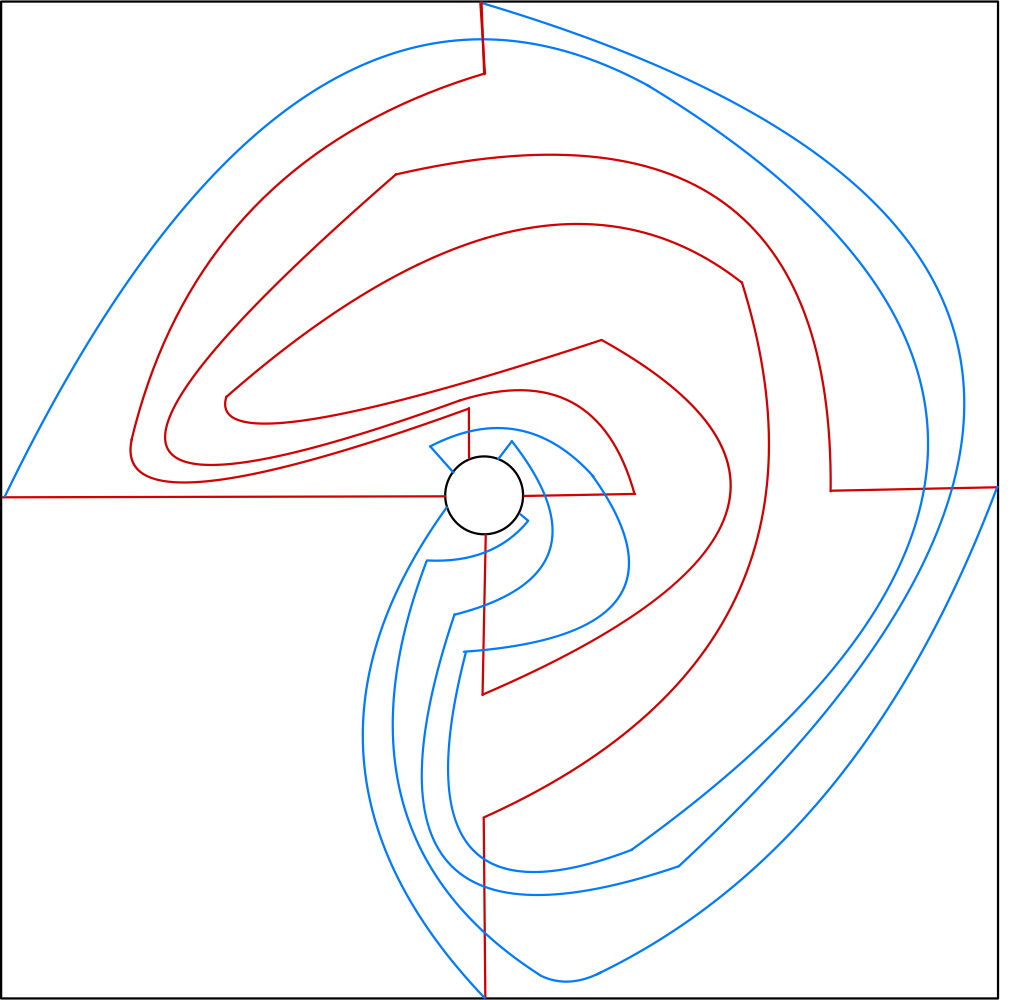}};
         \node at (-2,-2){ $z_1$};
          \node at (-1.2,1.2){ $w_4$};
\end{tikzpicture}
     \caption{The $\Sigma_S$ portion of $\widehat{\mathcal{H}}(-K)$ in the case that $K$ is of genus one. The left and right hand sides of the figure should be identified, as should the top and bottom.} \label{Fig: standard HD with two basepoints}
\end{figure}

\subsubsection{Existence}The following result is well known;
\begin{lemma}
    Any null-homologous knot $K$ in any $3$-manifold admits a Heegaard diagram as constructed above with $g(\Sigma_S)$ equal to the Seifert genus of $K$.
\end{lemma}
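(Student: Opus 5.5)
We take a minimal genus Seifert surface $S$ for $K$, with $g(S)=g(K)=g$, and build the diagram in the shape of Sections~\ref{subsec:knotcomplementstosuturedmanifolds}--\ref{subsec:HFforknots}, using the trivial choices $(G,\gamma)=\emptyset$ and $(P,\rho)$ a product. The sutured exterior $(Y(S),\gamma(S))$ is a balanced sutured manifold with $R_\pm(\gamma(S))$ two copies of $S$. By Juh\'asz, it admits a sutured Heegaard diagram, which we will take to be the diagram $(\Sigma_{G\cup P},\bm{\beta_G},\bm{\alpha_G})$ of Section~\ref{subsec:HDsurfaceexterior}. The clean way to arrange this is to set $P=\emptyset$ and $G=Y(S)$; the gluing formalism with $|\rho|=|\gamma|+1$ is only needed later, for the decompositions along product annuli. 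Alternatively, we can take $P$ to be a collar $S\times[0,\epsilon]$ of $R_-$. Either way, a self-indexing Morse function on $Y(S)$ with $R_-$ as minimum level and $R_+$ as maximum level gives the diagram: $\Sigma_{G\cup P}$ is a middle level set, $\bm{\alpha_G}$ are the ascending spheres of the index one points, and $\bm{\beta_G}$ are the descending spheres of the index two points. Its genus is $g+k$, and its single boundary component is the suture.

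Next we recover the knot exterior $Y\setminus\nu(K)$ as $Y(S)/(\mathfrak{i}\circ\psi)$. Gluing $R_+$ to $R_-$ via the identification coming from $S\times[-1,1]$ does exactly this, and by construction $\psi$ fixes the boundary. Gradient flow gives the identification $\mathfrak{i}$ together with the monodromy, and the meridians are the flow lines through points of $\partial S$. Closing up, we then glue in the bottom piece $\Sigma_S$, a genus $g$ surface with one boundary component. We choose arcs $\overline{\bm{a}}$ forming a basis of $H_1(\Sigma_S,\partial\Sigma_S)$ and their push-offs $\overline{\bm{b}}$. We then choose arcs $\bm{a}$ in $\Sigma_{G\cup P}$ with matching endpoints, disjoint from $\bm{\alpha_G}$, such that together with $\bm{\alpha_G}$ they form a basis of $H_1(\Sigma_{G\cup P},\partial)$. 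Such arcs exist because, after compressing $\Sigma_{G\cup P}$ along $\bm{\alpha_G}$, we obtain a surface diffeomorphic to $R_+\cong S$, which has a basis of $2g$ arcs; these lift to $\Sigma_{G\cup P}$ avoiding the $\alpha$ disks. Finally, we set $b_i=h_-\circ\psi\circ h_+(a_i)$, perturbing $\psi$ to get transversality.

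The key verification is that the closed diagram $\mathcal{H}(-M)$, with the two basepoints of $\widehat{\mathcal{H}}(-K)$, represents $(Y,K)$ and not some other manifold. We argue as Honda--Kazez--Mati\'c do in the fibered case. The $\alpha_S$ and $\beta_S$ curves bound disks in the two handlebodies obtained by thickening. Gluing $\Sigma_S\times I$ along these disks reconstructs the product $S\times[-1,1]$, with $\alpha_i$ bounding $a_i\times\{\cdot\}$ sweeping and $\beta_i$ bounding the image under the monodromy. Attaching the $G$ handles then reproduces $Y(S)$ glued via $\mathfrak{i}\circ\psi$, that is, the exterior of $K$. The remaining boundary torus is then filled so that $\partial\Sigma_S$ becomes the knot, with $z_1$ and $w$ on either side of it. This identification, and the bookkeeping of orientations, is the main obstacle: we must track why reversing the roles of $\alpha$ and $\beta$ gives $-M$ and $-K$, and why the choice of $\mathfrak{i}$ only changes $\psi$. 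For these points we will cite the analogous statements in \cite{honda2006contact} and \cite[Section 2.4]{baldwin_note_2018}, noting that the $G$ handles are attached away from $\Sigma_S$ and so do not interfere.

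Finally, $g(\Sigma_S)=g(S)=g(K)$ holds by the choice of a minimal genus $S$. The multipointed diagrams $\mathcal{H}_1$ and $\mathcal{H}_2$ then follow by the explicit local modifications already described. Admissibility is not part of this lemma; it is handled in Section~\ref{subsec:admissibility}.
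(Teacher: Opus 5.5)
Your proposal is correct and is essentially the paper's argument written out in detail: the paper just cites Lekili's construction of such diagrams from a circle-valued Morse function, and your self-indexing Morse function on $Y(S)$ relative to $R_\pm$ is exactly such a function cut open along the regular fiber $S$. The only slip is your opening phrase ``$(G,\gamma)=\emptyset$'', which contradicts your actual choice $G=Y(S)$ and would only cover fibered knots; also, to make $|\rho|=|\gamma|+1$ hold literally, take $P$ to be an annulus times $[0,1]$ glued along one suture, rather than $\emptyset$ or a collar of $R_-$.
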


\begin{proof} Such a diagram can be constructed using, say, a circle valued Morse function as in~\cite{lekili2013heegaard}.
\end{proof}

\subsection{Admissibility}\label{subsec:admissibility}

We need to ensure that the Heegaard diagrams $\widehat{\mathcal{H}}(-K_\psi)$, $\mathcal{H}_1(-K_\psi)$, and $\mathcal{H}_2(-K_\psi)$ can be made admissible.

\begin{lemma}
    If $(\Sigma_G,\bm{\alpha_G},\bm{\beta_G})$ is admissible then so too are $\widehat{\mathcal{H}}(-K_\psi)$, $\mathcal{H}_1(-K_\psi)$, and $\mathcal{H}_2(-K_\psi)$.
\end{lemma}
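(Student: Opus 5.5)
The plan is to take an arbitrary periodic domain $\mathcal{P}$ in one of the three diagrams and show that it has both positive and negative coefficients unless it is zero. Here ``periodic'' means it has multiplicity zero at every basepoint, and in the sutured case also on the boundary. The argument works region by region. It first uses the bottom piece $\Sigma_S$ and the basepoints near $\partial\Sigma_S$ to force $\mathcal{P}$ to vanish on $\Sigma_S$. It then shows $\mathcal{P}$ is supported in $\Sigma_G$, where the admissibility of $(\Sigma_G,\bm{\alpha_G},\bm{\beta_G})$ applies.

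\emph{Step 1: $\Sigma_S$.} In $\widehat{\mathcal{H}}(-K_\psi)$ the complement of $\bm{\overline{a}}\cup\bm{\overline{b}}$ in $\Sigma_S$ is a union of small bigon-like regions near the intersection points $a_i\cap b_i$ together with one large region containing $z_1$. This uses that the $\overline{b_i}$ are small push-offs of a cut system of arcs. Since $n_{z_1}(\mathcal{P})=0$, the large region has multiplicity $0$. Crossing a curve $\alpha_i$ or $\beta_i$ changes the multiplicity by that curve's coefficient in $\partial\mathcal{P}$. Each thin strip between $\overline{a_i}$ and $\overline{b_i}$ touches the big region across both arcs, so these constraints should force the multiplicities of all $\alpha_i,\beta_i$ ($1\le i\le 2g$) in $\partial\mathcal{P}$ to vanish. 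Hence $\mathcal{P}$ is zero on $\Sigma_S$ and the $\bm{\alpha_S},\bm{\beta_S}$ curves do not appear in $\partial\mathcal{P}$.

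For $\mathcal{H}_1$ and $\mathcal{H}_2$ the same argument goes through with the extra basepoints. Every region adjacent to $\partial\Sigma_S$ contains some $z_i$ or $w_i$; this is how the basepoints were placed, one immediately after each curve crossing. The $\widehat{\alpha}_i$ and $\widehat{\beta}_i$ bound small disks each containing basepoints. So their coefficients are also forced to be $0$, working inductively around $\partial\Sigma_S$. The handleslides in $\mathcal{H}_2$ only change $\bm{\beta_S}$ by adding $\widehat{\bm{\beta}}$ curves, so the lattice of periodic domains is unchanged up to this reparametrization.

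\emph{Step 2: $\Sigma_{G\cup P}$.} Once the $\alpha_i,\beta_i$ with $i\le 2g$ have zero coefficients, $\partial\mathcal{P}$ is a combination of $\bm{\alpha_G}\cup\bm{\beta_G}$ alone. These curves lie in $\Sigma_G$. The multiplicity of $\mathcal{P}$ is therefore locally constant on $\Sigma_P$ and on the part of $\Sigma_S$ adjacent to it. It equals $0$ there, because $\Sigma_P$ is connected to the $z_1$ region across the removed curves. Thus $\mathcal{P}$ is supported in $\Sigma_G$. It has zero multiplicity near the boundary components where $\Sigma_G$ is glued to $\Sigma_P$, so it is a periodic domain of the sutured diagram $(\Sigma_G,\bm{\alpha_G},\bm{\beta_G})$. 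By hypothesis it then has both positive and negative coefficients.

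The main obstacle is the combinatorial claim in Step 1, that the region structure of $\Sigma_S\setminus(\bm{\overline a}\cup\bm{\overline b})$ forces all coefficients to vanish. I would first prove it for the diagram of Figure~\ref{Fig: standard HD with two basepoints} by tracking multiplicities across each pair $\overline{a_i},\overline{b_i}$ starting from the $z_1$ region. If that fails because the cut system does not produce a single large region, I would instead argue homologically. The arcs $\bm{\overline a}$ form a basis of $H_1(\Sigma_S,\partial\Sigma_S)$, so a nonzero combination of $\alpha_i,\beta_i$ cannot bound a domain avoiding $z_1$ without being nonzero on the boundary component $\partial\Sigma_S$.
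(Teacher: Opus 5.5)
Your Step 1 is false, and everything after it depends on it. Vanishing of $\mathcal P$ at $z_1$ does not force the coefficients of $\alpha_i,\beta_i$ ($i\le 2g$) in $\partial\mathcal P$ to be zero.

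The thin regions between $\overline{a_i}$ and $\overline{b_i}$ run across $\partial\Sigma_S$ into $\Sigma_{G\cup P}$. Since $\partial\Sigma_S$ is not a curve of the diagram, their multiplicities are a priori unconstrained. Adjacency to the $z_1$-region across both arcs only gives two facts: the coefficients of $\alpha_i$ and $\beta_i$ agree up to sign, and the two thin quadrants at $c_i$ carry multiplicities $m_i$ and $-m_i$.

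A global argument shows that vanishing genuinely fails. Steps 1 and 2 together would make every periodic domain supported in $\Sigma_G$. But periodic domains correspond to $H_2$ of the ambient manifold. Take $G=\emptyset$ and $\psi=\mathrm{id}$, which gives the binding of the trivial open book in $\#^{2g}S^1\times S^2$. There are then $2g$ independent periodic domains, and each must have a nonzero $\bm{\alpha_S}$ or $\bm{\beta_S}$ coefficient. Your homological fallback fails on the same example: $\partial\Sigma_S$ is an interior curve of $\Sigma$, so a domain meeting it simply continues into $\Sigma_P$, and no contradiction arises.

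What you need is a dichotomy, not vanishing. This is exactly the content of the paper's one-line proof, which appeals to the positioning of the basepoints near the $c_i$ and $\widehat c_i$. At each $c_i$, two diagonally opposite quadrants lie in the region containing $z_1$. So the other two have multiplicities $m_i$ and $-m_i$, where $|m_i|$ equals the coefficient of both $\alpha_i$ and $\beta_i$ in $\partial\mathcal P$.
\begin{itemize}
\item If some $m_i\neq 0$, then $\mathcal P$ already has coefficients of both signs and you are done.
\item If every $m_i=0$, then all $\bm{\alpha_S},\bm{\beta_S}$ coefficients vanish, and your remaining arguments are sound. In $\mathcal H_1$ and $\mathcal H_2$, the chain of small disks along $\partial\Sigma_S$, each containing basepoints, forces the $\widehat{\alpha}_i,\widehat{\beta}_i$ coefficients to vanish one at a time. (Alternatively, use the same quadrant argument at the $\widehat c_i$.) Your Step 2 then reduces to admissibility of $(\Sigma_G,\bm{\alpha_G},\bm{\beta_G})$.
\end{itemize}

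Your remark that the slides producing $\mathcal H_2$ preserve the lattice of periodic domains would not transfer admissibility by itself. Positivity of periodic domains is not invariant under handleslides. So $\mathcal H_2$ must be checked directly, and the local argument at the $c_i$ does this.
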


\begin{proof}
    This follows immediately from the positioning of the basepoints in $\Sigma_S$ near the $c_i$ and $\widehat{c}_i$ intersection points.
\end{proof}

Since $(G,\gamma)$ always admits an admissible Heegaard diagram it follows that we may always assume that the Heegaard diagrams $\widehat{\mathcal{H}}(-K_\psi)$, $\mathcal{H}_1(-K_\psi)$, and $\mathcal{H}_2(-K_\psi)$ are admissible. We duly make this assumption henceforth.
\subsection{Alexander Gradings}\label{subsec:gradings}
Let $A_\text{min}$ be the minimum Alexander grading in which $${\widetilde{{\HFK}}(\mathcal{H}_1(-K_\psi)\cong\widetilde{{\HFK}}(-Y,-K_\psi,[-S]))}$$ is non-trivial. It follows, say, from work of~\cite{juhasz2008floer}, that for $K$ non-trivial, $A_\text{min}=\frac{1}{2}-3g(K)$. For the Heegaard diagrams discussed in Section~\ref{subsec:HFforknots}, it is easy to read off the Alexander grading of any generator $\bm{x}\in\T_{\bm{\alpha}}\cap\T_{\bm{\beta}}$ relative to $[-S]$ via the following formula:
\begin{equation}\label{eq:Alexandergrading}
    A_{[-S]}=A_\text{min}+\frac{1}{2}\cdot|\{x\in\bm{x}:x\not\in\Sigma_S\}|.
\end{equation}

\noindent This follows, for example, from an argument given in the proof of~\cite[Lemma 2.1]{MR3071144}.

\section{Generators of minimal Alexander grading}\label{sec:minimalgrading}

In this section we describe certain elements in the knot Floer chain complex computed from the diagrams we constructed in the previous subsection. The choices of elements we consider are inspired by the LOSS invariant~\cite{lisca2009heegaard}, the braid invariant~\cite{baldwin2013equivalence}, and Honda, Kazez, and Mati\'c's reinterpretations~\cite{honda2006contact} of the Ozsv\'ath and Szab\'o's contact invariant~\cite{ozsvath2005heegaard}. We continue to work in the setting of Section~\ref{sec:Heegaarddiagrams}. The diagram we ultimately want to consider is  $\widehat{\mathcal{H}}(-K_\psi)$, which is compatible with $(-M,-K_\psi)$. 

$\mathcal{H}(-M)$ --- the Heegaard diagram constructed in Section~\ref{subsec:HDformanifolds} --- contains unique intersection points $\alpha_i\cap\beta_i\subset \Sigma_S$, which we denote by $c_i$. These intersection points persist under the operations used to obtain $\mathcal{H}_1(-K_\psi)$, $\mathcal{H}_2(-K_\psi)$, and $\widehat{\mathcal{H}}(-K)$ where, by a minor abuse of notation, we still denote them by $c_i$. Note, moreover, that in $\mathcal{H}_1(-K_\psi)$, $\widehat{\alpha}_i \cap \widehat{{\beta}}_i\cap\Sigma_S$ consists of a single intersection point  for $1\leq i \leq 4g-1$, which we denote by $\widehat{c}_i$. Abusing notation again, set $\bm{c}$ to be $\{c_i\}_{1\leq i\leq 2g}$ for $\widehat{\mathcal{H}}(-K_\psi)$, and $\{c_i\}_{1\leq i\leq 2g}\cup\{\widehat{c}_i\}_{1\leq i\leq 4g-1}$ for $\mathcal{H}_1(-K_\psi)$.

We define functions $$\widetilde{f}_\psi:\CF(\mathcal{H}(-S))\to \widetilde{\CFK}(\mathcal{H}_1(-K_\psi))$$ and $$f_\psi:\CF(\mathcal{H}(-S))\to \widehat{\CFK}(\widehat{\mathcal{H}}(-K_\psi)).$$  Each map is defined on basis elements of $\mathcal{H}(-S)$, that is elements of $\T_{\bm{\alpha_G}}\cap\T_{\bm{\beta_G}}$, via $\bm{e}\mapsto \bm{e}\cup\bm{c}$ and extending linearly. Here we are again abusing notation by allowing $\bm{e}$ to denote both a collection of intersection points in  $\mathcal{H}(-S)$ and a collection of intersection points in either $\mathcal{H}_1(-K_\psi)$ or $\widehat{\mathcal{H}}(-K_\psi)$. It follows from Equation~\ref{eq:Alexandergrading}, that these maps can be viewed as maps $$\widetilde{f}_\psi:\CF(\mathcal{H}(-S))\to\widetilde{\CFK}(\mathcal{H}_1(-K_\psi),A_{\text{min}}),$$

and $${f}_\psi:\CF(\mathcal{H}(-S))\to\widehat{\CFK}(\widehat{\mathcal{H}}(-K_\psi),A_{\text{min}}).$$

\begin{definition} \label{def: H(sigma,alphaG,betaG,a)}

    We say that $\bm{x}\in \widehat{\CFK}(\widehat{\mathcal{H}}(-K_\psi),A_{\text{min}})$ (or $\widetilde{\CFK}(\mathcal{H}_1(-K_\psi),A_{\text{min}})$) is a \emph{contact class like element} if it is in the image of $f_\psi$ (or $\widetilde{f}_\psi$).
\end{definition}

 This definition is dependent on the data of $(\Sigma_G,\bm{\beta_G},\bm{\alpha_G},\bm{b},\bm{\alpha_S})$ which we have seen in section \ref{subsubsec:HFhat}. We will suppress this dependence when these choices are apparent from the context. Indeed, in Section~\ref{subsec:independence} we show that being generated by contact class like elements is in fact independent of all of this auxiliary data.
 
 The primary goal of this section is to show that $f_\psi$ and $\widetilde{f}_\psi$ induce isomorphisms on homology when $K_\psi$ has $w$-avoiding exterior, or, equivalently, that $\widehat{\HFK}(\widehat{\mathcal{H}}(-K_\psi),A_\text{min})$ is generated by contact class like generators.

\begin{lemma}\label{lem:chain}
    The maps $$\tilde{f}_{\psi}:\CF(\mathcal{H}(-S))\to\widetilde{\CFK}(\mathcal{H}_1(-K_{\psi}),A_\text{min})$$ and $${{f}_{\psi}:\CF(\mathcal{H}(-S))\to\widehat{\CFK}(\widehat{\mathcal{H}}(-K_{\psi}),A_\text{min})}$$ are chain maps for any diffeomorphism $\psi$.
\end{lemma}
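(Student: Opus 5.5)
We need $\partial\circ f_\psi = f_\psi\circ\partial_{\CF}$. Work in the target complex, restricted to Alexander grading $A_{\min}$. By Equation~\eqref{eq:Alexandergrading}, a generator lies in $A_{\min}$ exactly when it has the maximal possible number of coordinates in $\Sigma_S$. Counting curves shows that the generators in grading $A_{\min}$ are precisely those of the form $\bm{e}\cup\bm{c}$ with $\bm{e}\in\T_{\bm{\alpha_G}}\cap\T_{\bm{\beta_G}}$. Indeed, the only $\alpha\cap\beta$ intersections inside $\Sigma_S$ are the $c_i$ and, for $\mathcal{H}_1$, the $\widehat c_i$. The curves $\alpha_i,\beta_i$ ($i\le 2g$) and $\widehat\alpha_i,\widehat\beta_i$ each contribute exactly one such point. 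Hence $f_\psi$ and $\widetilde f_\psi$ are bijections on generators onto the grading-$A_{\min}$ summand, and the task reduces to matching differentials.

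The key step is a domain analysis. Let $\phi\in\pi_2(\bm{e}\cup\bm{c},\bm{e}'\cup\bm{c})$ be a Maslov index one class with a holomorphic representative and $n_{\bm z}(\phi)=n_{\bm w}(\phi)=0$ (or $n_{z_1}=n_{w_{2g}}=0$ for $\widehat{\mathcal H}$). I will argue that its domain $D(\phi)$ has multiplicity zero throughout $\Sigma_S$.

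The basepoints sit in the regions of $\Sigma_S$ adjacent to the $c_i$, as in Figure~\ref{Fig: standard HD with multi basepoints}. For $\mathcal{H}_1$ every region touching $\partial\Sigma_S$ contains a basepoint. The complement of $\bm{\alpha_S}\cup\bm{\beta_S}$ in $\Sigma_S$ is built from the standard picture in which $\overline{a_i}$ and $\overline{b_i}$ cross once, so every region meets $\partial\Sigma_S$ or a corner $c_i$. At each $c_i$ the domain has corner data forced to be trivial, because $c_i$ is both an initial and a terminal coordinate. The local multiplicity relation at $c_i$ (the sum of opposite quadrants agrees) then propagates zero multiplicity from the basepoint regions to all of $\Sigma_S$. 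This is the same argument used for the contact class in \cite{honda2006contact} and for the fibered case in \cite{baldwin_note_2018}.

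For $\widehat{\mathcal H}$ only $z_1$ and $w_{2g}$ remain, after the handleslides of $\mathcal H_2$. I expect this to be the main obstacle. The plan is to follow \cite[Section 2.4]{baldwin_note_2018}, which uses that all small regions adjacent to $\partial\Sigma_S$ are connected to the $z_1$ region across the arc from $z_1$ to $w_1$, where only $\widehat\beta$-curves met $\partial\Sigma_S$ before they were removed. Positivity of domains at the corners $c_i$ then forces the multiplicities in $\Sigma_S$ to vanish.

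Once $D(\phi)$ is supported in $\Sigma_{G\cup P}$, it has multiplicity zero along $\partial\Sigma_S$, which is the boundary of the sutured diagram $\mathcal{H}(-S)$. So $D(\phi)$ is the domain of a class in $\pi_2(\bm e,\bm e')$ for $\mathcal{H}(-S)$ with the same Maslov index. The constant components at the $c_i$ contribute nothing, since the index is computed via Lipshitz's formula and the point measures at the $c_i$ vanish. Conversely, every index one class for $\mathcal{H}(-S)$ extends by constants at $\bm c$.

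It remains to match the moduli space counts. For a generic almost complex structure that is split or cylindrical near $\partial\Sigma_S$, the holomorphic representatives correspond bijectively, with trivial components at the $c_i$; this is a standard neck-stretching or locality argument. Hence $\partial(\bm e\cup\bm c)=f_\psi(\partial\bm e)$, and both maps are chain maps. Note that the argument never uses any property of $\psi$, which only affects the diagram inside $\Sigma_{G\cup P}$; this is why the statement holds for any diffeomorphism $\psi$.
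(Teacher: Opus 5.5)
Your reduction rests on the claim that the only $\alpha\cap\beta$ points in $\Sigma_S$ are the $c_i$ and $\widehat c_i$. From this you conclude that the grading-$A_{\min}$ generators are exactly the $\bm e\cup\bm c$, and that $\widetilde f_\psi$, $f_\psi$ are bijections onto that summand. For $\mathcal H_1$ this claim is false. Each $\widehat\alpha_i$ (resp.\ $\widehat\beta_i$) bounds a small disk straddling $\partial\Sigma_S$ around an arc that contains a crossing of some $\beta_j$ (resp.\ $\alpha_j$), and consecutive disks overlap near the basepoints. So $\Sigma_S$ also contains points of $\widehat\alpha_i\cap\beta_j$, $\widehat\beta_i\cap\alpha_j$ and $\widehat\alpha_i\cap\widehat\beta_{i+1}$; these are the points $x_1,x_2$ in the proof of Lemma~\ref{lem:inj}. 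Generators using them also have Alexander grading $A_{\min}$. As a sanity check, if your enumeration were right, a chain map that is bijective on generators would be a chain isomorphism for every $\psi$. Lemma~\ref{lem:inj}, the neck-stretching of Proposition~\ref{prop:neck}, and the $w$-avoiding hypothesis would then all be superfluous. In fact the proof of Lemma~\ref{lem:inj} is precisely an analysis of index-one classes $\phi_1,\dots,\phi_4$ from such extra generators into $\widetilde f_{\tau^n}(\bm y)$. Your assertion that every region of $\mathcal H_1$ meeting $\partial\Sigma_S$ contains a basepoint also fails: small regions between consecutive loops, such as $B$, $C$, $D$ in the figures for Lemma~\ref{lem:inj}, contain none.

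Consequently, looking only at classes in $\pi_2(\bm e\cup\bm c,\bm e'\cup\bm c)$ leaves the essential case untreated. You must rule out index-one disks from $\bm e\cup\bm c$ to grading-$A_{\min}$ generators that omit some $c_i$ or $\widehat c_i$, since these would put terms of $\partial(\bm e\cup\bm c)$ outside the image of the map. Your sentence that the corner data at $c_i$ is trivial ``because $c_i$ is both an initial and a terminal coordinate'' assumes exactly what has to be proved.

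The missing ingredient is the directional corner argument, which is the whole content of the paper's proof (as in Honda--Kazez--Mati\'c's proof that the contact class is a cycle). At each $c_i$ (and $\widehat c_i$), the diagonally opposite pair of quadrants lying in basepoint regions has multiplicity zero. This is exactly the pair a positive domain would have to cover in order to have an outgoing corner at $c_i$. Hence every disk out of $\bm e\cup\bm c$ is constant at each $c_i$ and ends at some $\bm e'\cup\bm c$. Only after this step do your vanishing-on-$\Sigma_S$ argument and your identification with disks of $\mathcal H(-S)$ apply.

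Note the asymmetry: disks into $\bm e\cup\bm c$ that move the $c_i$ can exist, which is why one gets a chain map but not a chain isomorphism. In $\widehat{\mathcal H}$, $z_1$ occupies such a diagonal pair at every $c_i$ as well (cf.\ the proof of Lemma~\ref{lem:contactclasspreservingisotopies}). So the hat case is handled by the same local argument and is not the obstacle you anticipated.
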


Note that Lemma~\ref{lem:chain} holds without the hypothesis that $K_\psi$ has $w$-avoiding exterior.

\begin{proof}
   We prove the $\tilde{f}_{\psi}$ case. The proof in the $f_{\psi}$ case is essentially identical. It suffices to show that $\partial_{\widetilde{\CFK}}\tilde{f}_\psi(\bm{x})=\tilde{f}_\psi(\partial_{\CF}\bm{x})$ for all generators $\bm{x}\in\T_{\bm{\alpha_G}}\cap\T_{\bm{\beta_G}}$. Observe that the positioning of the basepoints near any element of $\bm{c}$ --- as in Figure~\ref{fig:standardtraingles} (ignoring the green arc and orange region) --- any pseudo-holomorphic disk from $\tilde{f}_\psi(\bm{x})$ must be constant on $c_i$ for all $i$. It follows that there is an exact correspondence between disks which contribute to the differential $\partial_{\CF}\bm{x}$ and $\partial_{\widetilde{\CFK}}\widetilde{f}_\psi(\bm{x})$. The result follows.
\end{proof}

Set $\tau$ to be a positive Dehn twist about the boundary on $R_+(\gamma)$. Recall that any diffeomorphism $\psi:R_+(\gamma_0)\to R_+(\gamma_0)$ can be obtained from an appropriate $ \tau^n$ by post-composing with a sequence of negative Dehn twists about some collection of simple closed curves in $R_+(\gamma_0)$. In Subsection~\ref{subsec:boundarytwists} we prove that $\tilde{f}_{\tau^n}$ and $f_{\tau^n}$ induces isomorphisms between $\SFH(\mathcal{H}(-S),\gamma(S))$ and $\widehat{\HFK}(\widehat{\mathcal{H}}(-K_\psi),A_\text{min})$ for all $n$. In Subsection~\ref{subsec:arbitrarydiffeos} we reduce the general case to the boundary twisting case.

\subsection{Boundary twists}\label{subsec:boundarytwists}
In this section we prove that $f_{\tau^n}$ and $\widetilde{f}_{\tau^n}$ induce isomorphisms on homology. We assume throughout this section that the relevant knots have $w$-avoiding exteriors.

\begin{proposition}\label{prop:hatisiso}
    For any $n$, the map ${f}_{\tau^n}^*:\SFH(\mathcal{H}(-S))\to \widehat{\HFK}(\widehat{\mathcal{H}}(-K_{\tau^n}),A_\text{min})$ is an isomorphism. Equivalently, $\widehat{\HFK}(\widehat{\mathcal{H}}(-K_{\tau^n}),A_\text{min})$, is generated by contact class like elements.
  
\end{proposition}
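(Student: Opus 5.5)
We will show that $f^*_{\tau^n}$ is injective and surjective by comparing the two sides through the Alexander grading filtration and the exterior decomposition. The key input is the known identification
\[
\widehat{\HFK}(-Y,-K,[-S],A_\text{min}) \;\cong\; \SFH(-Y(S),-\gamma(S)),
\]
due to Juh\'asz~\cite{juhasz2008floer}. Both sides are therefore abstractly isomorphic, with the same finite dimension over $\Z/2$. By Lemma~\ref{lem:chain}, $f_{\tau^n}$ is a chain map. It thus suffices to prove that $f_{\tau^n}^*$ is injective, or alternatively that it is surjective; dimension counting then gives the isomorphism.

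\textbf{Step 1: describe the subcomplex.} First I would analyse the generators of $\widehat{\CFK}(\widehat{\mathcal{H}}(-K_{\tau^n}),A_\text{min})$. By Equation~\eqref{eq:Alexandergrading}, a generator in grading $A_\text{min}$ (after passing to the $\widehat{\ }$ setting, the lowest grading) has the maximum possible number of coordinates in $\Sigma_S$. Inside $\Sigma_S$, each $\alpha_i$ meets only $\beta_i$, and in a single point $c_i$, except for the extra intersections created by the handleslides of $\mathcal{H}_2$ and by the boundary twist. For $\psi=\tau^n$ the arcs $b_i$ differ from pushoffs of $a_i$ only by spiralling near $\partial\Sigma_{G\cup P}$, inside the collar given by the product piece $\Sigma_P$. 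I would enumerate the generators as $\bm e\cup\bm c$ together with ``spiral'' generators, in which some $\alpha_i$-coordinate sits at an intersection point in the collar of $\Sigma_P$. An Alexander grading count should show that the spiral generators either are not in $A_\text{min}$ or come in cancelling pairs.

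\textbf{Step 2: show injectivity via a filtration or quotient.} I would filter the $A_\text{min}$ complex by the number of coordinates equal to $c_i$. The key claim is that every disk leaving a contact-class-like generator is constant at the $c_i$, which is exactly the proof of Lemma~\ref{lem:chain}. The image of $f_{\tau^n}$ is then a subcomplex isomorphic to $\CF(\mathcal{H}(-S))$. I would then show that the quotient complex is acyclic. The spiral intersection points near the boundary pair up through small bigons in the collar, exactly as in Vela-Vick's computation for fibered knots~\cite{vela2011transverse} and in~\cite{baldwin_note_2018}. The $w$-avoiding hypothesis on the exterior enters here. The only domains that could obstruct this cancellation are Maslov index one disks that pass through the $G$-piece and cover the region glued to the product part across the annuli. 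The region adjacent to the product annuli corresponds to the $w$ basepoint of the $w$-avoiding knot diagram for $G$, so such disks have zero multiplicity there and cannot reach the collar of $\Sigma_P$.

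\textbf{Step 3: conclude.} With the quotient acyclic, the long exact sequence shows $f^*_{\tau^n}$ is an isomorphism. Alternatively, injectivity together with the dimension equality from Juh\'asz's result gives the same conclusion.

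The main obstacle is Step 2. One must rule out index one holomorphic disks running from spiral generators to contact-class-like generators, or from contact-class-like generators into the quotient, that pass through $\Sigma_G$. Such disks would break the cancellation. I would first try to do this by positivity of domains. The multiplicities at $z_1$ are zero, so a disk's boundary on the twisted arcs forces it to cover the whole collar annulus. Covering the collar forces positive multiplicity at the point corresponding to $w$ in the $G$-diagram. This contradicts $w$-avoidance for index one disks, once we note that such a domain restricts to an index one domain on the $G$-side.
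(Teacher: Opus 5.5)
Your reduction is sound. $f_{\tau^n}$ is a chain map, and its image is a subcomplex because disks out of $\bm e\cup\bm c$ are constant at the $c_i$. By Juh\'asz's rank equality it then suffices to show injectivity, i.e.\ that the connecting map from the quotient vanishes. The gap is Step 2, which carries all the content, and it fails as described.

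First, you are cancelling the wrong generators. For $\tau^n$ the twisting region is a push-off of $\partial\Sigma_S$ into $\Sigma_{G\cup P}$, and you yourself place the spiralling in the collar of $\Sigma_P$. So the spiral intersection points lie outside $\Sigma_S$. By Equation~\eqref{eq:Alexandergrading}, a generator using one of them on an $\bm{\alpha_S}$-curve is not in $A_{\text{min}}$ at all. The non-contact-class-like generators that actually occur in the bottom grading of the paper's diagram $\mathcal{H}_1(-K_{\tau^n})$ come from the auxiliary curves $\widehat{\alpha}_i,\widehat{\beta}_i$ near $\partial\Sigma_S$, for example $x_1\in\widehat{\alpha}_{4g-3}\cap\beta_G$ and $x_2\in\widehat{\alpha}_{4g-3}\cap\widehat{\beta}_{4g-2}$. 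You exhibit no bigon pairing of such generators. Your multiplicity heuristic also runs opposite to what happens. Since every $\beta$-curve is crossed zero times algebraically across the twisting region, multiplicities directly above and below it agree. This forces the large region of $\Sigma_{P,1}$ to have multiplicity zero, rather than forcing a disk to cover the collar.

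Second, and more seriously, $w$-avoidance cannot be applied as you propose, because Maslov index does not simply restrict to the $G$-side. After stretching the neck along the curve $c$ cutting off $\Sigma_G$, an index one class splits with $\mu(\phi)=\mu(\phi_g)+\mu(\phi_p)-2n_p(\phi)$, where $p$ is the point corresponding to $w$.
\begin{itemize}
\item When $n_p=1$ and the outer part has index $2$ (the paper's $\phi_1,\phi_3$), the $G$-part has index $1$ and $w$-avoidance kills it. This is the only case your argument reaches.
\item The domain classification in the proof of Lemma~\ref{lem:inj} also produces positive classes $\phi_2,\phi_4$ whose outer part has index $1$ and whose $G$-part has index $2$. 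These are compatible with both positivity and $w$-avoidance, and nothing combinatorial excludes holomorphic representatives.
\end{itemize}
The paper disposes of $\phi_2,\phi_4$ by a genuinely holomorphic argument (Proposition~\ref{prop:neck}). It identifies each $\widehat{\mathcal{M}}(\phi_i)$ with a matched moduli space built from $\widehat{\mathcal{M}}(\phi_g,X_i)$ and $\widehat{\mathcal{M}}(\phi_p^i)$, where $|\widehat{\mathcal{M}}(\phi_p^i)|=1$. It then shows $|\widehat{\mathcal{M}}(\phi_g,X_i)|$ is independent of $i$ via a one-parameter family of matching conditions $X_t$, ruling out the possible ends. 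Hence $\phi_2$ and $\phi_4$ cancel mod $2$. The paper proves this on $\mathcal{H}_1$ and then transfers the result to $\widehat{\mathcal{H}}$ via the handleslide map $g_*$ and the map $h$. Your proposal has no substitute for the $\phi_2,\phi_4$ cancellation, so it establishes neither vanishing of the connecting map nor acyclicity of the quotient.
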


 \begin{figure}[h]
 \centering
\begin{tikzpicture}

\fill[orange!30] (1,0) -- (3,0) -- (3,-2) -- (1,0);

\draw[dashed] (0,-3) rectangle (4,1);

\draw[blue, thick] (0,0) -- (4,0);

\draw[red, thick] (3,1) -- (3,-3);

\draw[green, thick] (0,1) -- (4,-3);

     \filldraw[black] (1,0) circle (2pt);
    \node at (1,-0.3) {\( \theta_i \)};

    \filldraw[black] (3,0) circle (2pt);
    \node at (3.2,-0.2) {\( c_i \)};

      \filldraw[black] (3,-2) circle (2pt);
    \node at (2.8,-2) {\( c_i' \)};


\node at (0.25,0.25) {\( D_1\)};

\node at (2,0.25) {\( D_2\)};

\node at (3.25,0.25) {\( 0\)};

\node at (0.5,-1) {\( 0\)};

\node at (2.5,-1) {\( D_3\)};

\node at (3.25,-1) {\( D_4\)};

\node at (3.25,-2.7) {\( D_5\)};

\end{tikzpicture}

    \caption{We will frequently use the fact that near elements $c_i$ of $\bm{c}$, the shadow of any pseudo-holomorphic triangle is as shown.}\label{fig:standardtraingles}
   \end{figure}

 We will prove this proposition by way of a sequence of Lemmas. For the rest of this subsection we assume that our Heegaard diagrams satisfy some additional conditions:
 
 \begin{enumerate}
\item  Exactly one $\bm{a}$ arc, $a_G$,  intersects $\Sigma_G$ non-trivially.
\item  There is exactly one arc $a_s$ in $\bm{a}$ which is separating in $\Sigma_P$, separating it into two pieces $\Sigma_{P,1}\cup \Sigma_{P,2}$.

\item The only arc contained in $\Sigma_{P,2}$, is one of two components of $a_G\cap \Sigma_P$, and that $z_1,w_1$ are contained in the boundary of $\Sigma_{P,1}$. See Figure~\ref{fig:protoHD}.

\end{enumerate}

 Let $\alpha_G$ and $\alpha_s$ denote the $\alpha$-curves corresponding to the arcs $a_G$ and $a_s$ respectively. We can make these assumptions since we are assuming that the exterior of our chosen Seifert surface for $K$ has exterior obtained by gluing a product sutured manifold to a knot exterior along a pair of sutures; a consequence of the definition of knots with $w$-avoiding exteriors. With this assumption in place we can proceed to prove the following lemma.

\begin{lemma}\label{lem:inj}
   The map $\tilde{f}_{\tau^n}^*:\SFH(\mathcal{H}(-S))\to\widetilde{\HFK}(\mathcal{H}_1(-K_{\tau^n}),A_\text{min})$ is injective.
\end{lemma}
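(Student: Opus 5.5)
Injectivity will come from the fact that $\tilde f_{\tau^n}$ is a chain map (Lemma~\ref{lem:chain}) which identifies $\CF(\mathcal{H}(-S))$ with a subcomplex of $\widetilde{\CFK}(\mathcal{H}_1(-K_{\tau^n}),A_\text{min})$ that splits off as a direct summand, at least up to a filtration argument. I will produce a left inverse on homology: a chain map $g:\widetilde{\CFK}(\mathcal{H}_1(-K_{\tau^n}),A_\text{min})\to\CF(\mathcal{H}(-S))$ with $g\circ\tilde f_{\tau^n}=\mathrm{id}$. The natural candidate is the projection that sends a generator $\bm{x}$ to $\bm{x}\setminus\bm{c}$ if $\bm{c}\subset\bm{x}$ and to $0$ otherwise. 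Equation~\ref{eq:Alexandergrading} says that a generator in grading $A_\text{min}$ has exactly as many coordinates in $\Sigma_S$ as there are curves meeting $\Sigma_S$. So the first step is a counting argument: in $A_\text{min}$, the coordinates of $\bm{x}$ on $\bm{\alpha_S}\cup\widehat{\bm\alpha}$ all lie in $\Sigma_S$. Since in $\mathcal{H}_1$ the only intersections inside $\Sigma_S$ are the $c_i$ and $\widehat c_i$, together with finitely many points of $\widehat\alpha_i\cap\widehat\beta_j$ near $\partial\Sigma_S$, the generators in $A_\text{min}$ are of the form $\bm{e}\cup\bm{c}'$. Here $\bm{e}\in\T_{\bm\alpha_G}\cap\T_{\bm\beta_G}$ and $\bm{c}'$ is a choice of $\Sigma_S$-points, with $\bm{c}'=\bm c$ being one such choice.

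The second step checks that $g$ is a chain map, i.e.\ that differentials cannot move between the $\bm c$-part and the other $\bm c'$-parts in a way that breaks the projection. For each disk domain $D$ from $\bm{e}\cup\bm c'$ to $\bm{e}'\cup\bm c$ with $\bm c'\neq\bm c$, or the reverse, I will show that $D$ must have nonzero multiplicity at a basepoint. The local picture near each $c_i$ and $\widehat c_i$ (Figure~\ref{fig:standardtraingles}, ignoring the triangle data) has the $z$ and $w$ basepoints in adjacent quadrants. So any domain with a corner at $c_i$, or leaving $c_i$, picks up a basepoint, exactly as in the proof of Lemma~\ref{lem:chain}. Once disks touching $\bm c$ are forbidden, $\bm c$-containing generators form both a subcomplex and a quotient complex, so $g$ is a chain map and $g\circ\tilde f=\mathrm{id}$ at the chain level. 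Injectivity on homology then follows.

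I expect the main obstacle to be the case where the $\Sigma_S$-coordinates differ only at points of $\widehat\alpha_i\cap\widehat\beta_j$ near $\partial\Sigma_S$. A disk there can run through $\Sigma_{G\cup P}$ along the $\alpha_G$, $\alpha_s$ and $\bm b$ arcs, and the local basepoint argument may not obviously apply. Here I will use the three assumptions on the diagram (only $a_G$ enters $\Sigma_G$, $a_s$ separates $\Sigma_P$, and $z_1,w_1\in\partial\Sigma_{P,1}$), together with the explicit form of $\bm b$ for $\psi=\tau^n$: a boundary twist only wraps the $b_i$ around $\partial\Sigma_{G\cup P}$. With these I will argue that any such domain has positive multiplicity at $z_1$ or some $w_i$, by tracing its boundary along the twisted arcs near $\partial\Sigma_S$. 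If this fails, the fallback is to use the $w$-avoiding hypothesis on the $G$ piece to obtain a filtration by $w$-multiplicity, and to conclude with a spectral sequence argument whose $E_1$-page contains $\SFH(\mathcal{H}(-S))$ as a summand.
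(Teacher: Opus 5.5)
\textbf{There is a genuine gap, in your second step.} The basepoints near $c_i$ and $\widehat{c}_i$ cannot force every domain with a corner there to meet a basepoint.

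At a point of $\bm{x}\setminus\bm{y}$ or $\bm{y}\setminus\bm{x}$, the local multiplicities satisfy $n_1+n_3-n_2-n_4=\pm1$. The sign depends on whether the point lies in the initial or the terminal generator.
\begin{itemize}
\item If the basepoints sit in two adjacent quadrants ($n_1=n_2=0$), neither sign is excluded.
\item If they sit in two opposite quadrants ($n_1=n_3=0$, as with $z_1$ in $\widehat{\mathcal{H}}$), exactly one sign is excluded.
\end{itemize}
This one-sided constraint is precisely Lemma~\ref{lem:chain}: disks \emph{out of} a generator containing $\bm{c}$ are constant on $\bm{c}$, so the image of $\tilde f_{\tau^n}$ is a subcomplex. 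Nothing local prevents disks \emph{into} such a generator. Showing that those disks count to zero, which is your ``quotient complex'' claim, is the entire content of the lemma.

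Such basepoint-free domains really occur. The paper uses the twisting region and assumptions (1)--(3), as you anticipated. With these it shows that the large region of $\Sigma_{P,1}$ has multiplicity zero, and that $\phi$ is constant away from $\alpha_s,\alpha_G,\beta_s,\beta_G,\widehat\alpha_{4g-2},\widehat\alpha_{4g-1},\widehat\beta_{4g-2},\widehat\beta_{4g-1}$. It is nonetheless left with four classes $\phi_1,\dots,\phi_4$. In Case~1, for instance, the region $A$ adjacent to $c_{2g}$ has multiplicity one, so there is a corner at $c_{2g}$ with no basepoint hit. No tracing of boundaries along the twisted arcs disposes of these classes.

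\textbf{The missing idea is analytic, and it is the only place the $w$-avoiding hypothesis enters.} The paper stretches the neck along a boundary circle of $\Sigma_G$. Each $\phi_i$ then splits into two pieces: $\phi_{i,g}$ in the diagram for $G$, with that circle collapsed to a point $p$, and $\phi_{i,p}$ in the rest of the diagram. These satisfy $1=\mu(\phi_{i,g})+\mu(\phi_{i,p})-2n_p$.
\begin{itemize}
\item For $\phi_1$ and $\phi_3$ one finds $n_p=1$ and $\mu(\phi_{i,p})=2$. The $G$-piece is then a Maslov index one class through $p$. The $w$-avoiding hypothesis forbids this, with $p$ playing the role of $w$.
\item For $\phi_2$ and $\phi_4$, the counts are identified with matched moduli spaces $\mathcal{M}(\phi_g,X_i)\times\mathcal{M}(\phi_p^i)$. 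Since $|\widehat{\mathcal{M}}(\phi_p^i)|=1$ and the matched count is independent of $X_i$, the two counts agree mod $2$ (Proposition~\ref{prop:neck}).
\end{itemize}

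\textbf{Your fallback cannot replace this.} In $\widetilde{\CFK}$, every counted disk already has zero multiplicity at all $z$ and $w$ basepoints, so there is no $w$-multiplicity filtration to exploit. The hypothesis lives on a different diagram, the one for the $w$-avoiding knot whose exterior is $G$, and becomes usable only after the degeneration. Moreover, a spectral sequence with $\SFH(\mathcal{H}(-S))$ as a summand of $E_1$ would not yield injectivity, since higher differentials could hit that summand.

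A symptom of the gap is that your main argument uses neither the $w$-avoiding hypothesis nor the special form of $\tau^n$. It would therefore prove the lemma for knots that are not $w$-avoiding and for arbitrary monodromy. The paper says explicitly (Remark~\ref{rem:hypandgeneral}) that it does not know whether the hypothesis can be dropped.
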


   \begin{figure}[ht]
    \centering

   \begin{tikzpicture}


\draw[ thick]
(3,3) -- (3,1)
.. controls (3,0) and (3,-3) ..
(1,-3) -- (-2,-3)
.. controls (-4,-3) and (-4,0) ..
(-4,1) --  (-4,3);

\draw[ thick](1,3)
.. controls (1,2) and (-1,2) ..
(-1,3) ;

\draw[pink, thick] (2,3) ellipse (1 and 0.5);

\draw[pink, thick] (-2.5,3) ellipse (1.5 and 0.5);

\draw[orange, thick] (-4,0)
.. controls (-2,-0.5) and (2,-0.5) .. (3,0);
\draw[orange, thick, dashed] (-4,0)
.. controls (-2,0.5) and (2,0.5) .. (3,0);

\draw[orange, thick] (-4,0.5)
.. controls (-2,0) and (2,0) .. (3,0.5);
\draw[orange, thick, dashed] (-4,0.5)
.. controls (-2,1) and (2,1) .. (3,0.5);

\draw[draw=black, thick] (0,-2) ellipse (0.5 and 0.3);
\draw[draw=green, thick] (0,-2) ellipse (0.6 and 0.4);

\draw[draw=black, thick] (-2.5,-1.5) ellipse (0.3 and 0.2);

\draw[draw=black, thick] (-2.5,1.5) ellipse (0.3 and 0.2);


\draw[red, thick] (0,-1.7)
.. controls (-0.3,-1) and (-0.3,0) .. (0,2.25);
\draw[red, thick, dashed] (0,-1.7)
.. controls (0.3,-1) and (0.3,0) .. (0,2.25);

\draw[blue, thick] (-0.4,0.1)
.. controls (-0.4,1) and (0.2,2) .. (0.2,2.25);
\draw[blue, thick,dashed] (0.2,2.25) .. controls (0.2,2) and (0.3,2) .. (0.3,0.8);
\draw[blue, thick,dashed] (0.3,0.4) .. controls (0.3,-1.5) and (0.1,-1.7) .. (0.1,-1.7);
\draw[blue, thick] (-0.4,-0.4)
.. controls (-0.4,-1) and (0,-1.5) .. (0.1,-1.7);

\draw[red, thick] (-1.5,2.65)
.. controls (-1.5,-5) and (2,-4) .. (2,2.5);

\draw[blue, thick] (-1.65,2.6)
.. controls (-1.65,2) and (-1.7,1) .. (-1.6,0.2);

\draw[blue, thick] (-1.6,-0.3)
.. controls (-1.6,-3.5) and (1.3,-3.3) .. (1.6,-0.3);

\draw[blue, thick] (1.6,0.2)
.. controls (1.6,1) and (1.8,2) .. (1.8,2.5);

\draw[red, thick] (-2.5,-1.3)
.. controls (-2.8,-0.5) and (-2.8,0.5) .. (-2.5,1.3);
\draw[red, thick, dashed] (-2.5,-1.3)
.. controls (-2.2,-0.5) and (-2.2,0.5) .. (-2.5,1.3);

\draw[blue, thick] (-2.3,-1.35)
.. controls (-2.3,-0.5) and (-2.9,-1) .. (-2.9,-0.2);

\draw[blue, thick, dashed] (-2.3,-1.35)
.. controls (-2.3,-1) and (-2.1,-0.5) .. (-2.1,0.35);
\draw[blue, thick] (-2.9,0.3)
.. controls (-2.9,1) and (-2.35,1) .. (-2.35,1.3);

\draw[blue, dashed, thick] (-2.35,1.3)
.. controls (-2.35,1.1) and (-2.1,1) .. (-2.1,0.8);

\draw[red, thick] (-2.5,2)
.. controls (-4,2) and (-4,-2) .. (-2.5,-2) .. controls (-1.5,-2) and  (-1.5,2).. (-2.5,2);

\draw[blue, thick] (-3.7,0.4)
.. controls (-3.7,2) and (-2,3.5) .. (-2,0.2);

\draw[blue, thick] (-3.7,-0.05)
.. controls (-3.7,-2) and (-2,-3.5) .. (-2,-0.3);


        \node at (2,-2) {\(\Sigma_S \)};

        \node at (2.5,2) {\(\Sigma_{P,2} \)};

         \node at (-1,2) {\(\Sigma_{P,1} \)};

\end{tikzpicture}

    \caption{The pink curves indicate the boundary of $\Sigma_G$ --- the part of the Heegaard diagram corresponding to the top piece --- while the orange curves indicate the boundary of the twisting region. $a_S$ is the separating arc in $\Sigma_P$, $a_G$ is the unique arc which enters the top piece. $\Sigma_{P,1}$ is the component of $\Sigma_P\setminus a_S$ on the left hand side of the figure, while $\Sigma_{P,2}$ is the component on the right. The green curve will play a role in the proof of Theorem~\ref{prop:generalrankbound}.}\label{fig:protoHD}
\end{figure}  
  \begin{proof}
 Fix a Heegaard diagram $\mathcal{H}_1(-K_{\tau^n})$ as in Section~\ref{subsubsec:H1knot} satisfying the additional assumptions described above. We need to check that if there is a cycle $\bm{y}\in\CF(\mathcal{H}(-S))$  such that $\widetilde{f}_{\tau^n}(\bm{y})=\partial_{\widetilde{\CFK}} \bm{x}$ for some $\bm{x}\in\widetilde{\CFK}(\mathcal{H}_1(-K_{\tau^n}))$ then there is an $\bm{x}'\in\CF(\mathcal{H}(-S))$ with $\bm{y}=\partial_{\CF} \bm{x}'$. 
 It suffices to show that $\bm{x}=\widetilde{f}_{\tau^n}(\bm{z}')$ for some $\bm{z}'\in \CF(Y,\gamma_0)$. To see that this is sufficient, observe that the components of the differential from $\bm{x}$ are identified with the components of the differential from $\bm{z}'$ by the positioning of the basepoints near the $c_i$ and $\widehat{c}_i$, so that if $\bm{x}=\widetilde{f}_{\tau^n}(\bm{z}')$  then $\partial_{\CF}(\bm{z}')=\bm{y}$.

 Observe that the Heegaard diagram $\mathcal{H}_1(-K_{\tau^n})$ is standard outside of a neighborhood of a curve $\eta$ defined as a push-off of $\partial\Sigma_S$ into $\Sigma'$. We will refer to $\nu(\eta)$ as \emph{the twisting region}. See Figure~\ref{fig:protoHD}. Observe also that every intersection point $x$ in an $\alpha_i,\beta_i,\widehat{\alpha}_i$ or $\widehat{\beta}_i$ curve in an element of $\T_{\bm{\alpha}}\cap\T_{\bm{\beta}}$ with non-trivial coefficient in $\bm{x}$ must occur in $\Sigma_S$ so as to yield a generator of the correct Alexander grading in $\widetilde{\CFK}(\mathcal{H}_1(-K_{\tau^n}))$ by Equation~\ref{eq:Alexandergrading}.
 
 Let $\phi$ be a homotopy-class of pseudo-holomorphic disks from $\bm{x}$ to $\bm{y}$. Let $D(\phi)$ denote the domain of $\phi$ i.e. the projection of a lift of $\phi$ to $\Sigma^{\times (6g-1)}$ to $\Sigma$. Observe that in moving from one boundary component of the twisting region to the other boundary component in the complement of the $\alpha$-curves, one passes each $\beta$-curve zero times algebraically. Thus the multiplicity of a domain directly above the twisting region equals the multiplicity of the corresponding domain directly below. In particular, it follows that the large domain in $\Sigma_{P,1}$ --- where the label $\Sigma_{P,1}$ lies in Figure~\ref{fig:protoHD} ---  has multiplicity zero, since this is separated from the basepoint $z_1$ by the twisting region.  It follows in turn that a component of $\partial D(\phi)$ cannot cross into $\Sigma_{G\cup P}\setminus\nu(\partial\Sigma_S)$ at all save for on $a_s$, as all other such arcs have regions multiplicity of zero on both sides as they traverse $\Sigma_P$. Consequently, the multiplicities in domains separated by arcs $a_i\cap \Sigma_{G\cup S}$ or $\overline{a}_i\cap\Sigma_{G\cup S}$ are equal, where $a_i\neq a_s$.
 
 By examining the local multiplicities of $D$ in a a small neighborhood of $\widehat{\alpha}_1\cup\widehat{\beta_2}$, it again follows from the fact that the multiplicity of domains separated by a vertical arc in the twisting region agree that $\phi$ must be constant on  $\widehat{{\beta}}_1$ and then in turn   $\widehat{{\alpha}}_1$. Indeed, by proceeding round $\partial S$ against the orientation induced by $S$, we see that $\phi$ is constant on\begin{enumerate}
     \item  All curves $\widehat{{\beta}}_i$ with $i\leq 4g-3$; i.e. aside from $\widehat{\beta}_{4g-2}$,$\widehat{\beta}_{4g-1}$.
     \item All curves $\widehat{\alpha}_i$ with $i\leq 4g-3$; i.e. aside from $\widehat{\alpha}_{4g-2}$,$\widehat{\alpha}_{4g-1}$. 
     \item All $\alpha_i$ curves in $\bm{\alpha_S}$ other than $\alpha_s$ and $\alpha_{G}$.
     \item All $\beta_i$ curves in $\bm{\beta_S}$ other than $\beta_s$ and $\beta_{G}$.
 \end{enumerate}

  \noindent This allows us to ignore all $\bm{\alpha}$ and $\bm{\beta}$-curves other than those in the set $$\bm{\alpha_G}\cup\bm{\beta_G}\cup \{\widehat{\beta}_{4g-2},\widehat{\beta}_{4g-1},\widehat{\alpha}_{4g-2},\widehat{\alpha}_{4g-1}, \alpha_s,\alpha_G,\beta_s,\beta_G\}.$$

  Here, $\beta_G$ and $\alpha_G$ are the curves corresponding to the arc $a_G$, i.e the one which enters $\Sigma_G$. Likewise, $\beta_s$ and $\alpha_s$ are the curves corresponding to the arc $a_s$. 

 \begin{figure}
 
       \centering
\begin{tikzpicture}

\draw[dashed] (0,0) rectangle (8,4);

\draw[red, thick] (3,4)--(3,0);

\draw[red, thick] (7,4)--(7,0);

\draw[blue, thick] (1,4)--(1,1) arc[start angle=180, end angle=270, radius=0.5]--(1.5,0.5)--(3,0.5) arc[start angle=90, end angle=0, radius=0.5];

\draw[blue, thick] (5,4)--(5,1)arc[start angle=180, end angle=270, radius=0.5]--(5.5,0.5)--(7,0.5) arc[start angle=90, end angle=0, radius=0.5];

\draw[blue, thick] (2,3)--(3.5,3)arc[start angle=90, end angle=0, radius=0.5]-- (4,1.5)arc[start angle=0, end angle=-90, radius=0.5]--(2,1)arc[start angle=270, end angle=180, radius=0.5]--(1.5,2.5)arc[start angle=180, end angle=90, radius=0.5];

\draw[blue, thick] (6.5,3)--(8,3);
\draw[blue, thick](8,1) -- (6.5,1)arc[start angle=270, end angle=180, radius=0.5]--(6,2.5)arc[start angle=180, end angle=90, radius=0.5];

\draw[red, thick] (4,2.5)--(6,2.5)arc[start angle=90, end angle=-90, radius=0.5] --(4,1.5)arc[start angle=270, end angle=90, radius=0.5];


 \filldraw[black] (3,0.5) circle (2pt);
    \node at (2.8,0.25) {\( c_1 \)};

     \filldraw[black] (4,1.5) circle (2pt);
    \node at (4.2,1.2) {\( \widehat{c}_1 \)};

      \filldraw[black] (7,0.5) circle (2pt);
    \node at (6.8,0.25) {\({c}_3 \)};

      \node at (2,2) {\({w}_1 \)};

         \node at (3.75,2) {\({z}_2 \)};

          \node at (6.25,2) {\({w}_2 \)};

           \node at (0.5,0.5) {\(0 \)};

           \node at (4,0.5) {\(0 \)};

             \node at (4,3.5) {\(0 \)};

     \node at (2,3.5) {\(A \)};

        \node at (3.25,2) {\(B\)};

          \node at (4.5,2) {\(C\)};
            \node at (5.5,2) {\(D\)};
              \node at (5.5,3) {\(E\)};
              \node at (5.5,1) {\(G\)};

                  \node at (6.75,2) {\(F \)};

\end{tikzpicture}
\caption{A small rectangle containing $\widehat{\beta}_1\cup\widehat{\alpha}_1$. Similar pictures can be obtained for $\widehat{\beta}_i\cup\widehat{\alpha}_i$ for $i< 4g-3$.}
     \label{fig:movingalongboundary}
 \end{figure}

 With this in mind we consider a portion of a neighborhood of $\Sigma\cup \Sigma_{P,1}$ as shown in Figure~\ref{fig:}. We have neglected to include the curve $\widehat{\alpha}_{4g-3}$ since we know from above that the $\phi$ is constant on $\widehat{c}_{4g-3}$.

 \begin{figure}
 
       \centering
\begin{tikzpicture}

\draw[thick, dashed] (0,0) -- (12,0) -- (12,6)arc[start angle=0, end angle=180, radius =6] --(0,0);


\draw[thick, dashed] (5,8) -- (7,8) arc[start angle=90, end angle=-90, radius =0.5] --(5,7)arc[start angle=270, end angle=90, radius =0.5];


\draw[red, thick] (3,0)--(3,6)arc[start angle=180, end angle=0, radius=4]--(11,6)--(11,0);

\draw[orange, thick, dashed] (0,4)--(12,4);
\draw[orange, thick, dashed] (0,6)--(12,6);

\draw[red, thick] (7,7)--(7,0);


\draw[blue, thick] (1,6)arc[start angle=180, end angle=0, radius=4];


\draw[blue, thick] (1,4)--(1,1) arc[start angle=180, end angle=270, radius=0.5]--(1.5,0.5)--(3,0.5) arc[start angle=90, end angle=0, radius=0.5];


\draw[blue, thick] (5,7) --(5,6);

\draw[blue, thick] (5,4)--(5,1)arc[start angle=180, end angle=270, radius=0.5]--(5.5,0.5)--(7,0.5) arc[start angle=90, end angle=0, radius=0.5];

\draw[blue, thick] (9,4)--(9,1) arc[start angle=180, end angle=270, radius=0.5]--(9.5,0.5)--(11,0.5) arc[start angle=90, end angle=0, radius=0.5];

\draw[blue, thick] (2,3)--(3.5,3)arc[start angle=90, end angle=0, radius=0.5]-- (4,1.5)arc[start angle=0, end angle=-90, radius=0.5]--(2,1)arc[start angle=270, end angle=180, radius=0.5]--(1.5,2.5)arc[start angle=180, end angle=90, radius=0.5];

\draw[blue, thick] (6.5,3)--(8,3)arc[start angle=90, end angle=0, radius=0.5]-- (8.5,1.5)arc[start angle=0, end angle=-90, radius=0.5]--(8,1) -- (6.5,1)arc[start angle=270, end angle=180, radius=0.5]--(6,2.5)arc[start angle=180, end angle=90, radius=0.5];

\draw[red, thick] (4,2.5)--(6,2.5)arc[start angle=90, end angle=-90, radius=0.5] --(4,1.5)arc[start angle=270, end angle=90, radius=0.5];

\draw[red, thick] (8,2.5)--(9.5,2.5)arc[start angle=90, end angle=-90, radius=0.5] --(8,1.5)arc[start angle=270, end angle=90, radius=0.5];


 \filldraw[black] (3,0.5) circle (2pt);
    \node at (2.5,0.25) {\( c_{2g} \)};

     \filldraw[black] (4,1.5) circle (2pt);
    \node at (4.5,1.1) {\( \widehat{c}_{4g-2} \)};

     \filldraw[black] (5,1.5) circle (2pt);
    \node at (5.3,1.2) {\( x_1 \)};
     \filldraw[black] (6,1.5) circle (2pt);
    \node at (6.4,1.2) {\( x_2 \)};
    
      \filldraw[black] (8.5,1.5) circle (2pt);
    \node at (8.7,1.2) {\( \widehat{c}_{4g-1} \)};

      \filldraw[black] (7,0.5) circle (2pt);
    \node at (6.8,0.25) {\({c}_1 \)};

      \filldraw[black] (11,0.5) circle (2pt);
    \node at (10.7,0.25) {\({c}_{2g} \)};

      \node at (2,2) {\({w} \)};

         \node at (3.75,2) {\({z} \)};

          \node at (6.25,2) {\({w} \)};

           \node at (8,2) {\(z \)};

            \node at (9.5,2) {\(w \)};

           \node at (0.5,0.5) {\(0 \)};

           \node at (4,0.5) {\(0 \)};

             \node at (8,0.5) {\(0 \)};

              \node at (11.5,2.5) {\(0 \)};

               \node at (6,11) {\(0 \)};

     \node at (2,3.5) {\(A \)};

     \node at (2,6.5) {\(A \)};

        \node at (3.25,2) {\(B\)};

          \node at (4.5,2) {\(C\)};

             \node at (4.5,3.5) {\(K\)};
              \node at (6,9) {\(K\)};
            \node at (5.5,2) {\(D\)};
              \node at (5.5,3) {\(E\)};

                \node at (5.5,6.5) {\(E\)};
              \node at (5.6,1) {\(G\)};

                  \node at (6.75,2) {\(F \)};
                  \node at (7.25,2) {\(H \)};
                   \node at (8.75,2) {\(I \)};

                    \node at (8.75,3.5) {\(K\)};
                  
                         \node at (10.5,2) {\(J \)};
                          \node at (10.5,6.5) {\(J \)};

\end{tikzpicture}
\caption{A neighborhood of $\Sigma_{P,1}$. Since the subscripts on the $z$ and $w$ basepoints play no role, we have suppressed them here. Part of the curve $\widehat{\alpha}_{4g-3}$ is not shown, since we know that the multiplicity of $D(\phi)$ does not change as we cross it. Note that neighborhoods of the points two points $c_{2g}$ are identified. The central region bounded above and below by dashed orange curves is the twisting region.}
     \label{fig:}
 \end{figure}

To begin, notice that no component of $\partial D(\phi)$ can pass along $\beta_G$ or $\alpha_G$ into $\Sigma_G$, since we have already shown that the multiplicity of the domains on either side of each curve are zero in the first part of the proof of this proposition. In particular, it follows that $E=K$. Here, and for the rest of the proof, the capital letters indicate the multiplicity of $\phi$ in the relevant elementary domain, as shown in Figure~\ref{fig:}. From the positioning of the basepoints near $c_{2g}$, the boundary of $\phi$ can traverse $\beta_{2g}$ at most once. In particular, the multiplicity in regions $A$ is at most one. It is easily seen that the multiplicity in region $A$ is either one or zero; i.e.  $A=0$ or $A=1$. We treat these two cases separately.

 \noindent\textbf{Case 1: $A=1$.} Observe that the component of $\partial D(\phi)$ out of $c_{2g}$ must pass directly upwards from the left hand-copy of $c_{2g}$ in Figure~\ref{fig:}. Since $\alpha_{2g}$ has a multiplicity zero domain to its left in the upper part of the Heegaard diagram, it follows that the component of $\partial D(\phi)$ must pass from $c_{2g}$ to the point directly above it on $\alpha_{2g}\cap\beta_{4g-3}$. It follows that region $K$ has multiplicity one. The positioning of the basepoint near $\widehat{c}_{4g-3}$ implies that $\partial D(\phi)$ must then proceed clockwise around $\widehat{\beta}_{4g-3}$ stopping on the first occasion it reaches $\widehat{c}_{4g-3}$. In particular we must have that $C=1$. Consider where $\partial D(\phi)$ goes next. Since the multiplicity in region $C$ is one, there are two options. The arc either stops at the first time passing through\begin{enumerate}
     \item[\textbf{Case 1a):}]   $x_1$ --- the point  in $\widehat{\alpha}_{4g-3}\cap\beta_{G}$ as shown in Figure~\ref{fig:}, or 
     \item[\textbf{Case 1b):}] $x_2$ --- the point in $\widehat{\beta}_{4g-2}\cap\widehat{\alpha}_{4g-3}$ as shown in Figure~\ref{fig:} --- so that $E=D=G+1$.
 \end{enumerate} In either case we have that \begin{equation}\label{eq:reuse}
     K=E=D=G+1.
 \end{equation} We consider these two subcases:

 \noindent\textbf{Case 1a):} Since the $\partial D(\phi)$ cannot pass into $\Sigma_{P,2}$ from $x_1$, we must have that the multiplicity of domains $D$ and $G$ are both one. Similarly, it follows that the multiplicities in region $F$ and $H$ are both zero and then in turn that the multiplicities in domain $I$ and $J$ are one and zero, respectively.  This completely specifies the shadow of $\phi$ when restricted to $\Sigma_P\cup\Sigma$. We call this homology class $\phi_1$.

 \noindent \textbf{Case 1b):} The positioning of the basepoint $w_{2g-1}$ implies that $D=E=1$, $K=H+1$ and $I=1$. The positioning of the basepoint near  $\widehat{c}_{4g-3}$ then implies that passing along $\widehat{\alpha}_{4g-3}$, $\phi$ must stop at the first intersection point to the right of $\widehat{\alpha}_{4g-3}$ at the first time of passing. Thus $J=0$. This and Equation~\ref{eq:reuse} determine the shadow of the disk in $\Sigma_P\cup S$. We call this class $\phi_2$.

  \noindent\textbf{Case 2: $A=0$.} It follows that the multiplicity in domain $C$ must also be zero by the positioning of the basepoint, we reduce to a case as shown in Figure~\ref{fig:movingalongboundary}. In particular we find that the multiplicities of all domains shown are zero, and so in turn that $\phi$ is constant on $\bm{c}\cup\widehat{\bm{c}}$. Suppose then that multiplicity in domain $K$ is one. It follows that the multiplicity in domains $B$ and $J$ are both one. Consider the behaviour of $\partial D(\phi)$ upon leaving $\widehat{c}_{4g-3}$. The positioning of the basepoints implies that $C=0$ and that $\partial D(\phi)$ must travel clockwise along $\widehat{\alpha}_{4g-3}$, terminating at on the first opportunity, either at a) $x_2$ in which case $D=G$, or b) $x_1$, in which case $G=D+1$. We analyse each of these two subcases.

 \noindent \textbf{Case 2.a):} Suppose we are in the first subcase. Observe that $\phi$ must be constant on $c_1$ as there are no possible intersection points for $\partial D(\phi)$ to travel to from $c_1$. Here we are abusing notation by allowing $c_1$ to denote the $c_j$ intersection point on the $\alpha$-curve corresponding to $a_s$; see Figure~\ref{fig:}. It follows that $G=0$, so that $K=E=F=H=1$. It is then easy to see from the positioning of the basepoints near $\widehat{c}_{4g-2}$ that $I=0$, and $J=1$ completely determining the domain of $\phi$ in the portion of the Heegaard diagram shown.   We call the homology class of the domain $\phi_3$.
 
   \noindent \textbf{Case 2.b):}  Suppose that we are in the second subcase. The positioning of the basepoint near $x_1$ implies that $G=1$ and that $F=H=1$. The positioning of the basepoint near $\widehat{c}_{4g-2}$ implies that $I=0$, fully determining the domain. We call the homology class of the domain $\phi_4$.

  The pseudoholomorphic disks from any $\bm{x}$ to $\widetilde{f}_{\tau^n}(\bm{y})$ that are not constant on the $c_i$ occur in pairs and therefore cancel modulo $2$. We defer the rigorous proof of this claim to Subsection~\ref{subsec:neck}, since it is rather involved. Consequently, if $\langle \bm{x}, \widetilde{f}_{\tau^n}(\bm{y})\rangle\neq 0$, then we have that $\bm{x}$ contains all of the intersection points $c_i$ and $\widehat{c}_i$. That is, $\bm{x}$ is in the image of $\widetilde{f}_{\tau^n}$, and we have the desired result.
  \end{proof}

We can now prove the following version of Proposition~\ref{prop:hatisiso}.

\begin{proposition}\label{prop:tildeisiso}
    For any $n$, the map $\widetilde{f}_{\tau^n}^*:\SFH(\mathcal{H}(-S))\to \widetilde{\HFK}(\widehat{\mathcal{H}_1}(-K_{\tau^n}),A_\text{min})$ is an isomorphism. In particular, $\widetilde{\HFK}(\mathcal{H}_1(-K_{\tau^n}),A_\text{min})$ is generated by contact class like generators.
\end{proposition}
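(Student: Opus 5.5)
Since Lemma~\ref{lem:inj} already shows that $\widetilde{f}_{\tau^n}^*$ is injective, it remains to prove surjectivity. My plan is to get this from a rank comparison rather than a direct chain-level argument. The target is
\[
\widetilde{\HFK}(\mathcal{H}_1(-K_{\tau^n}),A_\text{min})\cong\widetilde{\HFK}(-Y,-K_{\tau^n},[-S],A_\text{min}),
\]
the bottom Alexander grading of the $4g$-pointed knot Floer homology. By the symmetry/duality of knot Floer homology, together with Equation~\eqref{eq:tilden}, this group is identified (up to grading shift) with the top grading $\widetilde{\HFK}(K,2g,g+\tfrac{2g-1}{2})$, since the bottom and top gradings differ by $A_\text{max}-A_\text{min}$. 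Because $V$ is supported in Alexander gradings $\pm\frac12$, the tensor decomposition $\widetilde{\HFK}(K,n)\cong\widehat{\HFK}(K)\otimes V^{\otimes(n-1)}$ shows that the extremal grading of $\widetilde{\HFK}(K,2g)$ has the same rank as $\widehat{\HFK}(K,g(K))$; only the extremal term of each $V$ factor can contribute.

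The next step is to invoke Juh\'asz's decomposition theorem~\cite{juhasz2008floer}. Decomposing the sutured knot exterior along the minimal genus Seifert surface $S$ gives
\[
\widehat{\HFK}(K,g(K))\cong \SFH(Y(S),\gamma(S)).
\]
The Heegaard diagram $\mathcal{H}(-S)$ represents $(-Y(S),-\gamma(S))$, and $\SFH$ of the reversed-orientation manifold has the same rank (it is dual). Hence $\dim\SFH(\mathcal{H}(-S))=\dim\widetilde{\HFK}(\mathcal{H}_1(-K_{\tau^n}),A_\text{min})$. All groups are finite-dimensional over $\Z/2$, so the injective map $\widetilde{f}_{\tau^n}^*$, which is a well-defined chain map on homology by Lemma~\ref{lem:chain}, is an isomorphism. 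The second sentence of the proposition then follows because the image of $\widetilde{f}_{\tau^n}$ consists by definition of contact class like elements.

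The main obstacle is bookkeeping. One must check that the Alexander grading conventions for $[-S]$ place $\widetilde{f}_{\tau^n}$ in the extremal grading $A_\text{min}=\tfrac12-3g$, which is consistent with Equation~\eqref{eq:Alexandergrading}. One must also confirm that the multi-pointed extremal group has no extra rank coming from the $V$ factors, and that the orientation reversal $-Y$, $-K$ does not change ranks. Injectivity itself relies on the deferred cancellation argument of Subsection~\ref{subsec:neck}, which I am taking as given along with Lemma~\ref{lem:inj}.
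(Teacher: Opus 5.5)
Your argument is essentially the paper's: you combine injectivity from Lemma~\ref{lem:inj} with the rank equality from Juh\'asz's decomposition formula \cite[Theorem 1.5]{juhasz2008floer}, and your reduction of the multi-pointed extremal grading to $\widehat{\HFK}(K,g(K))$ via the $V$ factors only spells out what the paper leaves implicit. One small bookkeeping slip: $\mathcal{H}_1(-K_{\tau^n})$ carries $4g$ pairs of basepoints, so the relevant group is $\widetilde{\HFK}(K,4g)$ (this is what gives $A_\text{min}=\frac{1}{2}-3g$), not $\widetilde{\HFK}(K,2g)$; this is harmless, because the extremal rank does not depend on the number of basepoints.
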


\begin{proof}
    $\rank(\SFH(Y(S),\gamma(S)))=\rank(\widetilde{{\HFK}}(-K_{\tau^n} ,g(K)))$ by Juh\'asz's decomposition formula~\cite[Theorem 1.5]{juhasz2008floer}. The result follows from Lemma~\ref{lem:inj} and the definition of $\widetilde{f}_{\tau^n}$.
\end{proof}

We now modify $\mathcal{H}_1(-K_{\tau^n})$ to obtain $\mathcal{H}_2(-K_{\tau^n})$ as described in Section~\ref{sec:Heegaarddiagrams}. This requires only $\beta$-handle slides. For each handle-slide we count pseudo-holomorphic triangles as in Figure~\ref{fig:standardtraingles}. We call the composition of all of these maps $g_*$.

\begin{lemma}\label{lem:h1toh2}
     $g_*:\widetilde{\HFK}(\mathcal{H}_1(-K_{\tau^n}))\to \widetilde{\HFK}(\mathcal{H}_2(-K_{\tau^n}))$ is a graded isomorphism that sends contact class like elements to contact class like elements.
\end{lemma}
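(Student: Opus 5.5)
The map $g_*$ is a composition of the triangle maps associated to the individual $\beta$-handle slides taking $\mathcal{H}_1(-K_{\tau^n})$ to $\mathcal{H}_2(-K_{\tau^n})$. I will treat one handle slide at a time and then compose, checking two things for each slide: (i) the induced map is a graded isomorphism, and (ii) it sends contact class like elements to contact class like elements.

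For (i) I will invoke the standard invariance package. Each slide of a $\bm{\beta_S}$-curve over a $\widehat{\bm{\beta}}$-curve is performed in a neighborhood of the arc from $w_{4g}$ to $z_2$, which avoids all basepoints. It can therefore be realized by a small Hamiltonian isotopy of $\bm{\beta}$ to $\bm{\beta}'$ followed by the slide, and the map $f_{\bm{\theta}}$ with $\bm{\theta}$ the top-graded generator of $\widetilde{\HFK}(\mathcal{H}_{\bm{\beta},\bm{\beta}'})$ is an isomorphism by the usual argument of Ozsv\'ath--Szab\'o and Juh\'asz (associativity plus the computation for a connected sum of $S^1\times S^2$'s). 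Admissibility of every intermediate diagram follows as in the admissibility lemma of Section~\ref{subsec:admissibility}, since the basepoints near the $c_i,\widehat{c}_i$ are unchanged. Because $\bm{\theta}$ is chosen in the top Maslov grading and in Alexander grading zero, the map preserves the Alexander grading and shifts the Maslov grading by the standard amount, which is zero for such slides. Hence each map is bigraded, and it respects the filtration in particular at $A_\text{min}$.

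The substantive step is (ii), which I would prove by a local analysis near each $c_i$ and $\widehat{c}_i$, modeled on Figure~\ref{fig:standardtraingles}. Take a contact class like generator $\bm{e}\cup\bm{c}$ and consider a pseudo-holomorphic triangle in $\pi_2(\bm{e}\cup\bm{c},\bm{\theta},\bm{y})$ with zero multiplicity at all basepoints. I want to show:
\begin{itemize}
\item the shadow near each $c_i$ has $D_1=D_3=D_5=\dots=0$, so that the output corner is forced to be the nearest point $c_i'$ (the image of $c_i$);
\item the corresponding triangle is a small embedded triangle, as in the argument of Baldwin--Vela-Vick and Honda--Kazez--Mati\'c.
\end{itemize}
The adjacent regions contain $z$ or $w$ basepoints, so their multiplicities vanish. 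Combining this with the rule that multiplicities of regions separated by curves change by the corner conditions, every multiplicity in Figure~\ref{fig:standardtraingles} except the orange triangle is forced to be zero. The output generator is then $\bm{e}'\cup\bm{c}'$. On the complementary piece, which is a copy of $\mathcal{H}(-S)$, the count reduces to the triangle map for a small isotopy of $\bm{\beta_G}$. That map is the nearest-point identification, or at least a map carrying $\CF(\mathcal{H}(-S))$ to itself. The image is therefore in the image of $\widetilde f_{\tau^n}$ for the new diagram, which is exactly contact class like.

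I expect the main obstacle to be justifying that triangles cannot leave $\Sigma_S$ and wrap around through the twisting region or $\Sigma_{P,1}$, as the disks in Lemma~\ref{lem:inj} did. Handle slides over $\widehat{\beta}$ curves make the slid curves pass near $w_{4g},z_1,w_1,z_2$. I would first try a multiplicity-chasing argument around $\partial\Sigma_S$, like the one in Lemma~\ref{lem:inj}: the region containing $z_1$ has multiplicity zero, and propagating around the boundary forces every corner at a $c_i$ or $\widehat{c}_i$ to be the small triangle. Any residual ambiguity would be handled by the Maslov index, since the small triangles already have index $0$ and any extra periodic domain would raise the index or hit a basepoint.
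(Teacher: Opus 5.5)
Your proposal follows the same route as the paper's proof: the isomorphism comes from standard handleslide invariance via triangle counts, and contact class like elements are preserved because of the local picture of Figure~\ref{fig:standardtraingles} near each $c_i$ (and $\widehat{c}_i$). The global multiplicity chase you anticipate is unnecessary, because positivity together with the corner conditions at $c_i$, $\theta_i$, $c_i'$ and the basepoint-containing quadrants already forces the output generator to contain each $c_i'$, whatever the triangle does elsewhere.
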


\begin{proof}
   Recall from the proof of the independence of Heegaard Floer homology that handleslides induce isomorphisms via counts of holomorphic triangles subject to appropriate boundary conditions~\cite{ozsvath2004holomorphic}. These are exactly the triangles that our map $g_*$ is counting, so the fact that $g_*$ is a graded isomorphism follows.

   Near each $c_i$ we have a local picture as in Figure~\ref{fig:standardtraingles}, it follows that $g_*$ sends contact class like generators to contact class like generators.
\end{proof}

Now we repeatedly remove pairs of $\bm{\widehat{\alpha}}$- and $\bm{\widehat{\beta}}$-curves in $\mathcal{H}_2(-K_{\tau^n})$ to obtain $\widehat{\mathcal{H}}(-K_{\tau^n})$.

\begin{lemma}\label{lem:h2tohat}
    The map $h:\T_{\bm{\alpha}}\cap\T_{\bm{\beta}}\mapsto \T_{\bm{\alpha}}\cap\T_{\bm{\beta}}$ obtained by forgetting the elements in the curves $\widehat{\bm{\alpha}},\widehat{\bm{\beta}}$ extends linearly to a chain map from the span of the contact class like generators in $\widetilde{\CFK}(\mathcal{H}_2(-K_{\tau^n}),A_\text{min})$ to $\widehat{\CFK}(\widehat{\mathcal{H}}(-K_{\tau^n}),A_{\text{min}})$. Moreover, $h_*$ is an isomorphism on homology. In particular, $\widehat{\HFK}(\widehat{\mathcal{H}}(-K_{\tau^n}),A_{\text{min}})$ is generated by contact class like elements.
\end{lemma}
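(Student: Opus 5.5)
We argue in two steps: first that $h$ is a chain map on the span of the contact class like generators, and then that $h_*$ is an isomorphism by a rank count.

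\textbf{Step 1: $h$ is a chain map.} We compare differentials near $\partial\Sigma_S$, exactly as in the proof of Lemma~\ref{lem:chain}. A contact class like generator of $\widetilde{\CFK}(\mathcal{H}_2(-K_{\tau^n}),A_\text{min})$ has the form $\bm{e}\cup\bm{c}$, where $\bm{c}$ contains every $c_i$ and every $\widehat{c}_i$. We will check that the basepoint configuration around each $\widehat{c}_i$ survives the handleslides used to pass from $\mathcal{H}_1$ to $\mathcal{H}_2$. The $\widehat{\beta}$-curves are unchanged near $\widehat{c}_i$, and the slid $\beta_S$-curves only pass through the arc from $w_{4g}$ to $z_2$. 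Given this, each $\widehat{c}_i$ sits in a corner whose four adjacent regions contain basepoints or are forced to have zero multiplicity. Hence any Maslov index one disk out of, or into, a contact class like generator is constant at every $\widehat{c}_i$ and at every $c_i$.

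Such disks therefore have domains avoiding the $\widehat{\alpha}_i$ and $\widehat{\beta}_i$ curves. Deleting those curves, together with the basepoints other than $z_1$ and $w_{2g}$, turns them bijectively into disks in $\widehat{\mathcal{H}}(-K_{\tau^n})$ between the images under $h$. Conversely, a disk in $\widehat{\mathcal{H}}$ between contact class like generators of grading $A_\text{min}$ is constant on the $c_i$, by the same local picture, Figure~\ref{fig:standardtraingles}. It also has multiplicity zero in the small regions that the $\widehat{\alpha}$ and $\widehat{\beta}$ curves cut out, so it lifts. Holomorphic representatives are identified because the domains agree; if we want to be careful, a neck-stretching along $\partial\Sigma_S$ of the kind deferred to Subsection~\ref{subsec:neck} makes this rigorous.

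We also need the span of contact class like generators to be a subcomplex in grading $A_\text{min}$. In $\mathcal{H}_2$ this follows from Proposition~\ref{prop:tildeisiso} combined with Lemma~\ref{lem:h1toh2}. Alternatively, since the differential preserves the Alexander grading, Equation~\ref{eq:Alexandergrading} forces every generator in grading $A_\text{min}$ to have all its $\widehat{\alpha}$ and $\widehat{\beta}$ points inside $\Sigma_S$, and these points must be the $\widehat{c}_i$.

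\textbf{Step 2: $h_*$ is an isomorphism.} On generators, $h$ sends $\bm{e}\cup\bm{c}$ to $\bm{e}\cup\{c_i\}$, so $h\circ(\text{chain-level }g\text{-image})$ agrees with $f_{\tau^n}$ up to the identification of $\bm{e}$. On homology this gives $f_{\tau^n}^*=h_*\circ g_*\circ\widetilde{f}_{\tau^n}^*$ restricted to grading $A_\text{min}$. By Proposition~\ref{prop:tildeisiso} and Lemma~\ref{lem:h1toh2}, the span of contact class like generators in $\mathcal{H}_2$ computes a group of rank $\rank\SFH(Y(S),\gamma(S))$. By Juh\'asz's decomposition formula, $\widehat{\HFK}(-K_{\tau^n},A_\text{min})$ has the same rank. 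So once $h_*$ is shown to be injective or surjective, it is an isomorphism.

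For surjectivity, Step 1 shows that $h$ identifies the source subcomplex with the subcomplex of $\widehat{\CFK}(\widehat{\mathcal{H}}(-K_{\tau^n}),A_\text{min})$ spanned by the generators $\bm{e}\cup\{c_i\}$; this map is an isomorphism of complexes. By Equation~\ref{eq:Alexandergrading}, every generator of $\widehat{\CFK}$ in grading $A_\text{min}$ uses only points of $\Sigma_S$ on the $\alpha_S$ and $\beta_S$ curves. The $c_i$ are the only intersections of these curves in $\Sigma_S$, except for the new intersections created in the arc from $w_{4g}$ to $z_2$ by the handleslides, which lie in $\Sigma_S$ near the boundary.

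\textbf{Main obstacle.} The hard part is ruling out these extra generators, or showing they contribute nothing to homology. The first approach is to show that all such points lie in the slid region, where the two basepoints force the relevant generators to pair up or to land in a different Alexander grading; one would then deduce that the whole complex equals the contact class like subcomplex. If that fails, the fallback is the rank count above: it shows the quotient complex is acyclic provided $h_*$ is injective, and injectivity would be proved by repeating the multiplicity argument of Lemma~\ref{lem:inj} in $\widehat{\mathcal{H}}$.
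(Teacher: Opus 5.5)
Your Step 1 (the chain map property via the basepoints near the $c_i$ and $\widehat{c}_i$) and the rank-count reduction at the start of Step 2 match the paper. The gap is that you never prove $h_*$ is injective or surjective, and that is the actual content of the lemma. Your ``Main obstacle'' paragraph leaves both routes open.

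Neither fallback works as stated. Classifying and cancelling the extra $A_\text{min}$ generators of $\widehat{\mathcal{H}}(-K_{\tau^n})$ created by the handleslides is not carried out. Rerunning the multiplicity argument of Lemma~\ref{lem:inj} inside $\widehat{\mathcal{H}}(-K_{\tau^n})$ would not go through directly. That argument forces zero multiplicities by propagating around $\partial\Sigma_S$ through the $4g$ pairs of basepoints and the small $\widehat{\alpha}_i,\widehat{\beta}_i$ circles, and it still needs the neck-stretching of Proposition~\ref{prop:neck}. All of that local control is deleted in the doubly pointed diagram.

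There is also a smaller problem in Step 1. Your claim that the contact class like span is a subcomplex of $\widetilde{\CFK}(\mathcal{H}_2(-K_{\tau^n}),A_\text{min})$ cites Proposition~\ref{prop:tildeisiso} and Lemma~\ref{lem:h1toh2}, but those are homology-level statements and give nothing at chain level. Your alternative, that the points on the $\widehat{\alpha}$- and $\widehat{\beta}$-curves must be the $\widehat{c}_i$, is unjustified for the reason you yourself give in Step 2: the handleslides create new intersections inside $\Sigma_S$ near $\partial\Sigma_S$. The correct justification is the same local basepoint argument you use for the chain map property.

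The missing idea is the paper's chain-level inverse, which makes the extra generators irrelevant. Every generator of $\widehat{\CFK}(\widehat{\mathcal{H}}(-K_{\tau^n}),A_\text{min})$, contact class like or not, can be completed to a generator of $\widetilde{\CFK}(\mathcal{H}_2(-K_{\tau^n}),A_\text{min})$ by appending the distinguished point $\widehat{c}_i\in\widehat{\alpha}_i\cap\widehat{\beta}_i$ for each $i$. Extending linearly gives a map $s$ back to $\widetilde{\CFK}(\mathcal{H}_2(-K_{\tau^n}),A_\text{min})$, which the paper asserts is a chain map by the positioning of the basepoints near the $\widehat{c}_i$. Moreover, $s\circ h$ is the identity on contact class like generators. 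Hence $s_*\circ h_*$ is the map induced by including the contact class like span, which is an isomorphism on homology by Proposition~\ref{prop:tildeisiso}, Lemma~\ref{lem:h1toh2} and a rank count, so $h_*$ is injective. Your rank count via Juh\'asz's decomposition formula then gives surjectivity.
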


\begin{proof}
The fact that $h$ is a chain map follows immediately from the positioning of the basepoints near the $c_i$ intersection points. Observe that there is in fact a unique element of $\widehat{\alpha}_i\cap\widehat{\beta}_i$ that is contained in any generator in $\T_{\bm{\alpha}}\cap\T_{\bm{\beta}}$ of the correct Alexander grading. $h$ has a right inverse obtained by the map obtained as the linear extension of the map given by appending this intersection point to any generator of $\T_{\bm{\alpha}}\cap\T_{\bm{\beta}}$. The positioning of the basepoints implies that this is a chain map. It follows that $h_*$ is injective. Surjectivity on homology once again follows Juh\'asz's decomposition formula~\cite{juhasz2008floer}. Thus we have the desired isomorphism in particular, $\widehat{\HFK}(\widehat{\mathcal{H}}(-K_{\tau^n}),A_{\text{min}})$ is freely generated by contact class like elements.
\end{proof}

We can now prove Proposition~\ref{prop:hatisiso}.

\begin{proof}[Proof of Proposition~\ref{prop:hatisiso}]
    $\hat{f}_{\tau^n}$ is a chain map that has the same image as the composition of chain maps $h\circ g\circ\tilde{f}_{\tau^n}$. Since $\tilde{f}_{\tau^n}$, $g$, and $h$ induce isomorphisms on homology by Proposition~\ref{prop:tildeisiso}, Lemma~\ref{lem:h1toh2} and Lemma~\ref{lem:h2tohat} respectively, it follows that  $\hat{f}_{\tau^n}$ also induces an isomorphism on homology, as desired. Note, moreover, that $\widehat{f}_{\tau^n}$ and $h\circ g\circ\tilde{f}_{\tau^n}$ map every element in $\CF(\mathcal{H}(-S))$ to a contact class like element in $\widehat{\HFK}(\widehat{\mathcal{H}}(-K_{\tau^n}))$, so the result follows.
\end{proof}

\subsection{Neck Stretching}\label{subsec:neck}
In this subsection we fill the gap in the proof of Lemma~\ref{lem:inj}. The proof of this proposition is rather involved, requiring the use of a neck-stretching argument. For a more detailed description of this procedure we refer the reader to~\cite[Section 12]{Lipshitzcylindrical}. To apply this technique we begin with a small digression into Lipshitz's cylindrical reformulation of Heegaard Floer homology~\cite{Lipshitzcylindrical}.

In the cylindrical setting, the Heegaard Floer differential counts pseudo-holomorphic curves in the four manifold $W:=\Sigma\times[0,1]\times \R$ subject to various technical requirements. To state them, we set some notation: let $\mathbb{D}$ denote the $[0,1]\times\R$ factor of $\Sigma\times[0,1]\times\R$, and let $\pi_\R:W\to \R$, $\pi_\mathbb{D}:W\to\mathbb{D}$, and $\pi_\Sigma:W\to\Sigma$ denote projections onto the relevant factors of $W$. The cylindrical differential then counts pseudo-holomorphic maps $u:S\to W$ where $S$ is a surface with boundary and $2g$ punctures on $\partial S$, $\{p_1,\dots,p_g,q_1,\dots q_g\}$ and the following conditions are satisfied.

\begin{enumerate}
    \item[\textbf{M0}] The source is smooth.
    \item[\textbf{M1}] $u(\partial S)\subset C_\alpha\cup C_\beta$, where here $C_\alpha:=\bm{\alpha}\times\{0\}\times\R$, $C_\beta:=\bm{\beta}\times\{1\}\times\R$.
    \item[\textbf{M2}] The energy (as defined in~\cite[Section 5.3]{compactnesssymplecticfieldtheory}) of $u$ is finite.
    \item[\textbf{M3}] For each $i$, $u^{-1}(\alpha_i\times\{1\}\times\R)$ and  $u^{-1}(\beta_i\times\{0\}\times\R)$ consist of exactly one component of $\partial  S\setminus\{p_1,\dots,p_g,q_1,\dots q_g\}$.
    \item[\textbf{M4}]$\lim_{w\to p_i}\pi_\R\circ u(w)=-\infty$ and $\lim_{w\to q_i}\pi_\R\circ u(w)=\infty$.\item[\textbf{M5}] There are no components on which $\pi_\mathbb{D}\circ u$ is constant.
\end{enumerate} Let $\phi\in\pi_2(\bm{x},\bm{y})$. We let $\mathcal{M}(S,\phi)$ denote the moduli-space of pseudo-holomorphic curves of homology class $\phi$ with source $S$. Let $p$ be a point in $\Sigma$. Let $\rho^p:\mathcal{\phi}\to\Sym^{n_p}(\D)$ be the map $u\mapsto (\pi_\D\circ u)\circ(\pi_\Sigma\circ u)^{-1}(p)$
Suppose $\phi\in\pi_2(\bm{x},\bm{y})$ is a homology class and $p\in\Sigma\setminus(\bm{\alpha}\cup\bm{\beta})$. Let $X$ denote the smooth manifold avoiding the fat diagonal, i.e. the points in $\Sym^{n_p}(\D)$ at least two elements of which agree. We consider the matched moduli space $\mathcal{M}(S,\phi,X):=\{u\in\mathcal{M}(S,\phi):\rho^p(u)\in X\}$.

\begin{proposition}[{\cite[Proposition 2.6]{zemkedualitymapping}}]\label{prop:indexformula}
If $X\in\Sym^n(\mathbb{D})$ is a submanifold which avoids the fat diagonal, then $\mathcal{M}(S,\phi,X)$ is a smooth manifold of dimension \begin{align}
    \ind(u)=\mu(\phi)-2\sing(u)-\codim(X)
\end{align} near any curve $u$ satisfying conditions \textbf{M0} through \textbf{M5}.
\end{proposition}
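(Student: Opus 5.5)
This is the standard index formula for matched moduli spaces in Lipshitz's cylindrical setting, and I would prove it by transversality for the evaluation map $\rho^p$ restricted to a neighborhood of $u$.

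\textbf{Step 1: the unconstrained moduli space.} First recall Lipshitz's result~\cite{Lipshitzcylindrical}. For a generic family of almost complex structures on $W$ satisfying his axioms, the moduli space $\mathcal{M}(S,\phi)$ of curves satisfying \textbf{M0}--\textbf{M5} is a smooth manifold near any $u$. Its dimension is the Fredholm index of the linearized $\overline{\partial}$-operator $D_u$ (on sections of $u^*TW$ with totally real boundary conditions along $C_\alpha\cup C_\beta$, modulo reparametrization). Lipshitz identifies this index with $\mu(\phi)-2\sing(u)$. Here $\sing(u)$ is the correction coming from double points and branch behavior of $u$: equivalently, by his formula $\ind=e(S)-g+2e(\phi)$ compared with $\mu(\phi)=n_{\bm x}(\phi)+n_{\bm y}(\phi)+e(\phi)$, the discrepancy is twice the count of singularities. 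So for generic $J$ one has $\dim_u\mathcal{M}(S,\phi)=\mu(\phi)-2\sing(u)$.

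\textbf{Step 2: the evaluation map and the constraint.} Since $p\notin\bm\alpha\cup\bm\beta$ and $\pi_\Sigma\circ u$ is a branched cover near $p$ of degree $n_p(\phi)$, the preimage $(\pi_\Sigma\circ u)^{-1}(p)$ varies smoothly with $u$ wherever its points are distinct. This makes $\rho^p$ a smooth map into $\Sym^{n_p}(\D)$ on the open set of curves whose image under $\rho^p$ avoids the fat diagonal. That set contains $u$ whenever $\rho^p(u)\in X$, because $X$ avoids the fat diagonal. Then $\mathcal{M}(S,\phi,X)=(\rho^p)^{-1}(X)$, and it suffices to show that $\rho^p$ is transverse to $X$ for generic $J$. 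The preimage theorem then gives a smooth manifold of dimension $\mu(\phi)-2\sing(u)-\codim(X)$.

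\textbf{Step 3: transversality (main obstacle).} I would run the usual universal moduli space argument. Allow $J$ to vary in a Banach space of perturbations supported near $\{p\}\times\D$, or near $\pi_\Sigma^{-1}$ of a small disk around $p$. I would then show that the universal evaluation map $(u,J)\mapsto\rho^p(u)$ is a submersion at every point.

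Distinctness of the points of $\rho^p(u)$ is what makes this possible. Near each preimage point one can perturb $J$ locally, as in the somewhere-injectivity arguments of~\cite[Section 3]{Lipshitzcylindrical}. Each preimage point is a point where the projection to $\Sigma$ is a local diffeomorphism onto a neighborhood of $p$, so the curve is injective there. The effect of the perturbation is to move that preimage point independently of the others in $\D$, in any direction. This gives surjectivity onto $T\Sym^{n_p}(\D)=\bigoplus T\D$ at $\rho^p(u)$.

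Sard--Smale applied to the projection of $(\rho^p_{\mathrm{univ}})^{-1}(X)$ to the space of $J$ then yields a residual set of $J$ for which $\rho^p$ is transverse to $X$. The delicate points are checking that such local perturbations preserve Lipshitz's conditions on $J$ (tameness, and being split near the boundary and the cylindrical ends) and that they remain compatible with \textbf{M5}. I expect this to go through exactly as in Zemke's treatment of matched moduli spaces, since $p$ lies in the interior of a region away from the Lagrangians.
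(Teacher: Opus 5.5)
Your architecture is the standard one behind the cited result; the paper gives no argument of its own, only the reference to Zemke. It runs as follows: Lipshitz's transversality and index formula for $\mathcal{M}(S,\phi)$; smoothness of $\rho^p$ near $u$, because $X$ avoids the fat diagonal, so every intersection of $u$ with $\{p\}\times\D$ has multiplicity one and $\pi_\Sigma\circ u$ is a local diffeomorphism there; a universal evaluation map with Sard--Smale; and the preimage theorem. You are also right that the statement tacitly requires $J$ to be generic. The problem is Step~3. It is the only non-formal step, and you justify it by a local mechanism that does not work as stated.

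\textbf{Locality.} Admissible $J$ are $\R$-invariant with $J\partial_s=\partial_t$. A perturbation of $J$ near $u(q_i)$ is therefore supported near the whole line $\{p\}\times\{s_i\}\times\R$, so it is not local on the curve. If two preimage points $q_i,q_j$ have the same $s$-coordinate, it acts on both sheets at once, so ``moving each point independently'' is not automatic.

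\textbf{Ramification.} Suppose, as in the formulation where $\pi_\D\circ u$ is a branched cover, that $\pi_\D$ is required to be $J$-holomorphic. Then admissible perturbations act only on the $T\Sigma$-block. They cannot push the curve in the $\D$-direction; they move $\rho_i$ only by sliding the preimage point along its sheet, with derivative $d(\pi_\D\circ u)_{q_i}\circ d(\pi_\Sigma\circ u)_{q_i}^{-1}$. This derivative vanishes when $q_i$ is a ramification point of $\pi_\D\circ u$. Nothing in the hypotheses excludes that: avoiding the fat diagonal only controls ramification of $\pi_\Sigma\circ u$. So your mechanism does not give surjectivity onto $T\Sym^{n_p}(\D)$ there.

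\textbf{What is needed.} You need a genuine proof that the universal linearized evaluation map is onto, for instance by duality.
Suppose $(\eta,w)$ annihilates the image of $(\xi,Y)\mapsto(D_u\xi+\tfrac12 Y\circ du\circ j,\ d\rho^p(\xi))$. Then $D_u^*\eta$ is a sum of point masses at the $q_i$.
Orthogonality to all admissible $Y\circ du\circ j$ forces the $T\Sigma$-component of $\eta$ to vanish on an open set, at points where $u$ is injective in the appropriate $\R$-invariant sense, as in Lipshitz's transversality argument.
By unique continuation it then vanishes on $S\setminus\{q_i\}$, which kills $w_i$ at every unramified $q_i$.
At ramified $q_i$ you must instead use that deformations of the branched cover $\pi_\D\circ u$, with varying complex structure on $S$, move its critical values freely.
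Only after this does Sard--Smale give your residual set of $J$.

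\textbf{A minor slip in Step~1.} Lipshitz's index formula is $\ind(u)=g-\chi(S)+2e(\phi)$. Your version reverses the sign of $g-\chi(S)$: it gives $-1$ for an index-one rectangle, where $g=2$, $\chi(S)=1$, $e(\phi)=0$.
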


 A holomorphic curve whose boundary is mapped only to $C_\alpha$, is called a \emph{cylindrical $\bm{\alpha}$-degeneration.} \emph{Cylindrical $C_\beta$-degenerations} are defined similarly.

We fix some more notation. Consider a point $\bm{x}\in\T_{\bm{\alpha}}\cap\T_{\bm{\beta}}\cap\im(f_{\tau^n})$. Let $\bm{y}\in\T_{\bm{\alpha}}\cap\T_{\bm{\beta}}$ be another point. By the argument in the proof of proposition above the moduli space of Maslov index one curves from $\bm{y}$ to $\bm{x}$, $\mathcal{M}(\phi)$ splits as a disjoint union of $\mathcal{M}(\phi_1)\sqcup\mathcal{M}(\phi_2)\sqcup \mathcal{M}(\phi_3)\sqcup\mathcal{M}(\phi_4)$, where $\mathcal{M}(\phi_i)$ indicates the moduli space of Maslov index one curves defined in the proof of Lemma~\ref{lem:inj} (one at the end of each of the cases \textbf{1.a)}, \textbf{1.b)}, \textbf{2.a)}, \textbf{2.b)}). As standard, we let $\widehat{\mathcal{M}}(\phi_i)$ indicate the quotient of ${\mathcal{M}}(\phi_i)$ by the usual $\R$-action.

 We are now in a position to fill the gap in the proof of Lemma~\ref{lem:inj}. This is the only point in the paper where we will make use of the hypothesis that $K$ has $w$-avoiding exterior in an essential way.

\begin{proposition}\label{prop:neck}
    Let $\phi_i$ denote the four homology classes of curves defined in the proof of Lemma~\ref{lem:inj}. We have that; \begin{enumerate}
        \item  $|\widehat{\mathcal{M}}(\phi_1)|=|\widehat{\mathcal{M}}(\phi_3)|=0$,  while 
        \item $|\widehat{\mathcal{M}}(\phi_2)|\equiv|\widehat{\mathcal{M}}(\phi_4)|\mod 2$.   \end{enumerate}
\end{proposition}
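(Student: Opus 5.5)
The plan is to stretch the neck along the green curve of Figure~\ref{fig:protoHD}, and along the circle separating $\Sigma_G$ from $\Sigma_{P,2}$, in the sense of \cite[Section 12]{Lipshitzcylindrical}. The top piece then splits off from the part of $\Sigma$ containing $\Sigma_S\cup\Sigma_P$. Each stretched region is capped off with a disk carrying a marked point; on the $\Sigma_G$ side this marked point plays the role of the basepoint $w$ of the $w$-avoiding knot. In the limit, a Maslov index one curve in class $\phi_i$ degenerates into a matched pair $(u_G,u_0)$. Here $u_G$ is a curve in $\Sigma_G$ (capped) and $u_0$ is a curve in the capped-off complement. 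The matching condition is that $\rho^p(u_G)=\rho^p(u_0)$ at the marked point $p$ on the neck, as in the matched moduli spaces $\mathcal{M}(S,\phi,X)$ above. I would use Proposition~\ref{prop:indexformula} to split the index: $\ind(u_G)+\ind(u_0)-n_p$ equals the total index $1$, where $n_p$ is the multiplicity of $\phi_i$ at the neck.

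First I read off $n_p$ for each class from the explicit domains found in Cases 1a, 1b, 2a and 2b. For $\phi_1$ and $\phi_3$, the domain either has multiplicity zero along $\alpha_G,\beta_G$ where they enter $\Sigma_G$, or the $\Sigma_G$ part must be a Maslov index one disk in the $w$-avoiding diagram with multiplicity one at $w$. In the first case the curve is confined to a region whose moduli space is empty by a direct Riemann-mapping count. In the second case, the $w$-avoiding hypothesis says no such $\Sigma_G$ component exists in the relevant index. Positivity of domains then forces $u_G$ to be trivial or nonexistent, and for $\phi_1,\phi_3$ this contradicts the boundary conditions at $x_1$ and $x_2$. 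That proves $|\widehat{\mathcal{M}}(\phi_1)|=|\widehat{\mathcal{M}}(\phi_3)|=0$.

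For $\phi_2$ and $\phi_4$, the index count forces $u_G$ to be a trivial (constant-on-generators) piece, or a $\Sigma_G$-curve of multiplicity zero at $w$. In either case the limit moduli space is a fibre product of the same $\Sigma_G$-data with a genus-zero curve in the $\Sigma_S\cup\Sigma_P$ region. The two domains $\phi_2,\phi_4$ agree in $\Sigma_G$ and in the twisting region. They differ only near $\widehat{\alpha}_{4g-3}$, $x_1$, $x_2$, by the small rectangle/bigon exchanging $D,G$ (compare Equation~\ref{eq:reuse}). I would then show that the local curve counts in the capped region are each $1 \bmod 2$, by identifying both with the count of holomorphic polygons with one convex corner on a fixed planar domain. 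This gives $|\widehat{\mathcal{M}}(\phi_2)|\equiv|\widehat{\mathcal{M}}(\phi_4)| \pmod 2$. The standard gluing theorem identifies the stretched count with the original count for sufficiently long neck.

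The main obstacle is the last step. I must rule out degenerations involving cylindrical $\alpha$- or $\beta$-degenerations and boundary degenerations at the neck. I must also check that the $w$-avoiding hypothesis, which concerns index one disks in the $w$-avoiding diagram with the honest basepoint, controls the matched $\Sigma_G$-components, which are constrained only through $\rho^p$. To do this, I would first show via admissibility that $\codim(X)$ absorbs any extra index. This forces $u_G$ to have $\mu=1$ and $n_w=1$, which is exactly what $w$-avoidance excludes.
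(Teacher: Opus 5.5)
There is a genuine gap. It is in part (2), and it is made worse by an index error.

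\textbf{The index splitting.} When a class crosses the neck with multiplicity $n_p$, the matching takes place in $\Sym^{n_p}(\D)$, which has real dimension $2n_p$. So the correct splitting is $\mu(\phi_i)=\mu(\phi_{i,g})+\mu(\phi_{i,p})-2n_p(\phi_i)$, not $-n_p$. You also only need to stretch along the circle $c$ between $\Sigma_G$ and $\Sigma_{P,2}$; the green curve in Figure~\ref{fig:protoHD} is the curve for $\xi$ used later in Theorem~\ref{prop:generalrankbound}.

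Reading off the domains, all four classes have $n_p=1$. This gives $(\mu(\phi_{i,g}),\mu(\phi_{i,p}))=(1,2)$ for $\phi_1,\phi_3$ and $(2,1)$ for $\phi_2,\phi_4$.

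\textbf{Part (1).} With the correct splitting, your idea is the right one and needs no dichotomy. The $\Sigma_G$ piece is an index-one class with multiplicity one at the pinch point, which $w$-avoidance excludes. A matching constraint can only shrink an already empty moduli space.

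\textbf{Part (2).} For $\phi_2,\phi_4$, your claim that $u_G$ is trivial or has multiplicity zero at $w$ is wrong. Here $u_g$ is a nonconstant index-two curve through the pinch point, coupled to $u_p$ by $\rho^p(u_g)=\rho^q(u_p)$. The $w$-avoiding hypothesis says nothing about such a curve.

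Because of this coupling, your concluding step fails. Knowing that $\phi_{2,g}=\phi_{4,g}$ and $|\widehat{\mathcal{M}}(\phi_p^2)|=|\widehat{\mathcal{M}}(\phi_p^4)|=1$ does not give equal totals, since a fibre-product count is not the product of the factor counts. What is actually counted is $|\widehat{\mathcal{M}}(\phi_g,X_i)|$, the number of index-two curves $u_g$ with $\rho^p(u_g)\in X_i=\rho^q(\mathcal{M}(\phi_p^i))$. The loci $X_2$ and $X_4$ are different codimension-one subsets of $\D$, so there is no a priori reason these counts agree.

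\textbf{The missing idea.} The paper uses a cobordism argument adapted from Zemke's quasi-stabilization proof. Interpolate by loci $X_t$ from $X_2$ to $X_4$, and show that the one-dimensional space $\bigcup_t\widehat{\mathcal{M}}(\phi_g,X_t)$ has no ends other than $t=2$ and $t=4$:
\begin{itemize}
\item sphere bubbles and $\alpha$/$\beta$-degenerations are excluded by the basepoints;
\item annoying curves are excluded by the index;
\item strip breaking is excluded because a broken piece would be an index-one matched curve. Its matched moduli space has expected dimension $\mu-\codim(X)-2\sing\le 0$, yet it carries an action of a one-dimensional subgroup of $\PSL(2,\R)$.
\end{itemize}
Some argument of this kind, showing the matched count is independent of the matching locus, is indispensable. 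It, rather than the degeneration issues you flag, is the real obstacle in part (2).
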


Consider the curve $c$ as shown in Figure~\ref{fig:neck}. The idea of the proof is to relate the moduli spaces of holomorphic representatives in a complex structure with sufficiently stretched neck at $c$ with the moduli space of the limits and deduce the proposition from there. It turns out that in the limit the Heegaard diagram can be identified with the Heegaard diagram shown in Figure~\ref{fig:neck}.

 \begin{figure}
    \centering

\begin{subfigure}{0.45\linewidth}
\centering
   \begin{tikzpicture}


\draw[ thick]
(3,3) -- (3,1)
.. controls (3,0) and (3,-3) ..
(1,-3) -- (-1,-3)
.. controls (-3,-3) and (-3,0) ..
(-3,1) -- (-3,3) .. controls (-3,5) and (3,5) .. (3,3);

\draw[ thick](1,3)
.. controls (1,2) and (-1,2) ..
(-1,3) ;
\draw[ thick](1,3)
.. controls (1,4) and (-1,4) ..
(-1,3) ;

\draw[pink, thick,dashed] (3,3) 
.. controls (2.5,3.5) and (1.5,3.5) ..
(1,3);
\draw[pink, thick] (3,3) 
.. controls (2.5,2.5) and (1.5,2.5) ..
(1,3);

\draw[pink, thick, dashed] (-1,3) 
.. controls (-1.5,3.5) and (-2.5,3.5) ..
(-3,3);
\draw[pink, thick] (-1,3) 
.. controls (-1.5,2.5) and (-2.5,2.5) ..
(-3,3);;

\draw[draw=black, thick] (0,-2) ellipse (0.5 and 0.3);

\draw[draw=blue, thick] (0.2,0) ellipse (1 and 0.5);

\draw[draw=red, thick] (1.5,0) ellipse (1 and 0.5);


\draw[red, thick] (0,-1.7)
.. controls (-0.3,-1) and (-0.3,0) .. (0,2.25);
\draw[red, thick, dashed] (0,-1.7)
.. controls (0.3,-1) and (0.3,0) .. (0,2.25);

\draw[blue, thick] (0.2,-1.7)
.. controls (0.2,-1) and (-2.5,0) .. (0.2,2.25);
\draw[blue, thick, dashed] (0.2,-1.7)
.. controls (0.2,-1) and (0.4,-1) .. (3,-0.5);

\draw[blue, thick] (3,-0.5)
.. controls (1,-1) and (1,1) .. (3,0.5);

\draw[blue, thick, dashed] (0.2,2.2)
.. controls (0.2,1) and (0.4,1) .. (3,0.5);


        \node at (2,-2) {\(\Sigma_S \)};

             \node at (1,4) {\(\Sigma_G \)};

        \node at (2.5,2) {\(\Sigma_{P,2} \)};

         \node at (-1,2) {\(\Sigma_{P,1} \)};

              \node at (-0.5,0) {\(w \)};
                  \node at (1,0) {\(z \)};
                   \node at (2,0) {\(w \)};

                   \node[pink] at (2,2.8) {\(c \)};

\end{tikzpicture}
\caption{}\label{subfig:neck1}
\end{subfigure}
\hfill\begin{subfigure}{0.45\linewidth}
\centering
   \begin{tikzpicture}


\draw[ thick]
(3,2.5) -- (3,1)
.. controls (3,0) and (3,-3) ..
(1,-3) -- (-1,-3)
.. controls (-3,-3) and (-3,0) ..
(-3,1) -- (-3,3) .. controls (-3,5) and (3,5) .. (3,3.5)
.. controls (3,3) and (1,3) .. (1,3.5)
.. controls (1,4) and (-1,4) .. (-1,3)--(-1,2.5)
.. controls (-1,2) and (1,2) .. (1,2.5)

.. controls (1,3) and (3,3) .. (3,2.5)
;


\draw[pink, thick, dashed] (-1,3) 
.. controls (-1.5,3.5) and (-2.5,3.5) ..
(-3,3);
\draw[pink, thick] (-1,3) 
.. controls (-1.5,2.5) and (-2.5,2.5) ..
(-3,3);;

\draw[draw=black, thick] (0,-2) ellipse (0.5 and 0.3);

\draw[draw=blue, thick] (0.2,0) ellipse (1 and 0.5);

\draw[draw=red, thick] (1.5,0) ellipse (1 and 0.5);


\draw[red, thick] (0,-1.7)
.. controls (-0.3,-1) and (-0.3,0) .. (0,2.15);
\draw[red, thick, dashed] (0,-1.7)
.. controls (0.3,-1) and (0.3,0) .. (0,2.15);

\draw[blue, thick] (0.2,-1.7)
.. controls (0.2,-1) and (-2.5,0) .. (0.2,2.15);
\draw[blue, thick, dashed] (0.2,-1.7)
.. controls (0.2,-1) and (0.4,-1) .. (3,-0.5);

\draw[blue, thick] (3,-0.5)
.. controls (1,-1) and (1,1) .. (3,0.5);

\draw[blue, thick, dashed] (0.2,2.15)
.. controls (0.2,1) and (0.4,1) .. (3,0.5);


        \node at (2,-2) {\(\Sigma_S \)};

             \node at (1,4) {\(\Sigma_G \)};

        \node at (2.5,1.5) {\(\Sigma_{P,2} \)};

         \node at (-1,2) {\(\Sigma_{P,1} \)};

              \node at (-0.5,0) {\(w \)};
                  \node at (1,0) {\(z \)};
                   \node at (2,0) {\(w \)};

                        \filldraw[black] (2,2.7) circle (2pt);
                      \node at (2,2.4) {\(p \)};
                      
                        \filldraw[black] (2,3.3) circle (2pt);
                           \node at (2,3.6) {\(q \)};

\end{tikzpicture}
\caption{}\label{subfig:neck2}
\end{subfigure}

    \caption{The process of neck stretching replaces pseudo-holomorphic curves in a $4$-manifold obtained from the left hand Heegaard diagram with one obtained from the right hand Heegaard diagram. We suppress the subscripts on the $z$ and $w$-basepoints, since they are unimportant.} \label{fig:neck}
\end{figure}

\begin{proof}
Consider a sequence of almost complex structures $J(T_k)$ with $T_k\to\infty$. Observe that in the limit the homotopy class $\phi$ decomposes as a sum of homotopy classes $\phi_{i,g}+\phi_{i,p}$, where $\phi_{i,g}$ is the component of $\phi_i$ that lives in the Heegaard diagram for the top piece --- $\Sigma_G$ with one of its two boundary components pinched to a point --- and $\phi_{i,p}$ is the component of $\phi$ that lives outside of this region of the Heegaard diagram --- i.e. in $\Sigma_S\cup\Sigma_P$ with one of its boundary components pinched to a point.  
Observe that  \begin{equation}\label{eq:maslov}\mu(\phi_i)=1=\mu(\phi_{i,g})+\mu(\phi_{i,p})-2n_p(\phi_i).\end{equation}here $p$ is a point as shown in Figure~\ref{fig:neck} and $\phi_{i,g}$ and $\phi_{i,p}$ are the components of $\phi_i$ in the limit.

We first show that $|\widehat{\mathcal{M}}(\phi_1)|\equiv|\widehat{\mathcal{M}}(\phi_3)|=0$. To see this observe that $n_p(\phi_i)=1$, while $\mu(\phi_{i,p})=2$, so that $\mu(\phi_{i,g})=1$. The assumption on $\mathcal{H}$ coming from the requirement that $K$ has $w$-avoiding exterior rules out the existence of homotopy classes of disks with $\mu(\phi_{i,g})=1$, concluding the proof of the first part of the proposition.

We now proceed to the second part of the proposition: $|\widehat{\mathcal{M}}(\phi_2)|\equiv|\widehat{\mathcal{M}}(\phi_4)|\mod 2$. We will first show that the moduli space of each $\phi_i$ can be identified with the moduli space of pairs of curves satisfying a matching condition. Since the proof is same for each $i$, we suppress the index $i$ from our notation.  We readily see that $\mu(\phi_p)=1$, and $n_p=1$ so that $\mu(\phi_g)=2$.

    For any sequence of curves representing $\phi$ for the sequence of almost complex structures $J(T_k)$ with $T_k\to\infty$, we can extract limiting curves $U_g$ and $U_p$ such that $U_g$ and $U_p$ have components $u_g$ and $u_p$ which satisfy the matching condition $\rho^p(u_g)=\rho^q(u_p)$.  Here $q$ is the point shown in Figure~\ref{fig:neck}. Since $n_p=1$, this is an equality of sets with at most one element. This implies that $u_p$ and $u_g$ each have a single connected component.

We wish to rule out a number of possibilities of curves that can occur in the limit. Sphere bubbling cannot occur because our curves are required to avoid the basepoint, since we are working with the hat flavour of knot Floer homology; see Figure~\ref{fig:neck}. Similarly, cylindrical $\alpha$ or $\beta$-boundary degenerations cannot occur, because we are only interested in moduli spaces of curves which avoid the basepoints, and the unique connected region in the complement of $\alpha$ or $\beta$-curves contains a basepoint; see Figure~\ref{fig:neck}.

Let $S_G$ denote the source of the curve $u_g$, $\phi'_g$ denote the homotopy class of the image of the curve and $S_P$ denote the source of $u_p$. Set ${X:=\{\rho^p(u_p):u_p\in\mathcal{M}(\phi_p)\}\subset \Sym^n(\mathbb{D})}$. This has codimension $2n_p-1=1$. Note that $0\leq\mu(\phi'_g)\leq\mu(\phi_g)\leq 2$.

The curve $u_g$ satisfies conditions (M1)-(M5). All but condition M5 are readily verified. M5 holds because $u_g$ necessarily has boundary with punctures --- since we are working with the hat flavour of Heegaard Floer homology, so that $u_g$ cannot be closed or an $\alpha$ or $\beta$ boundary degeneration --- so in turn $u_g$ being locally non-constant is forced by (M4).  By Proposition~\ref{prop:indexformula} we then have that near $u_g$: \begin{align*}
\dim(\mathcal{M}(S_G,\phi_g',X))&=\mu(\phi_g')-\codim(X)-2\sing(u_g)\\&\leq 1-2\sing(u_g)\\&\leq 1.\end{align*}

On the other hand ${\dim(\mathcal{M}(S_g,\phi'_g,X))\geq 1}$, since $\D$ carries an action of a $1$-dimensional subgroup of $\PSL(2,\R)$ on the target preserving the matching condition and the other boundary conditions. Thus we have that $2\sing(u_g)=0$, so that $u_g$ is embedded, and $\mu(\phi'_g)=2$.  

We cannot have Maslov index $0$ connected components of $\phi_g$ since otherwise the expect dimension for the moduli space will be $0$ for example see \cite[Corollary 7.2]{Lipshitzcylindrical}, but the dimension should be at least one because of the action of a one dimensional subgroup of $\PSL(2,\R)$ as above. Thus, $\mu(\phi'_g)=\mu(\phi_g)$ implies that $u_g$ must be the only component of $U_g$. Similarly, as we have seen $\mu(\phi_p)=1$ implies $U_p$ must have one connected component $u_p$. By inspecting the Heegaard diagram we readily see that $\mathcal{M}(\phi_p)$ is one dimensional, with a single connected component.

Define $\mathcal{M}(\phi_g,X)$ to be $\{u\in\mathcal{M}(\phi_g):\rho^q(u)\in X\}$. We have constructed an injective map from $\mathcal{M}(\phi)$ to pairs of maps $\mathcal{M}(\phi_g,X)\times\mathcal{M}(\phi_p)\subset\mathcal{M}(\phi_g)\times\mathcal{M}(\phi_p)$; i.e. pairs of curves satisfying a matching condition. We wish to show this map is a bijection. Since the moduli spaces in question have a finite number of components it suffices to provide an injective map in the other direction. Such a map is provided to us by a result of Lipshitz.
If the almost complex structure achieves transversality at $u_p$ and $u_g$ --- which we may assume --- then there is a neighborhood $U$ of $(u_p,u_g)$ in the compactification of holomorphic disks in $\Sigma\times[0,1]\times\R$ such that $$U\cap\big(\{(u_p,u_g)\}\cup \bigcup_{T>0}\widehat{\mathcal{M}}_{J(T)}(\phi_i\big))$$ is diffeomorphic to $(0,1]$. This follows by modifying the proof of~\cite[Proposition A.2]{Lipshitzcylindrical}, just as is required to prove~\cite[Proposition A.3]{Lipshitzcylindrical}.

To conclude the proof of the Proposition we want to add the index $i$ back to our notation and verify that the size of the moduli space is independent of the matching. More precisely, observe that for each homology class $\phi_i$, the corresponding component $\phi_g$ is the same. On the other hand $\phi_p$ depends on $i$, so we denote it by $\phi_p^i$, but as we have seen $|\widehat{\mathcal{M}}(\phi_p^i)|=1$. Thus, to conclude the result we only need to show that $|\widehat{\mathcal{M}}(\phi_g,X_i)|$ is independent of $i$, where we recall that $X_i=\{\rho^{p}(u):u\in\mathcal{M}(\phi_p^i)\}$. This follows by adapting a corresponding argument in the proof of~\cite[Proposition 5.3]{zemkequasistabbasepoint}.

Set $X_i:=\{\rho^{p}(u):u\in\mathcal{M}(\phi_p^i)\}$ and $X_j:=\{\rho^{p}(u):u\in\mathcal{M}(\phi_p^j)\}$ respectively. Let $X_t$ be subsets of $\D$ interpolating between $X_i$ and $X_j$. Consider the one dimensional space $$\widehat{\mathcal{M}}(\phi_g,[i,j]):=\underset{t\in[i,j]}{\bigcup}\widehat{\mathcal{M}}(\phi_g,X_t).$$

 We wish to show the only boundary components are $\widehat{\mathcal{M}}(\phi_g,X_i)$ and $\widehat{\mathcal{M}}(\phi_g,X_j)$, so in turn that these moduli spaces are in bijection. The only other possible ends of this moduli space correspond to strip breaking, sphere bubbling, cylindrical $\alpha$ or $\beta$ boundary degenerations, or annoying curves (i.e. curves for which there is a non-empty open subset of $S$ for which $\pi_\mathbb{D}\circ u:S\to W$ is constant). Sphere bubbling and $\alpha$ and $\beta$ boundary degenerations cannot happen for the same reasons as earlier in the proof; the presence of basepoints prevents such behavior, since we are working with the hat flavour of Heegaard Floer homology. 

 To rule out annoying curves and strip breaking we can again proceed as before. Note that the Maslov index of any limit curve must be the same as the Maslov index of any curve in the interior of the moduli space i.e. two. To exclude strip breaking, note that since the Maslov index is non-negative and $0$ only for constant curves, there must be two components to the limit split curve, each of which has Maslov index one. Let $u'$ be the matched curve, $\psi'$ be its homology class and $S'$ be its source.  Now, appealing to Proposition~\ref{prop:indexformula} once more, we see that; \begin{align*}\dim(\mathcal{M}(S',\psi',X_i))&=\mu(\psi')-\codim(X_i)-2\sing(u')\\&\leq 1-1-0=0\end{align*} but $\mathcal{M}(S',\psi',X_i)$ carries an action of a one dimensional subgroup of $\PSL(2,\R)$ as above, so the dimension of $\mathcal{M}(S',\psi',X_i)$ is at least one, a contradiction. Annoying curves are ruled out because the Maslov index would also be pushed above two.
\end{proof}

\begin{remark}\label{rem:hypandgeneral}
  The hypothesis that $K$ has $w$-avoiding exterior is  only used in proving part $1$ of Proposition~\ref{prop:neck}. We do not know if this is necessary for the statement to be true, or indeed for the weaker claim that $|\widehat{\mathcal{M}}(\phi_1)|\equiv|\widehat{\mathcal{M}}(\phi_3)|\mod 2$, which would also have sufficed for our purposes.
\end{remark}

\subsection{Generators for arbitrary diffeomorphisms}\label{subsec:arbitrarydiffeos}

In this section we generalize Proposition~\ref{prop:hatisiso} to the case of arbitrary diffeomorphisms, namely;

\begin{proposition}\label{prop:satiatedimpliesgeneration}
    Suppose $K$ has $w$-avoiding exterior. Then $\widehat{\HFK}(-K,A_\text{min})$ is generated by contact class like generators for an appropriate choice of Heegaard diagram for $(G,\gamma)$ and basis of arcs.
\end{proposition}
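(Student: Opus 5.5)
The plan is to reduce to the boundary-twist case, Proposition~\ref{prop:hatisiso}, following the strategy Baldwin and Vela-Vick use for fibered knots (and Vela-Vick's argument for the LOSS-type generators). As recalled before Subsection~\ref{subsec:boundarytwists}, any monodromy $\psi$ can be written as $\tau^n$ post-composed with a product of negative Dehn twists $t_{\delta_1}^{-1}\cdots t_{\delta_m}^{-1}$ along simple closed curves $\delta_j\subset R_+(\gamma_0)$, for $n$ large. I would induct on $m$. The base case $m=0$ is Proposition~\ref{prop:hatisiso}. The inductive step should show: if $\widehat{\HFK}(-K_{\psi},A_\text{min})$ is generated by contact class like elements, then so is $\widehat{\HFK}(-K_{\psi'},A_\text{min})$ with $\psi'=t_\delta^{-1}\circ\psi$.

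\textbf{Step 1.} Choose the basis of arcs $\bm{a}$ (and hence $\bm{b}$) adapted to $\delta$. After an isotopy of $\delta$ into the $\Sigma_S$-side of the twisting region, arrange that $\delta$ meets only arcs $a_i$ lying in the product part $\Sigma_P$. The curves $\alpha_G,\alpha_s$ and the diagram $(\Sigma_G,\bm{\alpha_G},\bm{\beta_G})$ should be untouched. This is where the phrase ``appropriate choice of Heegaard diagram and basis of arcs'' enters, and where I use that $\delta$ lies in $R_+(\gamma_0)$, which is identified through the product piece.

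\textbf{Step 2.} The diagram for $K_{\psi'}$ is obtained from that for $K_{\psi}$ by replacing $\bm{\beta}$ with $\bm{\beta}'$, the image under a negative twist along a curve $\delta$ on $\Sigma$. Topologically, $(-M',-K_{\psi'})$ is obtained from $(-M,-K_\psi)$ by a surgery on a Legendrian-type push-off of $\delta$ disjoint from the knot. This gives a surgery exact triangle, realised by the triangle map $F\colon\widehat{\CFK}(\Sigma,\bm{\beta},\bm{\alpha})\to\widehat{\CFK}(\Sigma,\bm{\beta}',\bm{\alpha})$, or in the reverse direction as appropriate, obtained by counting triangles with a top generator $\bm{\theta}$ in the $(\bm{\beta},\bm{\beta}')$ diagram. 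Near each $c_i$ the local picture is Figure~\ref{fig:standardtraingles}. So, as in Lemma~\ref{lem:h1toh2}, the only triangles out of $\bm{e}\cup\bm{c}$ that avoid the basepoints are the small triangles, and $F(f_\psi(\bm{e}))=f_{\psi'}(\bm{e})$ modulo terms that cancel. Thus $F\circ f_\psi=f_{\psi'}$ at the chain level, restricted to the minimal Alexander grading; Lemma~\ref{lem:chain} ensures both are chain maps.

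\textbf{Step 3.} Conclude by a rank count. By Juh\'asz's decomposition formula, both $\widehat{\HFK}(-K_\psi,A_\text{min})$ and $\widehat{\HFK}(-K_{\psi'},A_\text{min})$ have the rank of $\SFH(Y(S),\gamma(S))$, since the Seifert surface exterior is unchanged. By induction $f_\psi^*$ is an isomorphism, so it suffices to show $f_{\psi'}^*$ is injective. The alternative, running the triangle in the other direction, is to show $F^*$ restricted to the minimal grading is an isomorphism; there one would use that the third term of the exact triangle is a knot Floer group of a knot whose Seifert surface exterior is the same but with $\delta$ compressible, so its $A_\text{min}$ part vanishes or injects.

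I expect Step 3, and the identification of the triangle map on the bottom grading, to be the main obstacle. A cleaner route, which I would try first, is to avoid exact triangles entirely and repeat the argument of Lemma~\ref{lem:inj} directly for $\psi'$. Twisting along $\delta\subset\Sigma_P$ does not change the region-multiplicity argument: the twisting region still forces every disk into $\bm{e}\cup\bm{c}$ to be constant on all $c_i$, except for the classes $\phi_1,\dots,\phi_4$ near $\Sigma_{P,1}$, and these are handled by Proposition~\ref{prop:neck}. That would give injectivity of $\widetilde{f}_{\psi'}^*$ directly. Surjectivity then follows by the rank count, and Lemmas~\ref{lem:h1toh2} and~\ref{lem:h2tohat} transfer the result to $\widehat{\mathcal{H}}(-K_{\psi'})$.
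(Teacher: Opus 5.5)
Your skeleton matches the paper's. You induct on the negative Dehn twists in $\psi=\tau^n\circ t_{\delta_1}^{-1}\cdots t_{\delta_m}^{-1}$, starting from the base case Proposition~\ref{prop:hatisiso}, and realize each twist by a triangle map $F$. The local picture of Figure~\ref{fig:standardtraingles} shows the triangles are constant at every $c_i$, so $F$ carries contact class like cycles to contact class like cycles.

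\textbf{The gap.} You do not prove the one step with real content: that $F^*$ is an isomorphism on Alexander grading $A_\text{min}$ (surjectivity would suffice). Your rank count only moves this to injectivity of $f^*_{\psi'}$, which you do not prove. The phrase ``vanishes or injects'' is not an argument.

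\textbf{The missing idea.} The third vertex of the exact triangle is the knot in the $0$-surgery on a push-off of $\delta$, with the framing induced by the surface. There the surgery disk compresses $S$ along $\delta$; take the $\delta_j$ non-separating, which is enough to factor $\psi$. So the knot bounds a genus $g-1$ surface in the same relative class. By the adjunction inequality, the third vertex then vanishes in Alexander grading $-g$, and exactness gives the isomorphism outright.

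\textbf{Consequences for your other steps.} With this in hand, two of your steps become unnecessary.
\begin{itemize}
\item Step~1 is not needed, which is fortunate because it is not available in general. A twist curve with nonzero algebraic intersection with the core of the annulus $R_+(\gamma)$ cannot be isotoped into the product part, and a general $\psi$ requires such twists.
\item The chain-level identity $F\circ f_\psi=f_{\psi'}$ is not needed either. The local analysis at the $c_i$ does not justify it anyway, since it says nothing about the part of the triangles in $\Sigma_G\cup\Sigma_P$.
\end{itemize}

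\textbf{Your preferred ``cleaner route'' would fail.} Rerunning Lemma~\ref{lem:inj} for $\psi'$ does not work, because that proof relies on $\mathcal{H}_1(-K_{\tau^n})$ being standard outside the twisting annulus $\nu(\eta)$ parallel to $\partial\Sigma_S$. Concretely, away from $\nu(\eta)$ each $b_i$ is a push-off of $a_i$. This is what does three jobs in that proof:
\begin{itemize}
\item it forces the large region of $\Sigma_{P,1}$ to have multiplicity zero;
\item it lets $\partial D(\phi)$ enter $\Sigma_{G\cup P}$ only along $a_s$;
\item it cuts the candidate classes down to $\phi_1,\dots,\phi_4$, whose specific shapes are what the neck-stretching in Proposition~\ref{prop:neck} uses.
\end{itemize}
A twist along an interior curve $\delta$ makes the $\bm{b}$ arcs run along $\delta$. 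This creates new intersections with $\bm{a}$, and with $\bm{\alpha_G}$ when $\delta$ enters $\Sigma_G$, in the interior of $\Sigma_{G\cup P}$. None of the deductions above survive this. That is precisely why the paper treats general monodromies through the exact triangle rather than directly.
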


In the next section, we will show that this conclusion holds for arbitrary choices of Heegaard diagram and basis of arcs.  We do not know if the Proposition fails without the assumption that $K$ has $w$-avoiding exterior. 

 Let $\rho$ denote a negative Dehn twist about a non-separating simple closed curve $c$ in $R_+(\gamma(S))$. Given a Heegaard diagram $\widehat{\mathcal{H}}(-K_{\psi})$ we can obtain a Heegaard diagram for $\widehat{\mathcal{H}}(-K_{\rho\circ\psi})$ by applying a right handed Dehn twist about $c$, viewed as a curve in $\Sigma'$.  We define a map $\rho_*:\widehat{\HFK}(\widehat{\mathcal{H}}(-K_\psi),A_\text{min})\to \widehat{\HFK}(\widehat{\mathcal{H}}(-K_{\rho\circ\psi}),A_\text{min})$ as a count of pseudo-holomorphic triangles as in~\ref{fig:triangle}.

\begin{proposition}\label{prop:generorsaftertwists}
 $\rho_*$ is an isomorphism. Moreover, if $\widehat{\HFK}(\mathcal{H}_\psi,A_\text{min})$ is generated by contact class like elements then so too is $\widehat{\HFK}(\mathcal{H}_{\rho\circ\psi},A_\text{min})$.
\end{proposition}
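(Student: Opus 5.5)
Applying a right-handed Dehn twist about $c\subset\Sigma'$ to the $\beta$-curves of $\widehat{\mathcal{H}}(-K_\psi)$ amounts to replacing $\bm{\beta}$ by a new collection $\bm{\gamma}$, where each $\gamma$-curve is the twisted image of the corresponding $\beta$-curve. I would compute $\rho_*$ as the triangle map $f_{\bm{\theta}}$ for a suitable cycle $\bm{\theta}\in\widehat{\CFK}(\mathcal{H}_{\bm{\beta},\bm{\gamma}})$. The goal is to recognise $\rho_*$ as the map of a surgery exact triangle (or of a cobordism) that is an isomorphism in the bottom Alexander grading.

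Recall that, in the $3$-manifold, a negative Dehn twist on the monodromy along a curve $c$ in the page corresponds to $(+1)$-surgery with respect to the page framing on a push-off of $c$; for $-M$ this becomes $(-1)$-surgery. Taking this push-off to lie in the Seifert surface $S$ (as a curve in $R_+$ near $\Sigma_S$), it is disjoint from $K$ and sits at the extreme level of the Seifert surface. The steps are then as follows.

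\begin{enumerate}
\item Write down the surgery exact triangle relating $-K_\psi$, $-K_{\rho\circ\psi}$, and $0$-surgery along $c$. All three maps are triangle counts, and the first of them is $\rho_*$.
\item Observe that the third term, in Alexander grading $A_\text{min}$, corresponds to a surface of smaller Thurston norm. Compressing $S$ along the disk created by $0$-surgery on the page-framed curve $c$ reduces the genus, so the knot in the $0$-surgered manifold has genus less than $g(K)$. Its knot Floer homology therefore vanishes in grading $A_\text{min}=-g(K)$.
\item Since Alexander gradings are preserved by the triangle maps, exactness gives that $\rho_*$ is an isomorphism in $A_\text{min}$.
\end{enumerate}

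For the second claim, I would argue locally, as in Lemma~\ref{lem:h1toh2}. Choose $\bm{\theta}$ to consist of the top-graded intersection points, and take $\gamma_i$ to be a small Hamiltonian push-off of $\beta_i$ inside $\Sigma_S$; this is possible because $c$ lies in $\Sigma'$, away from $\Sigma_S$. Near each $c_i$ the triangles then have the local shadow of Figure~\ref{fig:standardtraingles}, and the basepoint placement forces every contributing triangle out of $\bm{e}\cup\bm{c}$ to end at $\bm{e}'\cup\bm{c}'$. Hence $f_{\bm{\theta}}$ carries contact class like elements to contact class like elements, and after the obvious identification $\bm{c}'\leftrightarrow\bm{c}$ it restricts to a chain-level map on the image of $f_\psi$. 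Combined with surjectivity of $\rho_*$, this shows that if $\widehat{\HFK}(\mathcal{H}_\psi,A_\text{min})$ is generated by contact class like elements, then so is $\widehat{\HFK}(\mathcal{H}_{\rho\circ\psi},A_\text{min})$.

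The main obstacle is step 2, which has two parts. First, one must justify that the twist curve can be taken disjoint from $\Sigma_S$ and from the basepoints, so that the exact triangle is a triangle for the knot complex. Second, one must show rigorously that the third term vanishes in the extremal grading. For the vanishing, I would first try the adjunction-type argument of Ozsváth–Szabó~\cite{OSgenusbounds}: after surgery, $S$ compresses, so the grading $A_\text{min}$ lies below the genus bound. Should that fail, I would instead use the sutured decomposition formula~\cite{juhasz2008floer}: it identifies the bottom group with $\SFH$ of the Seifert surface exterior, and that exterior is unchanged by composing the monodromy with $\rho$. This gives equality of ranks, and equality of ranks together with injectivity, obtained from the local triangle argument, again yields an isomorphism.
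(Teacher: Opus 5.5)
Your proposal follows the paper's proof: $\rho_*$ sits in the surgery exact triangle whose third term is the $0$-surgery on the page-framed curve $c$, that term vanishes in $A_\text{min}$ because the Seifert surface compresses to lower genus, and the local basepoint analysis near each $c_i$ (Figure~\ref{fig:standardtraingles}) shows the triangle map sends contact class like elements to contact class like elements, which together with surjectivity gives the second claim. Your fallback via the decomposition formula is not needed, and the injectivity it would require is not actually supplied by the local triangle argument, but your main route is exactly the paper's.
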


Here we are abusing notation as usual by allowing $c_i$ to refer to distinct points in distinct Heegaard diagrams. We follow Baldwin and Vela-Vick's strategy in the proof of~\cite[Theorem 2.3]{baldwin_note_2018}. Similar techniques have been used by Honda-Kazez-Mati\'c; see, for example, the proof of~\cite[Section 3.4]{honda2006contact}.

\begin{proof}

   Observe that the map $\rho_*$ fits into the following surgery exact triangle:

\begin{equation*}
\centering
    \begin{tikzcd}      \widehat{\HFK}(\widehat{\mathcal{H}}(-K_\psi),A_{\text{min}})\ar[rr,"\rho^*"]&&\widehat{\HFK}(\widehat{\mathcal{H}}({-K_{\psi\circ \rho}),A_{\text{min}}})\ar[dl]\\&\widehat{\HFK}(\mathcal{H}_0,A_{\text{min}})\ar[ul]&
    \end{tikzcd}
    \end{equation*}

    Here $\mathcal{H}_0$ is a Heegaard diagram for the knot in the three manifold obtained by doing $0$-surgery on $c$ (with respect to the framing induced by $\Sigma$). This knot is of genus strictly less than that of $K_\psi$ and $K_{\psi\circ\rho}$, so that $\widehat{\HFK}(\mathcal{H}_0,A_\text{min})=0$. Now, $\rho^*$ is a triangle counting map for $\bm{\theta}$. Of course, this is well defined since the positioning of the basepoints imply that $\bm{\theta}$ is a cycle in $\widehat{\CFK}(\mathcal{H}_0)$. It suffices to show that the triangle counting map is constant on each $c_i$ and $\widehat{c}_i$. This follows directly from a careful consideration of the location of basepoints in a neighborhood of each $c_i$; see Figure~\ref{fig:standardtraingles}.\end{proof}

\begin{proof}[Proof of Proposition~\ref{prop:satiatedimpliesgeneration}]
 Recall that any diffeomorphism $R_+(\gamma(S))\to R_+(\gamma(S))$ can be obtained from $\tau^n$ by post composing with a sequence of negative Dehn twists about essential simple closed curves in $R_+(\gamma(S))$. The result now follows from Proposition~\ref{prop:hatisiso} and repeated applications of Proposition~\ref{prop:generorsaftertwists}
\end{proof}

 \begin{remark}\label{rem:generalhard}
    It would be natural to attempt to prove Conjecture~\ref{con:sivek} for knots admitting Seifert surfaces with more complicated exteriors; namely exteriors that can be decomposed along arbitrary families of product disks and product annuli (as opposed to exactly two product annuli as in Definition~\ref{def:avoidext}) into product and non-product sutured manifold pieces (more general than the sutured exteriors of $w$-avoiding knots). However, it is not clear to the authors how to generalize the proof of Proposition~\ref{prop:satiatedimpliesgeneration} to this setting.
\end{remark}

\subsection{Independence of Auxiliary choices.}\label{subsec:independence}

Throughout this section we have suppressed the dependence of our notation on the choice of basis of arcs for $R_+(\gamma_0)$ in our Heegaard diagram notation, as well as the choice of Heegaard diagram for $(Y,\gamma_0)$.
In this subsection, we show that this is justifiable. A stronger version of this independence will be used in Section~\ref{sec:nexttominimal}.

To that end we recall that in Definition $\ref{def: H(sigma,alphaG,betaG,a)}$ $\widehat{\mathcal{H}}(\Sigma_G,\bm{\beta_G},\bm{\alpha_G},\bm{{b}},\bm{a},\bm{\overline{a}})$ denotes the Heegaard diagram obtained in Section~\ref{subsubsec:HFhat}. 
To show that the property of $$\widehat{\HFK}(\widehat{\mathcal{H}}(\Sigma_G,\bm{\beta_G},\bm{\alpha_G},\bm{b},\bm{a},\bm{\overline{a}}),[-S],-g)$$ being generated by contact class like elements is independent of the choice of $(\Sigma_G,\bm{\beta_G},\bm{\alpha_G},\bm{{b}},\bm{a},\bm{\overline{a}})$ it suffices to show that the maps used in the proof of the invariance of Heegaard Floer homology send contact class like elements to contact class like elements. That is, we have to show the maps corresponding to the following moves send contact class like elements to contact class like elements:

\begin{enumerate}
    \item\label{item:arcslide} For a pair of arcs $\overline{a}_i,\overline{a}_j\in\bm{\overline{a}}$ with adjacent endpoints in $\partial\Sigma$ on the complement of the $z$ and $w$ sliding the arc $\overline{a}_i$ over $\overline{a}_j$ and the arc $\overline{b}_i$ over $\overline{b}_j$.
    \item\label{item:arccurveslide} Sliding an arc $a_i$ over a curve $\alpha_j\in\bm{\alpha_G}$.
      \item\label{item:arccurveslidebeta} Sliding an arc $b_i$ over a curve $\beta_j\in\bm{\beta_G}$.
    \item Sliding a curve $\alpha_1\in\bm{\alpha_G}$ over a curve $\alpha_2\in\bm{\alpha_G}$.
      \item Sliding a curve $\beta_1\in\bm{\beta_G}$ over a curve $\beta_2\in\bm{\beta_G}$.
    \item\label{item:stab} Isotopies of the $a_i$ arcs, $b_i$ arcs, as well as the curves in $\bm{\alpha_G}\cup\bm{\beta_G}$.
    \item\label{item:stabguts} Stabilizing $(\Sigma_G,\bm{\beta_G},\bm{\alpha_G})$.
\end{enumerate}

\begin{lemma}
Suppose the arcs $\bm{\overline{a}'}$ are obtained from the arcs $\bm{\overline{a}}$ by an arc slide. If contact class like elements generate $\widehat{\HFK}(\widehat{\mathcal{H}}(\Sigma_G,\bm{\beta_G},\bm{\alpha_G},\bm{{b}},\bm{a}, \bm{\overline{a}}),[-S],-g))$ then contact class like elements generate $\widehat{\HFK}(\widehat{\mathcal{H}}(\Sigma,\bm{\beta_G},\bm{\alpha_G},\bm{b},\bm{a}, \bm{\overline{a}'}),[-S],-g)$.
\end{lemma}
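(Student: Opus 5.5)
The arc slide of $\overline{a}_i$ over $\overline{a}_j$, together with the accompanying slide of $\overline{b}_i$ over $\overline{b}_j$, is realised on the closed Heegaard surface by two handleslides. First slide $\alpha_i$ over $\alpha_j$, and then slide $\beta_i$ over $\beta_j$. These are performed in a neighbourhood of the short arc of $\partial\Sigma_S$ joining the adjacent endpoints, which by hypothesis avoids $z_1$ and $w_{2g}$. So I would write the induced isomorphism as the composition of two triangle counting maps
\[
\Phi_\alpha:\widehat{\HFK}(\widehat{\mathcal{H}}(\ldots,\bm{\overline{a}}),[-S],-g)\to\widehat{\HFK}(\widehat{\mathcal{H}}_{\rm mid},[-S],-g)
\]
and $\Phi_\beta$ from the intermediate diagram to $\widehat{\mathcal{H}}(\ldots,\bm{\overline{a}'})$. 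Each map uses the top-graded generator $\bm{\theta}$ of the relevant $\#^{g'}(S^1\times S^2)$ diagram, and each preserves the Alexander grading $-g$. By invariance of Heegaard Floer homology under handleslides~\cite{ozsvath2004holomorphic}, both maps are isomorphisms. The intermediate diagram has the $\alpha$-curve $\alpha_i'=\alpha_i\#\alpha_j$. It still has distinguished intersection points $c_i'=\alpha_i'\cap\beta_i$ and $c_j$ in $\Sigma_S$, so the notion of contact class like element makes sense there as well.

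The key step is a local computation near each point of $\bm{c}$, modelled on Figure~\ref{fig:standardtraingles} and on the proofs of Lemma~\ref{lem:h1toh2} and Proposition~\ref{prop:generorsaftertwists}. Take a generator of the form $\bm{e}\cup\bm{c}$. I claim every holomorphic triangle counted by $\Phi_\alpha$ (respectively $\Phi_\beta$) with input $(\bm{e}\cup\bm{c})\otimes\bm{\theta}$ has output of the form $\bm{e}'\cup\bm{c}'$. Here $\bm{c}'$ denotes the distinguished points in the new diagram, and on the $\Sigma_G\cup\Sigma_P$ part the map agrees with the corresponding triangle map on $\mathcal{H}(-S)$; on $\Sigma_G$ this is just the small-triangle identity.

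To prove the claim, I would use the basepoint at the corner of each $c_k$, which forces the multiplicities in a neighbourhood of $c_k$ to vanish on one side. Then the triangle's shadow near $c_k$ is either constant or is exactly the small orange triangle from $\theta_k$ to $c_k'$. For the two curves involved in the slide, I would repeat this in the pair-of-pants region bounded by $\alpha_i,\alpha_j,\alpha_i'$. I would use the Alexander grading formula, Equation~\eqref{eq:Alexandergrading}: every point off $\Sigma_S$ raises the grading. Since the output must lie in grading $A_{\min}$, all output points on $\alpha_i'$, $\alpha_j$, $\beta_i$, $\beta_j$ must lie in $\Sigma_S$. Inside $\Sigma_S$, the only available points are the new $c$'s and the points created near the slide arc, and the latter are excluded by the basepoint positions.

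Given the claim, both $\Phi_\alpha$ and $\Phi_\beta$ carry the span of contact class like generators into the span of contact class like generators. Since these maps are isomorphisms on homology and the source is generated by contact class like classes by hypothesis, the image group $\widehat{\HFK}(\widehat{\mathcal{H}}(\ldots,\bm{\overline{a}'}),[-S],-g)$ is also generated by contact class like classes. This is the same surjectivity argument as in the proof of Proposition~\ref{prop:hatisiso}.

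I expect the main obstacle to be the local analysis near the slide arc. There the new curve $\alpha_i'$ meets $\beta_j$, and possibly $\beta_i$, at extra points inside $\Sigma_S$ near $\partial\Sigma_S$, and these could serve as output corners in grading $A_{\min}$. I would try to rule them out by a multiplicity bookkeeping argument like the one in Lemma~\ref{lem:inj}. Going around $\partial\Sigma_S$, the basepoints $z_1,w_{2g}$ together with the zero-multiplicity regions adjacent to the $c_k$ force any such triangle to have a negative coefficient somewhere. Such a triangle therefore has no holomorphic representative.
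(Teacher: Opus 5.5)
\textbf{Where the proposal falls short.} Your outline follows the route the paper has in mind: its proof simply defers to Honda--Kazez--Mati\'c's Lemma 3.5, which is likewise a handleslide triangle-map computation. Your final deduction is also fine: an isomorphism that carries contact class like cycles to contact class like cycles transfers generation to the target. However, the one step with real content, namely what happens at the slide arc, is left open, and the reasons you give for it do not apply.

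\textbf{Why your exclusion argument fails.} By the hypothesis of the move, the short arc of $\partial\Sigma_S$ between the two endpoints contains no basepoint, so no basepoint can exclude corners there. Equation~\eqref{eq:Alexandergrading} only penalises points \emph{outside} $\Sigma_S$.

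If the band runs along $\partial\Sigma_S$, then $\alpha_i\#\alpha_j$ has a strand just inside $\Sigma_S$. Each $\overline{b}_k$ is the push-off of $\overline{a}_k$ in the direction of the boundary orientation. So exactly one $\overline{b}$-endpoint lies between two adjacent $\overline{a}$-endpoints, namely the one belonging to the first of them, and the strand crosses that arc.

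When the endpoint of $\overline{a}_i$ comes first, the crossed arc is $\overline{b}_i$. Then $\alpha_i'\cap\beta_i\cap\Sigma_S$ consists of two points, $c_i'$ and a new point $e$. Hence $\{e,c_j,\ldots\}\cup\bm{e}$ is a genuine generator in the bottom Alexander grading of your intermediate diagram. Moreover, the remnant of the thin strip between $\overline{a}_i$ and $\overline{b}_i$, bounded by $\alpha_i'$ and $\overline{b}_i$, is an empty bigon with no basepoint joining it to $\{c_i',c_j,\ldots\}\cup\bm{e}$.

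\textbf{Consequences.} In the intermediate diagram, ``contact class like'' is not pinned down by $c_i'=\alpha_i'\cap\beta_i$. Nothing in the local picture at the points of $\bm{c}$ prevents $\Phi_\alpha$ (or $\Phi_\beta$) from having output terms containing $e$. Whether it does depends on how the band is placed, where $\alpha_i\cap\alpha_i'$ lies, and which of those points is in $\bm{\theta}$, and this has to be checked.

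The difficulty depends on the direction of the slide. If the endpoint of $\overline{a}_j$ comes first, the new crossing lies on $\beta_j$. A generator using it is then excluded by counting, since $\alpha_j$ would need a point off $\Sigma_S$.

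One possible repair is to put the band on the $\Sigma_{G\cup P}$ side. This moves every new crossing off $\Sigma_S$, where the grading does exclude it. But then $\alpha_i'\cap\Sigma_S$ has two components, and you must still show that the isotopy back to the standard form $a_i'\cup\overline{a}_i'$ preserves contact class like generators. Supplying an argument of this kind is exactly what the proof needs.
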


\begin{proof}
    This follows just as in the proof of~\cite[Lemma 3.5]{honda2006contact}.
\end{proof}

\begin{lemma}\label{lem:arcovercurve}
Suppose the arcs $\bm{a'}$ are obtained from the arcs $a$ by sliding an arc $a\in\bm{a} $  over a curve $\alpha\in\bm{\alpha_G}$. If contact class like elements generate $\widehat{\HFK}(\widehat{\mathcal{H}}(\Sigma,\bm{\beta_G},\bm{\alpha_G},\bm{b},\bm{a},\bm{\overline{a}}),[-S],-g)$ then contact class like elements generate $\widehat{\HFK}(\widehat{\mathcal{H}}(\Sigma,\bm{\beta_G},\bm{\alpha_G},\bm{b},\bm{a'},\bm{\overline{a}}),[-S],-g)$. A similar statement follows for sliding  an arc $b\in\bm{b} $  over a curve $\beta\in\bm{\beta_G}$.
\end{lemma}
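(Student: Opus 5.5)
The natural route is to realize the arc-over-curve slide as an $\alpha$-handleslide of Heegaard diagrams and to check that the associated triangle map preserves contact class like elements. Sliding the arc $a\subset\alpha_i=a\cup\overline{a}$ over $\alpha\in\bm{\alpha_G}$ replaces $\alpha_i$ by the handleslid curve $\alpha_i'=a'\cup\overline{a}$ and leaves every other $\alpha$-curve fixed up to small Hamiltonian isotopy. Write $\bm{\alpha}'$ for the resulting collection. The slide region lies entirely in $\Sigma_{G\cup P}$, away from $\Sigma_S$. Since the curves in $\bm{\alpha}$ and $\bm{\alpha}'$ are disjoint from the basepoints, the standard invariance argument of~\cite{ozsvath2004holomorphic} gives an isomorphism
\[\Phi:\widehat{\HFK}(\widehat{\mathcal{H}}(\Sigma,\bm{\beta_G},\bm{\alpha_G},\bm{b},\bm{a},\bm{\overline{a}}))\to\widehat{\HFK}(\widehat{\mathcal{H}}(\Sigma,\bm{\beta_G},\bm{\alpha_G},\bm{b},\bm{a'},\bm{\overline{a}})).\]
This map counts holomorphic triangles with one vertex at the top generator $\bm{\theta}\in\T_{\bm{\alpha}}\cap\T_{\bm{\alpha}'}$, exactly as in Figure~\ref{fig:triangle}, with the roles of $\alpha$ and $\beta$ swapped as usual in this paper. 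Because it is a knot Floer triangle map, $\Phi$ preserves the Alexander grading, so it restricts to an isomorphism in grading $A_{\text{min}}$. The lemma then follows from two facts: $\Phi$ is surjective, and it sends contact class like elements to contact class like elements.

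The key step is a local analysis near each $c_i\in\Sigma_S$. Each such $c_i$ has a nearby partner $c_i'=\alpha_i'\cap\beta_i$ and a top-degree point $\theta_i=\alpha_i\cap\alpha_i'$, as in Figure~\ref{fig:standardtraingles}. Take a triangle contributing to $\Phi(\bm{e}\cup\bm{c})$. The basepoints $z_1,w_{2g}$ sit in the regions of Figure~\ref{fig:standardtraingles} marked $D_1$, $D_4$ and similar, which forces the local multiplicities there to vanish. As in the proofs of Lemma~\ref{lem:h1toh2} and Proposition~\ref{prop:generorsaftertwists}, only the small triangle $\theta_i,c_i,c_i'$ can then occur near $c_i$. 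So the triangle has vertex $c_i'$ in the output, and every output generator again contains $\bm{c}'=\{c_i'\}$. Its remaining components lie in $\T_{\bm{\alpha_G}}\cap\T_{\bm{\beta_G}}$, because they must occupy the curves not already used. Hence $\Phi$ carries the image of $f_\psi$ into the image of $f'_\psi$ for the new diagram.

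Since $\Phi$ is an isomorphism and the contact class like elements generate the source by hypothesis, their images generate the target, and those images are contact class like. The statement for sliding an arc $b\in\bm{b}$ over $\beta\in\bm{\beta_G}$ is the same argument with a $\beta$-handleslide and the triangle map for the $\beta$-side. The local picture at each $c_i$ is the mirror of Figure~\ref{fig:standardtraingles}, and the basepoint argument is symmetric.

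I expect the main obstacle to be the step showing that every triangle is the small one near each $c_i$. The slide changes the curve $\alpha_i$ globally, but the triangle domains can wander through $\Sigma_{G\cup P}$. One must check that no triangle has a corner at some other intersection of $\alpha_i'$ with a $\beta$-curve while still passing by $c_i$. I would handle this with the same basepoint-adjacency argument used in Lemma~\ref{lem:chain}. Positive domains, multiplicity zero at the basepoint adjacent to $c_i$, and the requirement that $\partial D$ along $\alpha_i'$ near $\Sigma_S$ start at $\theta_i$ together force the corner at $c_i'$. I would also use the Alexander grading formula, Equation~\ref{eq:Alexandergrading}, to rule out generators with points of $\alpha_i'\cap\beta_j$ outside $\Sigma_S$: those lie in higher Alexander grading, so they cannot appear in the $A_{\text{min}}$ image.
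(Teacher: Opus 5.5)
Your proposal is correct and is the paper's argument: realize the arc slide as a handleslide, use the triangle-counting isomorphism, and use the basepoint positions near each $c_i$ to force the small triangle, so that contact class like elements go to contact class like elements. Your write-up is a more detailed version of the paper's two-line sketch. One small correction: in Figure~\ref{fig:standardtraingles} the regions containing $z_1$ are the two labelled $0$, not $D_1$ and $D_4$, and it is these, together with positivity and the corner conditions at $c_i$, $\theta_i$ and $c_i'$, that force the small triangle near each $c_i$.
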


\begin{proof}
 To show contact class like generators are preserved by these operations, we need only concern ourselves  with local behavior of the curves in $\Sigma_S$. $\Sigma\setminus\Sigma_{G\cup P}$, noting that the arcs in $\Sigma\setminus\Sigma_{G\cup P}$ are unchanged under this operation. The result then follows from considering the local behavior of pseudo-holomorphic disks in a neighborhood of the $c_i$ intersection points.
\end{proof}

\begin{lemma}
Suppose $\bm{\alpha_G'}$ and $\bm{\beta_G'}
$ are obtained from $\bm{\alpha_G}$ and  $\bm{\beta_G}
$ by handleslides. If contact class like elements generate $\widehat{\HFK}(\widehat{\mathcal{H}}(\Sigma,\bm{\beta_G},\bm{\alpha_G},\bm{b},\bm{a},\bm{\overline{a}}),[-S],-g))$ then contact class like elements generate $\widehat{\HFK}(\widehat{\mathcal{H}}(\Sigma,\bm{\beta_G'},\bm{\alpha_G'},\bm{b},\bm{a},\bm{\overline{a}}),[-S],-g)$.
\end{lemma}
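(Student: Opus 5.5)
The plan is to treat a handleslide among the $\bm{\alpha_G}$ (resp.\ $\bm{\beta_G}$) curves in the same way Lemma~\ref{lem:h1toh2} treats $\beta$-handleslides. Such a slide is realized on homology by a triangle-counting map
\[
\Phi:\widehat{\HFK}(\widehat{\mathcal{H}}(\Sigma,\bm{\beta_G},\bm{\alpha_G},\bm{b},\bm{a},\bm{\overline{a}}),[-S],-g)\to\widehat{\HFK}(\widehat{\mathcal{H}}(\Sigma,\bm{\beta_G'},\bm{\alpha_G'},\bm{b},\bm{a},\bm{\overline{a}}),[-S],-g).
\]
The auxiliary diagram used to define $\Phi$ consists of the old curves, the slid curves, and small Hamiltonian push-offs of every other curve, in particular of each $\alpha_i=a_i\cup\overline{a}_i$. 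The cycle $\bm{\theta}$ is the top-graded generator. Its components on the push-offs of the $\bm{\alpha_S}$ curves are the intersection points $\theta_i$ adjacent to $c_i$, as in Figure~\ref{fig:standardtraingles}. By the invariance proof of~\cite{ozsvath2004holomorphic}, $\Phi$ is a graded isomorphism, and since it preserves Alexander gradings it restricts to an isomorphism in grading $A_\text{min}$.

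The key step is to show that $\Phi$ carries contact class like elements to contact class like elements. I would argue this locally near each $c_i$, exactly as in the proofs of Lemma~\ref{lem:h1toh2} and Proposition~\ref{prop:generorsaftertwists}. Handleslides among the $\bm{\alpha_G}$ or $\bm{\beta_G}$ curves are supported in $\Sigma_{G\cup P}$, so nothing changes in $\Sigma_S$. Near each $c_i$ the local picture is therefore that of Figure~\ref{fig:standardtraingles}, with basepoints placed as in $\widehat{\mathcal{H}}(-K_\psi)$.

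For a generator $\bm{e}\cup\bm{c}$, consider a triangle with nonnegative multiplicities and no multiplicity at the basepoints. The basepoint constraints force the local multiplicities $D_1,\dots,D_5$ to vanish except for the small triangle with corners $c_i,\theta_i,c_i'$. So the triangle's corner at $c_i$ goes to $c_i'$, the nearest intersection of $\alpha_i'$ with $\beta_i$, which is the corresponding $c$-point of the new diagram. Hence every output generator has the form $\bm{e}'\cup\bm{c}'$, and this holds for all $i$ simultaneously. Alternatively, one can observe from Equation~\ref{eq:Alexandergrading} that the Alexander grading forces all $2g$ components on $\bm{\alpha_S}$ curves to lie in $\Sigma_S$, and the only such points are the $c_i$.

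It follows that $\Phi$ maps the span of contact class like generators to itself. Indeed, $\Phi$ intertwines with the corresponding triangle map on $\CF(\mathcal{H}(-S))$ for the handleslide in $(\Sigma_{G\cup P},\bm{\beta_G},\bm{\alpha_G})$. Since $\Phi$ is an isomorphism, if the source group is generated by contact class like elements then so is the target. The $\beta$ case is symmetric.

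I expect the main obstacle to be the claim that every triangle is locally constant at the $c_i$. This requires checking that the basepoint configuration really kills all other local corner choices, even though triangles may wrap through $\Sigma_P$ and re-enter $\Sigma_S$. I would handle this with the Alexander grading count: an output generator of grading $A_\text{min}$ must have all its $\bm{\alpha_S}$ components in $\Sigma_S$.
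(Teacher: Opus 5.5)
Your proposal is correct and follows the paper's argument. You realize the handleslide by the triangle-counting isomorphism on the Heegaard triple, and use the placement of $z_1$ in the regions around each $c_i$ (Figure~\ref{fig:standardtraingles}) to force every contributing triangle to be the small triangle from $c_i$ to $c_i'$, so contact class like elements go to contact class like elements and generation transfers through the isomorphism. One caution: drop the ``alternative'' Alexander-grading argument, because in $\widehat{\mathcal{H}}(-K_\psi)$ the curves $\bm{\alpha_S}$ and $\bm{\beta_S}$ can meet inside $\Sigma_S$ away from the $c_i$ (e.g.\ near $\partial\Sigma_S$ after the slides over the $\widehat{\bm{\beta}}$-curves), so lying in $\Sigma_S$ does not pin a component down to $c_i'$ --- you do not need it anyway, since the local corner relations at $c_i$, $\theta_i$, $c_i'$ together with the two multiplicity-zero $z_1$-regions already force the small triangle however the domain wraps through $\Sigma_P$.
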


\begin{proof}
   It suffices to show that the statement is true after a single handleslide of one curve of $\bm{\alpha_G}$ ($\bm{\beta_G}$) over another curve $\bm{\alpha_G}$  ($\bm{\beta_G}$). In the $\bm{\alpha_G}$ case, one can construct an isomorphism by superimposing perturbations of $\bm{\alpha_G}$, $\bm{\alpha_G}'$ and $\bm{\beta_G}$ on $\Sigma$ to form a single Heegaard triple diagram. There is then an isomorphism given by counts of pseudo-holomorphic triangles as shown in Figure~\ref{fig:triangle}. Once again, the convenient positioning of the basepoints in neighborhoods of each of the points in $\bm{c}$ and $\bm{c}'$ implies the desired result.
\end{proof}

\begin{lemma}
Suppose $(\Sigma',\bm{\alpha_G'},\bm{\beta_G'})$ is obtained from $(\Sigma,\bm{\alpha_G},\bm{\beta_G})$ by (de)stabilization. If contact class like elements generate $\widehat{\HFK}(\widehat{\mathcal{H}}(\Sigma,\bm{\beta_G},\bm{\alpha_G},\bm{b},\bm{a},\bm{\overline{a}}),[-S],-g)$ then contact class like elements generate $\widehat{\HFK}(\widehat{\mathcal{H}}(\Sigma',\bm{\beta_G'},\bm{\alpha_G'},\bm{b},\bm{a},\bm{\overline{a}}),[-S],-g)$.
\end{lemma}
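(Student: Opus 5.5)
By symmetry, stabilization and destabilization are inverse moves, so it suffices to treat a single stabilization of $(\Sigma_G,\bm{\alpha_G},\bm{\beta_G})$. We do it inside $\Sigma_G$, away from $\partial\Sigma_G$ and from all arcs of $\bm{a}$ and $\bm{b}$. Concretely, $\Sigma'$ is obtained from $\Sigma$ by taking a connected sum, in a small disk $B\subset\Sigma_G$ disjoint from every curve and arc, with a torus $T$. The torus carries a new pair of curves $\alpha',\beta'$ meeting transversely in a single point $e$. We set $\bm{\alpha_G'}=\bm{\alpha_G}\cup\{\alpha'\}$ and $\bm{\beta_G'}=\bm{\beta_G}\cup\{\beta'\}$.

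The bottom piece $\Sigma_S$, the middle piece $\Sigma_P$, and the arcs $\bm{a},\bm{b},\bm{\overline{a}}$ are untouched. In particular the points $c_i\in\Sigma_S$ and the positioning of $z_1,w_{2g}$ are unchanged, and so is the admissibility argument given earlier.

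\textbf{Step 1: the generator correspondence.} On generators, the stabilization map is the standard one,
\[
\sigma:\T_{\bm\alpha}\cap\T_{\bm\beta}\to\T_{\bm\alpha'}\cap\T_{\bm\beta'},\qquad \bm{x}\mapsto\bm{x}\cup\{e\}.
\]
Since $\alpha'\cap\beta'=\{e\}$ and $\alpha',\beta'$ meet no other curves, $\sigma$ is a bijection of generators. The point $e$ lies outside $\Sigma_S$, and every generator of $\widehat{\mathcal{H}}'$ contains $e$. Hence Equation~\ref{eq:Alexandergrading} shows that $\sigma$ shifts every Alexander grading by the same constant, namely the one that recomputing $A_\text{min}$ for the new diagram absorbs. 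So $\sigma$ carries the $A_\text{min}$ summand to the $A_\text{min}$ summand.

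\textbf{Step 2: $\sigma$ is a chain isomorphism.} I would invoke the usual stabilization invariance argument (Ozsv\'ath--Szab\'o, or Lipshitz's cylindrical version). For a suitable complex structure stretched along the neck $\partial B$, Maslov index one holomorphic disks in the stabilized diagram correspond bijectively to those in the original diagram, with the constant disk at $e$ adjoined. Here the basepoints stay in $\Sigma_S\cup\Sigma_P$, far from $B$, so nothing changes in the region where they constrain domains. I expect this step to be the main (though standard) obstacle. I would handle it by citing the invariance proof directly rather than redoing the neck-stretching.

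\textbf{Step 3: compatibility with $f_\psi$.} Stabilizing $(\Sigma_G,\bm{\alpha_G},\bm{\beta_G})$ also stabilizes $\mathcal{H}(-S)$, via the same map $\bm{e}\mapsto\bm{e}\cup\{e\}$. The square $\sigma\circ f_\psi=f'_\psi\circ\sigma_G$ then commutes on the nose, because $(\bm{e}\cup\bm{c})\cup\{e\}=(\bm{e}\cup\{e\})\cup\bm{c}$. Consequently $\sigma$ sends contact class like elements to contact class like elements, and $\sigma^{-1}$ does the reverse.

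\textbf{Conclusion.} The map $\sigma_*$ is an isomorphism on $\widehat{\HFK}(\,\cdot\,,[-S],-g)$ that maps the image of $f_\psi^*$ exactly onto the image of $f'^*_\psi$. Therefore generation by contact class like elements holds in one diagram if and only if it holds in the other, which proves the lemma in both directions.
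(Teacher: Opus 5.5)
Your proposal is correct and follows the paper's own argument: the paper likewise treats stabilization as a connected sum with the genus-one diagram for $S^3$ that has a unique intersection point, notes that the induced map on $\widehat{\CFK}$ just appends that point to every generator (so contact class like elements go to contact class like elements), and says destabilization is similar. Your Steps 1--3 spell out what the paper leaves implicit, such as the on-the-nose identity $\sigma\circ f_\psi = f'_\psi\circ\sigma_G$. One small point on Step 1: it is cleaner to say that stabilization preserves relative $\spin^c$ structures, so $\sigma$ preserves the Alexander grading outright and no renormalization of $A_\text{min}$ is needed.
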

\begin{proof}
  We treat the stabilization case. Recall that one can think of stabilization of $\mathcal{H}(\Sigma,\bm{\alpha_G},\bm{\beta_G})$ as taking the connect sum of the canonical genus one Heegaard diagram for $S^3$, which contains a unique intersection point, $x$. The result is then immediate from the fact that the map induced by stabilization on $\widehat{\CFK}$ consists of appending the unique intersection point $x$ to all generators of $\widehat{\CFK}$. The destabilization case is similar.
\end{proof}

\begin{lemma}\label{lem:isotopiesinsp}
Suppose $\bm{a'}\cup\bm{\alpha_G'}\cup\bm{\beta_G'}$ are obtained from $\bm{a}\cup\bm{\alpha_G}\cup\bm{\beta_G}$ by isotopies in $\Sigma_P\cup\Sigma_G$. If contact class like elements generate $\widehat{\HFK}(\widehat{\mathcal{H}}(\Sigma,\bm{\beta_G},\bm{\alpha_G},\bm{b},\bm{a},\bm{\overline{a}}),[-S],-g)$ then contact class like elements generate $\widehat{\HFK}(\widehat{\mathcal{H}}(\Sigma,\bm{\beta_G'},\bm{\alpha_G'},\bm{b},\bm{a'},\bm{\overline{a}}),[-S],-g)$.
\end{lemma}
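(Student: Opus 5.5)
We reduce to the standard invariance argument for isotopies in Heegaard Floer homology, and check that the induced map preserves the span of contact class like elements. First we factor any isotopy of $\bm{a}\cup\bm{\alpha_G}\cup\bm{\beta_G}$ supported in $\Sigma_P\cup\Sigma_G$ as a finite sequence of isotopies of a single curve or arc. Each of these either creates or cancels a pair of intersection points (a finger move), or is a Hamiltonian isotopy through admissible diagrams that introduces no new bigons. In the second case the chain complexes are canonically identified: generators persist, and the differentials agree for a suitable path of almost complex structures. The identification is then visibly the identity on the $\bm{c}$ components, so contact class like elements map to contact class like elements. Admissibility is preserved throughout by the admissibility lemma of Section~\ref{subsec:admissibility}, since the basepoint configuration in $\Sigma_S$ is untouched.

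For a finger move creating (or cancelling) a pair of intersection points, we use the continuation map induced by the isotopy. Equivalently, we use a triangle map with respect to the top-graded generator $\bm{\Theta}$ of the diagram formed by $\bm{\alpha}$ together with a small Hamiltonian perturbation $\bm{\alpha}'$ of $\bm{\alpha}$; the $\beta$ case is symmetric. The key observation is the one used repeatedly above, namely the local picture of Figure~\ref{fig:standardtraingles}. Because the isotopy is supported away from $\Sigma_S$, in $\Sigma_S$ each $\alpha_i$ and its perturbation $\alpha_i'$ meet $\beta_i$ in the standard configuration near $c_i$, with $\theta_i$ nearby. The basepoints $z_1$ and $w$ sit in the regions marked $0$. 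Hence every triangle contributing to the map with input $\bm{e}\cup\bm{c}$ has multiplicity zero at the corners adjacent to $c_i$ except the small triangle $\theta_i,c_i,c_i'$, and the output contains $c_i'$. More precisely, we read off from the figure that $D_1=D_2+\dots$ relations force the shadow near each $c_i$ to be either the small triangle or zero. The constancy argument then shows the output again contains $\bm{c}'$. So the map takes the image of $f_\psi$ into the image of $f_\psi$ for the new diagram.

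Because the map is an isomorphism on $\widehat{\HFK}(\cdot,A_{\text{min}})$ by the invariance theorem of Ozsv\'ath--Szab\'o (the Alexander grading is preserved as the isotopy avoids the basepoints), generation by contact class like elements transfers. The main obstacle is the justification that the relevant triangles are constant (or the standard small triangle) near every $c_i$ even though the isotopy may drag an $a_i$ arc across the twisting region. We handle this by using the Alexander grading formula \eqref{eq:Alexandergrading}: any output generator in $A_{\text{min}}$ must have all its points on $\alpha_i,\beta_i$ curves lying in $\Sigma_S$. Combined with the basepoint positions, this forces the corner at each $\alpha_i'\cap\beta_i$ to be $c_i'$.
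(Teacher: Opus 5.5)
Your proposal is correct and follows the paper's argument. The paper's proof takes the usual map on Heegaard Floer homology induced by the isotopy and notes that, because the isotopy is supported away from $\Sigma_S$, the positioning of the basepoints near each $c_i$ (the local picture of Figure~\ref{fig:standardtraingles}) forces contact class like elements to be sent to contact class like elements, so generation transfers through the isomorphism; your decomposition into finger moves and trivial isotopies, together with the triangle-map bookkeeping, just makes explicit what the paper leaves implicit (the closing Alexander-grading remark is redundant, since the local picture at $c_i$ already produces the corner $c_i'$).
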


\begin{proof}
Once again this follows from considering the usual map on Heegaard Floer homology induced by isotopies, together with the convenient positioning of the basepoints near the $c_i$ intersection points.    
\end{proof}

We will prove stronger versions of Lemma~\ref{lem:isotopiesinsp} and Lemma~\ref{lem:arcovercurve} in Section~\ref{subsec:nicediagrams}.

\section{Generators of the next-to-minimal Alexander grading and Nice Diagrams}\label{sec:nexttominimal}

In this section we find elements of the next-to-bottom grading of knot Floer homology related to the elements of the bottom grading we found in the previous section, the contact class like elements. To show that these generators generate a summand, we need to control the induced differential $\partial_{\widehat{\CF}}^*$ on $\widehat{\CFK}(-K)$ --- i.e. the differential computing $\widehat{\HF}(Y)$, where $Y$ is the underlying $3$-manifold. To do this we appeal to a strategy due to Sarkar-Wang~\cite{MR2630063}, the details of which we will explain in Section~\ref{subsec:nicediagrams}. We then conclude the proof of the main Theorem, Theorem~\ref{thm:mainnoMaslov}, in Section~\ref{subsec:nexttobottom}.

\subsection{Nice Diagrams}\label{subsec:nicediagrams}

Recall that a Heegaard diagram $(\Sigma,\bm{\alpha},\bm{\beta})$ is \emph{nice} if every region of $\Sigma\setminus(\bm{\alpha}\cup\bm{\beta})$ either contains a basepoint, or is a bigon or a square~\cite[Definition 3.1]{MR2630063}. Equivalently, if we define the \emph{badness} of a $2n$-gon as $\max\{n-2,0\}$, a sutured Heegaard diagram is nice if every region that doesn't contain a boundary component has badness $0$. The key point of nice Heegaard diagrams is that computing their knot Floer homology is combinatorial. This will allow us to prove the main theorem by adapting a trick Baldwin-Vela-Vick used to prove the fibered case~\cite{baldwin_note_2018}. 

Informally, our goal in this subsection is to make our Heegaard diagrams nice while preserving the contact class like elements. Before proceeding we need to expand our definition of contact class like generators slightly. 
Consider a Heegaard diagram $\widehat{\mathcal{H}}(K)$, as defined in Section~\ref{subsubsec:HFhat}. We may pick closed curves $\ell_i$ such that:\begin{itemize}
  \item  each curve contains $z_1$,
  \item $\ell_i\cap({\bm{\alpha_S}}\cup\bm{\beta_S})=c_i$.
  \item $\ell_i\cap({\bm{\alpha_G}}\cup\bm{\beta_G})=c_i$.
\end{itemize}

Here $\bm{\alpha_S}$, and $\bm{\beta_S}$ are the curves introduced in Section~\ref{subsec:knotcomplementstosuturedmanifolds}. One can explicitly construct such a family of curves. Let $A$ denote a small neighborhood of $\underset{i}{\bigcup}\ell_i$.
We call isotopies supported in ${(\Sigma_G\cup\Sigma_P\cup\Sigma_S)\setminus A}$  \emph{contact class preserving isotopies}. Likewise we call handleslides of $\bm{\alpha}$ or  $\bm{\beta}$-curves over $\bm{\beta_G}$ or $\bm{\alpha_G}$-curves \emph{contact class preserving handleslides}. This terminology is justified by the fact that we have a well defined intersection point $c_i$ in any Heegaard diagram obtained from an adapted Heegaard diagram by a sequence of contact class preserving isotopies and contact class preserving handleslides. If $\mathcal{H}$ is a Heegaard diagram obtained from $\widehat{\mathcal{H}}(K)$ by contact class preserving isotopies and handleslides, we call an element $\bm{x}\in\widehat{\CFK}(\mathcal{H})$ a \emph{contact class like element} if it is a linear combination of generators each of which contains all of the distinguished intersection points $c_i$. This terminology is further justified by the following modest extension of Lemma~\ref{lem:isotopiesinsp} to this setting:

\begin{lemma}\label{lem:contactclasspreservingisotopies}
Suppose $\bm{\alpha_S'},\bm{\alpha'_G},\bm{\beta'_G},\bm{\beta_S'}$ are obtained from $\bm{\alpha_S},\bm{\alpha_G},\bm{\beta_G},\bm{\beta_S}$ respectively by contact class preserving isotopies. If contact class like elements generate $$\widehat{\HFK}(\mathcal{H}(\Sigma,\bm{\alpha_G},\bm{\beta_G},\bm{a},\bm{\overline{a}}),[-S],-g)$$ then contact class like elements generate $\widehat{\HFK}(\mathcal{H}(\Sigma,\bm{\alpha_G'},\bm{\beta_G'},\bm{a'}),[-S],-g)$.
\end{lemma}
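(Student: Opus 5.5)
The plan is to reduce to elementary moves, exactly as in the proof of Lemma~\ref{lem:isotopiesinsp}. Any contact class preserving isotopy factors as a finite sequence of isotopies, each of which either creates or cancels a pair of intersection points between an $\bm{\alpha}$-curve and a $\bm{\beta}$-curve (a finger move) or is supported away from intersection points. Isotopies of the second kind change nothing combinatorially, so it suffices to treat a single finger move supported in $(\Sigma_G\cup\Sigma_P\cup\Sigma_S)\setminus A$. Each such finger move induces an isomorphism on $\widehat{\HFK}$. We model it in the standard way, by superimposing the old and new curves to form a Heegaard triple diagram and counting pseudo-holomorphic triangles with corner the top generator $\bm{\theta}$, as in Figure~\ref{fig:triangle}. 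The induced map $F$ is an isomorphism preserving the Alexander grading, and its inverse (for the reverse isotopy) is of the same form. So it is enough to show that $F$ carries contact class like elements to contact class like elements. Then $F$ restricted to the span of contact class like cycles in grading $A_\text{min}$ surjects onto homology, because the source homology is generated by such elements.

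The key step is the local analysis near each $c_i$. Since the isotopy is supported off $A$, a neighborhood of each $c_i$ and of the curve $\ell_i$ through $z_1$ is untouched. After perturbation, the local picture of the triple diagram near $c_i$ is exactly the one in Figure~\ref{fig:standardtraingles}, with the new curve a small push-off and $\theta_i$ the nearby top intersection point. The curve $\ell_i$ meets the curves only at $c_i$ and passes through the basepoint $z_1$. So the region on one side of $c_i$ along $\ell_i$ is connected to the basepoint region without crossing any curve. This forces the multiplicities labelled $0$ in Figure~\ref{fig:standardtraingles} to vanish. Nonnegativity of the domain then forces every triangle counted by $F$ out of a generator containing $c_i$ to have its output corner at the point $c_i'$ corresponding to $c_i$. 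That is, we need $D_3=D_4=D_5=0$ locally, so the triangle is the small constant-like triangle at $c_i$.

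I expect the main obstacle to be this multiplicity argument. In the original diagrams, the basepoints $z_1,w_{2g}$ sat immediately adjacent to each $c_i$ inside $\Sigma_S$. After contact class preserving handleslides and isotopies, the only guarantee is the basepoint-free path $\ell_i$ from $c_i$ to $z_1$. My first attempt would be to argue that along $\ell_i\setminus\{c_i\}$ no $\bm{\alpha}$- or $\bm{\beta}$-curve is crossed. This is by the defining property of the $\ell_i$, which is preserved because the isotopy avoids $A$. Hence the multiplicity of the domain is constant along each half of $\ell_i$, and equals $n_{z_1}=0$. This gives the two zero quadrants at $c_i$, and the remaining quadrants are then determined as in Figure~\ref{fig:standardtraingles}.

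Finally, we define $c_i'$ as the intersection point persisting in the new diagram. With that in place, the argument above shows that $F$ maps generators containing all $c_i$ to sums of generators containing all $c_i'$. Applying the same reasoning to the inverse map gives the conclusion.
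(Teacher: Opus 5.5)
Your proposal is correct and follows the paper's own argument: the paper takes the usual map induced by the isotopy and uses that $z_1$ lies in two diagonally opposite quadrants at each $c_i$, which you justify via $\ell_i$ and the fact that the isotopy fixes $A$. This forces every counted triangle out of a generator containing $c_i$ to end at $c_i'$. One small slip: in Figure~\ref{fig:standardtraingles}, the corner relations at $c_i$ and $c_i'$ together with the two zero quadrants force $c_i'$ to be an output corner with $D_3=1$ and $D_1=D_2=D_4=D_5=0$ (the orange small triangle), not $D_3=D_4=D_5=0$.
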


\begin{proof}
Once again, this follows from considering the usual map on Heegaard Floer homology induced by isotopies, together with the convenient positioning of the basepoint $z_1$ in two of the diagonally opposite domains adjacent to $c_i$ for each $i$.    
\end{proof}

Likewise we have the following extension of Lemma~\ref{lem:arcovercurve}:

\begin{lemma}\label{lem:contactclasspreservinghandleslides}
Suppose $\bm{\alpha_S'}$, and $\bm{\beta_S'}$ are obtained from $\bm{\alpha_S},\bm{\alpha_G},\bm{\beta_G},\bm{\beta_S}$ respectively by contact class preserving handleslides. If contact class like elements generate $$\widehat{\HFK}(\mathcal{H}(\Sigma,\bm{\alpha_G},\bm{\beta_G},\bm{a},\bm{\overline{a}}),[-S],-g)$$ then contact class like elements generate $\widehat{\HFK}(\mathcal{H}(\Sigma,\bm{\alpha_G'},\bm{\beta_G'},\bm{a'}),[-S],-g)$.
\end{lemma}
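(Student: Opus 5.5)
It suffices to treat a single contact class preserving handleslide. The general statement then follows by induction on the number of slides, since by definition each slide preserves the distinguished points $c_i$. So let $\mathcal{H}'$ be obtained from $\mathcal{H}$ by sliding some $\bm{\alpha}$-curve over a curve of $\bm{\alpha_G}$; the $\bm{\beta}$ case is symmetric.

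\textbf{Step 1: set up the triple diagram.} Form the Heegaard triple $(\Sigma,\bm{\alpha}',\bm{\alpha},\bm{\beta},z_1,w)$, where $\bm{\alpha}'$ is a small Hamiltonian perturbation of the slid collection. Near each $c_i$ the perturbation is chosen so that the local picture is exactly that of Figure~\ref{fig:standardtraingles}. In particular, the curves of $\bm{\alpha}'$ near $c_i$ are small push-offs of the curves of $\bm{\alpha}$, and they meet in the top-graded point $\theta_i$. The curves being slid over lie in $\bm{\alpha_G}$, which is disjoint from $A$ because $\ell_i\cap(\bm{\alpha_G}\cup\bm{\beta_G})=\emptyset$ away from $c_i$. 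So the handleslide can be performed, and the arc of the slid curve dragged along, entirely outside the neighborhood $A$ of $\bigcup_i\ell_i$. This is what guarantees that each $c_i$ persists and has the same two diagonally opposite neighboring domains containing $z_1$, as in the proof of Lemma~\ref{lem:contactclasspreservingisotopies}.

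\textbf{Step 2: the triangle map.} The triangle map
\[ F:\widehat{\CFK}(\mathcal{H})\to\widehat{\CFK}(\mathcal{H}'),\qquad \bm{x}\mapsto F(\bm{\Theta}\otimes\bm{x}), \]
is a chain map and induces the standard handleslide isomorphism on $\widehat{\HFK}$ in the Alexander grading $-g$~\cite{ozsvath2004holomorphic}. Admissibility of the triple follows as in Section~\ref{subsec:admissibility}. Here $\bm{\Theta}$ is the top generator of the $(\bm{\alpha}',\bm{\alpha})$ diagram, which we take to contain the points $\theta_i$ near each $c_i$; this is a cycle because $z_1$ lies on both sides of the thin bigons.

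\textbf{Step 3: contact class like elements go to contact class like elements (the key step).} Let $\bm{x}$ contain every $c_i$, and let $\psi$ be a triangle with nonzero count from $\bm{\Theta}\otimes\bm{x}$ to some $\bm{y}$. Apply the local analysis of Figure~\ref{fig:standardtraingles} at each $c_i$. Two of the four quadrants at $c_i$ contain $z_1$ and so have multiplicity $0$. The corner conditions at $\theta_i$ and $c_i$ then force the local domain to be the small triangle $\theta_i c_i c_i'$, where $c_i'$ is the nearest point of $\alpha'_i\cap\beta_i$ to $c_i$, with multiplicity one and the surrounding multiplicities $D_1,\dots,D_5$ all zero. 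Consequently $\bm{y}$ contains $c_i'$, which is the distinguished point of $\mathcal{H}'$. Hence $F$ carries the span of contact class like generators into the span of contact class like generators.

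\textbf{Step 4: conclude.} Since $F_*$ is an isomorphism and $\widehat{\HFK}(\mathcal{H},[-S],-g)$ is generated by classes represented by contact class like cycles, their images generate $\widehat{\HFK}(\mathcal{H}',[-S],-g)$, and those images are represented by contact class like cycles.

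I expect the main obstacle to be Step 1 together with the local multiplicity argument in Step 3. One must check that a slide of an $\bm{\alpha_S}$-curve over an $\bm{\alpha_G}$-curve can always be realized with the band disjoint from $A$ and from the two $z_1$ quadrants at every $c_i$. If the band cannot be routed this way, I would first precompose with contact class preserving isotopies, using Lemma~\ref{lem:contactclasspreservingisotopies}, to move it off $A$.
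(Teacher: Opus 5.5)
Your proposal is correct and is essentially the paper's argument written out in full: the paper also uses the standard handleslide triangle map, forces contributing triangles to be the small triangle $\theta_i c_i c_i'$ near each $c_i$ via the placement of $z_1$ in two diagonally opposite quadrants there, and concludes because that map is an isomorphism. The paper tacitly assumes the slides leave those quadrants in the $z_1$ region, as the slides inside basepoint-free regions in Lemma~\ref{con:nice} do, so your fallback about rerouting the band is not needed. One small slip: in Figure~\ref{fig:standardtraingles} the small triangle is the region $D_3$, so the forced local picture is $D_3=1$ and $D_1=D_2=D_4=D_5=0$.
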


\begin{proof}
Once again, this follows from considering the usual map on Heegaard Floer homology induced by isotopies, together with the convenient positioning of the basepoint $z_1$ in two of the diagonally opposite domains adjacent to $c_i$ for each $i$.    
\end{proof}

 We assume henceforth that $(\Sigma_G,\bm{\alpha_G},\bm{\beta_G})$ is nice and admissible, which we can do by~\cite[Theorem 6.4]{juhasz2008floer}.

\begin{lemma}\label{con:planardomains}
After a contact class preserving isotopy of the $\bm{\beta}_S$ curves we can assume that every region in $\widehat{\mathcal{H}}(K)$ without a basepoint is topologically a disk.  
\end{lemma}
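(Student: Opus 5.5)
The plan is to use the standard ``finger move'' strategy of Sarkar--Wang, but only move $\bm{\beta_S}$-arcs, and keep every isotopy supported away from the neighborhood $A$ of the curves $\ell_i$. Lemma~\ref{lem:contactclasspreservingisotopies} then guarantees that generation by contact class like elements survives each move.

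First I identify which basepoint-free regions can fail to be disks. Since $(\Sigma_G,\bm{\alpha_G},\bm{\beta_G})$ is nice, every region of $\Sigma_G$ meeting no boundary component is already a disk. In $\Sigma_S$ the arcs $\overline{\bm a},\overline{\bm b}$ cut the surface into disks, because $\overline{\bm a}$ is a basis of $H_1(\Sigma_S,\partial\Sigma_S)$. So any non-disk region $R$ without a basepoint must meet $\Sigma_P$, or the annular gluing regions where $\Sigma_G$ and $\Sigma_S$ attach to $\Sigma_P$. There $R$ can carry genus, or several boundary components coming from the pink boundary circles of $\Sigma_G$ or from $\partial\Sigma_S$.

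Next I produce cutting arcs. Take an essential arc or curve in $R$: either a non-separating arc joining two distinct boundary components of $R$, or an arc realizing genus. I will use the fact that $\bm{a}\cup\bm{\alpha_G}$, and likewise $\bm b\cup\bm{\beta_G}$, span $H_1(\Sigma_{G\cup P},\partial\Sigma_{G\cup P})$. This should let me choose such an arc $\delta\subset R$ with an endpoint on a $\bm{\beta_S}$-arc $b_i$ lying in $\partial R$. I then perform a finger move of $b_i$ along $\delta$, pushing it across $\delta$ until it exits $R$ through another edge. This is an isotopy of a $\bm{\beta_S}$ curve, and it is contact class preserving as long as $\delta$ avoids $A$. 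We can arrange this because $A$ is a thin neighborhood of the curves $\ell_i$, which meet the curves only at the $c_i$, and the finger can be routed around $A$. Each such move increases the Euler characteristic of $R$, splitting it or reducing its genus, while creating only disk regions elsewhere. An induction on $\sum_R (1-\chi(R))$ over basepoint-free regions then terminates with all such regions disks.

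The main obstacle is to show the complexity strictly decreases. The finger must not merge two regions so as to create a new non-disk region, and it must not enter a region containing $z_1$ or $w$ in a way that disconnects a basepoint region from $A$. I will handle this as in Sarkar--Wang's argument: push the finger only through disk regions or basepointed regions, whose topology is irrelevant, and take $\delta$ to end on the boundary of a basepointed region. That such a path exists should follow from the surface being connected with every $\bm{\alpha}$ and $\bm{\beta}$ curve non-separating in total. If this route fails for some region, the fallback is a contact class preserving handleslide of $b_i$ over a $\bm{\beta_G}$ curve, covered by Lemma~\ref{lem:contactclasspreservinghandleslides}, to reposition $b_i$ on $\partial R$. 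I then check that admissibility is preserved, since finger moves only add positive/negative pairs of multiplicities in periodic domains.
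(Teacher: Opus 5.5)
Your plan is essentially the paper's argument. You perform a Sarkar--Wang Step~1 finger move of a $\bm{\beta}_S$ arc along a non-separating arc in a basepoint-free non-disk region, ending on an $\alpha$-edge, and keep it off $A$ because a basepoint-free region meets $A$ only in small neighborhoods of the vertices $c_i$. The extra routing of the finger to a basepointed region and the handleslide fallback are unnecessary: pushing just across one $\alpha$-edge already raises $\chi(R)$ by one, and since $\Sigma\setminus\bm{\alpha}$ is planar there is no genus case to treat.
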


Our proof is a more involved version of  that given in~\cite[Section 4.1, Step 1]{MR2630063}.

\begin{proof}
 Since $(\Sigma_G,\bm{\alpha_G},\bm{\beta_G})$ is nice, we may assume that if there is a region that is not topologically a disk then its boundary contains a sub-arc of $\bm{\beta}_S$. Thus, we can perform a finger move starting at $\beta_i\in\bm{\beta}_S$ along a non-separating arc in the domain and ending at some $\alpha$ boundary component of the region. We can always choose this finger move to be supported outside of $A$, since any domain which intersects $A$ non-trivially either contains $z_1$ or intersects $A$ in a small neighborhood of a vertex.
\end{proof}

We assume henceforth that $\widehat{\mathcal{H}}(K)$ satisfies the conclusion of Lemma~\ref{con:planardomains}.

\begin{definition}
   Consider the set of regions of $\widehat{\mathcal{H}}(K)$. A \emph{path} between regions $R_1$ and $R_n$ is a sequence of regions  $R_1,\dots R_n$ such that for all $i$: \begin{itemize}
       \item $R_i$ is not contained in the interior of $\Sigma_G$, and
       \item $R_i$ and $R_{i+1}$ have a common boundary component that is in $\bm{\beta}_S$ but not entirely contained in $\Sigma_G$.
   \end{itemize}
   The \emph{length} of a path is the number of regions it contains. The \emph{distance} between $R_1$ and $R_2$ is the minimal distance of a path between $R_1$ and $R_2$. If there is no path we say the distance is infinite. The \emph{distance of $R$}, $d(R)$, is the minimum distance of a path from $R$ to a region containing a basepoint. The \emph{distance of a Heegaard diagram $\mathcal{H}$}, $d(\mathcal{H})$, is the maximum distance of a bad region.
\end{definition}

\begin{lemma}\label{lem:distancefinite}
Every bad region of $\widehat{\mathcal{H}}(K)$ has finite distance.

\end{lemma}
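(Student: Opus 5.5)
The plan is to show that every bad region $R$ (a region without a basepoint whose badness is positive) either contains a basepoint-adjacent path or can be connected, through regions outside the interior of $\Sigma_G$, across sub-arcs of $\bm{\beta}_S$ to a region containing $z_1$ or $w$. By Lemma~\ref{con:planardomains}, $R$ is a disk. Since $(\Sigma_G,\bm{\alpha_G},\bm{\beta_G})$ is nice, any region lying entirely in the interior of $\Sigma_G$ and bounded only by $\bm{\alpha_G}\cup\bm{\beta_G}$ is a bigon or square. So a bad region must meet $\Sigma_P\cup\Sigma_S$, or have some boundary arc on $\bm{\alpha}_S\cup\bm{\beta}_S$. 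In particular $R$ is not contained in the interior of $\Sigma_G$, so it is an admissible starting point for a path.

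Next, I would show that the union $U$ of all regions reachable from $R$ by paths contains a basepoint. Suppose not. Then $U$ is a union of closures of regions, none containing $z_1$ or $w$. By definition of paths, every $\bm{\beta}_S$-arc on $\partial U$ that is not entirely inside $\Sigma_G$ has $U$ on both sides. So the only boundary of $U$ consists of $\bm{\alpha}$-arcs, $\bm{\beta_G}$-arcs, and $\bm{\beta}_S$-arcs lying inside $\Sigma_G$.

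Consider the $2$-chain $D=\sum_{R'\subset U} R'$. Its boundary contains no $\bm{\beta}_S$-pieces outside $\Sigma_G$. I would use $D$, together with the fact that each $\beta_i\in\bm{\beta}_S$ passes through $\Sigma_S$ (where the regions adjacent to the $c_i$ contain $z_1$ or $w$), to derive a contradiction. The idea is to follow a $\bm{\beta}_S$ boundary arc of $R$ (one exists, or else $R$ would lie in the nice part $\Sigma_G$ or be bounded by $\bm\alpha$ alone). We travel along that $\beta_i$ away from $\Sigma_G$, crossing successive regions that are each adjacent along $\beta_i$, until we reach $\Sigma_S$. Near $\partial\Sigma_S$ (the construction of Section~\ref{subsubsec:HFhat}), and near $c_i$, some adjacent region contains $z_1$ or $w_{2g}$.

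Concretely, the regions on one side of $\beta_i$ form a sequence along $\beta_i$. Consecutive regions on the same side are separated only by $\bm{\alpha}$-arcs, so I will instead alternate sides. A region on the left and the region on the right of a given $\beta_i$-segment share that segment, hence are adjacent in the path sense. Crossing an $\alpha$-curve, the new left region shares a corner with the old ones. To move along, I would use the segment-sharing across $\beta_i$ plus the fact that each region touching $\beta_i$ near a vertex has another $\beta_i$-edge (regions are disks with alternating $\alpha$/$\beta$ edges). Alternatively, in the worst case one can pass via the $\beta$-edge on the other side of the square corner.

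The main obstacle is exactly this propagation step. Adjacency is only across $\bm{\beta}_S$ edges, and the arcs of $\beta_i$ inside $\Sigma_G$ are excluded, so I must check that the walk never has to cross through $\Sigma_G$. To handle this I would use that $\bm{b}$ arcs lie in $\Sigma_{G\cup P}\setminus\bm{\beta_G}$. Only $b_G$ enters $\Sigma_G$, and I would route the walk along the portion of $\beta_i$ in $\Sigma_P\cup\Sigma_S$, which is connected to $\Sigma_S$ without entering $\Sigma_G$. Since every $\beta_i$ meets $\Sigma_S$ in $\overline{b_i}$, whose adjacent regions include basepointed ones, this yields a finite path from $R$ to a basepointed region, so $d(R)<\infty$.
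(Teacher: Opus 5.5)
Your proposal has a genuine gap in the propagation step, which is the heart of the lemma. A path may only pass between regions across a $\bm{\beta}_S$-edge, so it never crosses an $\bm{\alpha}$-curve. When you walk along $\beta_i$ past a point of $\beta_i\cap\bm{\alpha}$, the two regions flanking the next $\beta_i$-edge are separated from the two flanking the previous edge by that $\alpha$-curve. Sharing a corner is not adjacency. A region meeting $\beta_i$ need not have a second edge on $\beta_i$, and its other $\beta$-edges may lie on $\bm{\beta_G}$ or inside $\Sigma_G$, which are forbidden. Going ``around the corner'' the other way still crosses the $\alpha$-curve. So the walk cannot be continued locally past the first $\bm{\alpha}$-crossing, and $\beta_i$ crosses $\bm{a}$ repeatedly in $\Sigma_P$. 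Whether a detour exists is exactly the global question the lemma asks.

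There are two further problems. First, ``only $b_G$ enters $\Sigma_G$'' is not a hypothesis: the normalization concerns the $\bm{a}$-arcs, while the $\bm{b}$-arcs are monodromy images and are moved further by Lemma~\ref{con:planardomains}. Second, your first step has a hole. Having a boundary arc on $\bm{\alpha}_S\cup\bm{\beta}_S$ does not give $R\not\subset\mathrm{int}\,\Sigma_G$, and a bad region inside $\mathrm{int}\,\Sigma_G$ would have infinite distance by definition. You need the paper's standing assumption that $(\Sigma_G,\bm{\alpha_G},\bm{\beta_G})$ stays nice after the $\bm{\beta}_S$-arcs are added.

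The missing idea is to track the $\bm{\alpha}$-curves rather than a $\beta$-curve. The regions traversed by a generic arc in $(\Sigma_S\cup\Sigma_P)\setminus\bm{\alpha}$ form a path in the paper's sense: each region contains a point outside $\Sigma_G$, and each crossing is of a $\bm{\beta}_S$-edge at a point outside $\Sigma_G$. So it suffices that $(\Sigma_S\cup\Sigma_P)\setminus\bm{\alpha}$ is connected, since it contains $z_1$. It is connected for two reasons. First, $\Sigma_S\setminus\overline{\bm{a}}$ is a disk, because the $2g$ disjoint arcs $\overline{\bm{a}}$ form a basis of $H_1(\Sigma_S,\partial\Sigma_S)$. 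Second, every component of $\Sigma_P\setminus\bm{a}$ abuts $\partial\Sigma_S$ along an arc, since every component of $\bm{a}\cap\Sigma_P$ has an endpoint on $\partial\Sigma_S$. This holds, for instance, under the normalization of Subsection~\ref{subsec:boundarytwists}, where $a_G\cap\Sigma_P$ has two components. Each such component is therefore joined to that disk across $\partial\Sigma_S$, which is not a curve of the diagram.

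Since $R$ is open and not contained in $\mathrm{int}\,\Sigma_G$, it meets this connected set, so $d(R)<\infty$. This connectivity is what underlies the paper's one-line observation that a path exists supported in $(\Sigma_P\cup\Sigma_S)\setminus A$. The argument involves only the $\bm{\alpha}$-curves, which none of the later isotopies or handleslides move, so it applies to every diagram in Lemma~\ref{con:nice}. Your union-$U$ contradiction would also close with this input, because $U\cap(\Sigma_S\cup\Sigma_P)$ is closed under crossing $\beta$-arcs there.
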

 \begin{proof}
 
 We are assuming we have modified the Heegaard diagram as per Lemma~\ref{con:planardomains}. We are also assuming that the Heegaard diagram $(\Sigma_G,\bm{\alpha}_G,\bm{\beta}_G)$ is nice, and remains so after the addition of the $\bm{\beta}_S$ curves. It follows that any bad region $D$ intersects $\Sigma_P$ non-trivially. Observe that there is a path from  $D$ to a region containing a basepoint supported in $(\Sigma_P\cup\Sigma_S)\setminus A$, so the result follows. \end{proof}

Following~\cite{MR2630063}, we define the \emph{distance-$d$ complexity} of a finite-distance Heegaard diagram using our modified notion of distance.
That is, we set $$c_d(\mathcal{H}):=\Big(\underset{i=1}{\overset{m}{\sum}}b(D_i),-b(D_1),-b(D_2),\dots,-b(D_m)\Big),$$ where $D_1,D_2,\dots ,D_m$ are the distance $d$ bad regions with an arbitrary order subject to the condition that $b(D_1)\geq b(D_2)\geq\dots\geq b(D_m)$. We endow these tuples with the lexicographic ordering.

\begin{lemma} \label{con:nice}
Suppose we have applied the isotopies in Lemma~\ref{con:planardomains} to ensure that bad domains in $\widehat{\mathcal{H}}(K)$ are topologically disks. If ${c_d(\widehat{\mathcal{H}}(K))\neq 0}$ we may recursively modify $\widehat{\mathcal{H}}(K)$ by contact class preserving isotopies of $\widehat{\bm{\beta}}\setminus \bm{\beta}_G$ and contact class preserving handle-slides of curves in $\widehat{\bm{\beta}}\setminus \bm{\beta}_G$ to produce a finite sequence of Heegaard diagrams $\mathcal{H}_0=\widehat{\mathcal{H}}(K),\mathcal{H}_1, \mathcal{H}_2,\dots$ such that $d(\mathcal{H}_{i+1})\leq d(\mathcal{H}_i)$ and $c_d(\mathcal{H}_{i+1})<c_d(\mathcal{H}_i)$ for all $i$.
\end{lemma}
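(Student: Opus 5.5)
We adapt the Sarkar--Wang inductive step \cite[Section 4.1, Steps 2--3]{MR2630063}, restricting the moves to finger moves and handleslides of $\bm{\beta}_S$ curves so that every modification is contact class preserving in the sense above. Let $D$ be a bad region of distance $d=d(\widehat{\mathcal{H}}(K))$ and maximal badness among distance-$d$ bad regions. By Lemma~\ref{con:planardomains}, $D$ is a $2n$-gon disk with $n\ge 3$. By Lemma~\ref{lem:distancefinite}, there is a path $D=R_d,R_{d-1},\dots,R_1$ with $R_1$ containing a basepoint, where consecutive regions share an edge on a $\bm{\beta}_S$ curve not contained in $\Sigma_G$. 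The construction of that lemma lets us take this path supported in $(\Sigma_P\cup\Sigma_S)\setminus A$.

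The move is a finger move of a $\bm{\beta}$ arc that is an edge of $D$, pushed along this path through the shared $\bm{\beta}_S$ edges, as in Sarkar--Wang. Since $D$ has at least three $\bm{\beta}$ edges, we may choose the edge to push so that the finger enters $D$ and exits through an $\bm{\alpha}$ edge of $D$ that is not adjacent to it. This cuts $D$ into two regions of strictly smaller badness.

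The finger then travels along the path. In each intermediate region $R_j$ it either splits $R_j$ into pieces of no greater badness, or ends in $R_1$, where the new regions contain a basepoint. If the finger would have to cross a $\bm{\beta}$ curve rather than pass between regions, we first perform a handleslide of the pushed $\bm{\beta}_S$ arc over it, as in Sarkar--Wang's Step 3. This handleslide is over a $\bm{\beta}_S$ or $\bm{\beta}_G$ curve, so it is contact class preserving by definition, and Lemma~\ref{lem:contactclasspreservinghandleslides} applies.

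Because the path avoids $A$ and the interior of $\Sigma_G$, the isotopy is supported away from $A$ and the points $c_i$ persist. Lemma~\ref{lem:contactclasspreservingisotopies} then shows that contact class like generation is preserved, although the present lemma only asks for the diagram sequence.

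The remaining task is to verify the two inequalities. First, no new bad region has distance greater than $d$: the new regions created along the path are adjacent, via the new $\bm{\beta}_S$ edges of the finger, to regions closer to the basepoint, and this bounds their distance. Second, the distance-$d$ complexity drops lexicographically. The badness of $D$ strictly decreases, and any region of distance exactly $d$ receives badness at most that of the pieces of $D$, so the first entry is nonincreasing and the tuple decreases. We check this by the same counting as in \cite[Lemma 4.3]{MR2630063}, tracking the corners added to each region the finger passes through.

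Termination follows because the complexities take values in a well-ordered set at each fixed $d$. The main obstacle is the second check, and specifically that the modified distance is not increased. Our notion of path forbids regions inside $\Sigma_G$ and passage across $\bm{\beta}_G$ edges, so a finger running parallel to such edges might sever the only admissible path of some other region. To handle this, we will push the finger so that it only ever crosses $\bm{\alpha}$ edges. Every region it cuts then retains its original $\bm{\beta}_S$ boundary on at least one side and hence its former path.
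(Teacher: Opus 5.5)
Your central move is not available. A finger move of a $\bm{\beta}$-arc is an isotopy that must keep the $\bm{\beta}$-curves disjoint, so the finger can only cross $\bm{\alpha}$-curves. But consecutive regions of your path $R_d,\dots,R_1$ are, by definition, separated by $\bm{\beta}_S$-edges, so the finger can never be pushed along that path to the basepoint region. Your description is also inconsistent here: a finger that enters $D$ through a $\bm{\beta}$-edge and leaves through an $\bm{\alpha}$-edge lands in an $\bm{\alpha}$-neighbour of $D$, which is not on the path.

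Your remedy, handlesliding the pushed arc over each $\bm{\beta}$-curve in the way, leaves the allowed class of moves. The paper only calls handleslides over $\bm{\beta_G}$- or $\bm{\alpha_G}$-curves contact class preserving. A slide over some $\beta_i\in\bm{\beta}_S$ drags a parallel copy of $\beta_i$ across $\alpha_i$ right next to $c_i$, i.e.\ through the neighbourhood $A$ of $\ell_i$. This is precisely why, in its $n=3$ case, the paper has to check that the curve slid over lies in $\bm{\beta_G}$. Once the finger is restricted to crossing $\bm{\alpha}$-edges, as your last paragraph concedes it must be, its route is dictated by the diagram. After leaving $D$ through some $a_i$ it has to run through the chain of squares beyond, and you have no control over where it stops.

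So the real content of the Sarkar--Wang/Juh\'asz step that the paper adapts is missing. The paper pushes from the $\bm{\beta}_S$-edge $b_*$ shared with a region of distance $d-1$ into the distance-$d$ bad region $D_m$ of \emph{minimal} badness. The finger goes out through some $a_i$ and all subsequent squares, and stops at one of four places: a bigon, a bad region of distance $\le d-1$, another bad region of distance $d$, or back at $D_m$.

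In the third case the total distance-$d$ badness is unchanged, and $c_d$ drops only because badness moves away from the least bad region. With your choice of $D$ of maximal badness it need not drop. For example, take distance-$d$ badnesses $\{2,1\}$: an octagon $D$ splits into pieces of badness $0$ and $1$, while the hexagon reached becomes an octagon, so the tuple is unchanged.

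The fourth case is the heart of the argument. You need the homological independence of the $\bm{\beta}$-curves to find an $a_i$ whose finger does not return to $D_m$. When only $a_1$ or $a_n$ qualifies (these are adjacent to $b_*$, so $D_m$ loses no badness), you need the handleslide of $\beta_*$ over a $\bm{\beta_G}$-curve for $n=3$, and the composite finger moves of Juh\'asz's cases B1 and B2 for $n>3$. None of this appears in your proposal.

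Finally, your fix for the distance issue does not work as stated. A square cut by the finger from one $\bm{\alpha}$-edge to the opposite one has its two $\bm{\beta}$-edges in different pieces, so any path that ran through it gets longer.
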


The idea of the proof is to imitate the proof of~\cite[Theorem 1.2]{MR2630063}, with the additional requirement that the isotopies and handle-slides can be chosen to be contact class preserving.

\begin{proof}
Note that we have modified $(\Sigma_G,\bm{\alpha}_G,\bm{\beta}_G)$ to be nice by appealing to~\cite[Theorem 6.4]{juhasz2008floer}, and indeed so that every domain in $\Sigma_G$ that is disjoint from the boundary is a rectangle or bigon even after adding in the curves $\bm{\beta_S}$.   Consider the maximum distance at which there are bad regions, $d$. Note this is non-zero by assumption. Consider $c_d(\mathcal{H}_0)$. Let $D_m$ denote the last bad region in the tuple $c_d(\mathcal{H}_0)$. It is a $2n$-gon with $n\geq 3$. Observe that $n$ is minimal among distance $d$ $2n$-gons. Let $b_*$ be an edge which separates $D_m$ from a region $D_*$ where $d(D_*)=d-1$. Note that $b_*\subset \bm{\beta_S}$. Let $a_1,\dots a_n$ be the $\alpha$-arcs oriented counter-clockwise from $b_*$.

        First we claim that there exists an arc $a_i$ such that the finger move through $a_i$ arcs and all subsequent $4$-gons does not return to $D_m$. Towards a contradiction assume this is not the case. By considering the union of all of the rectangles traversed by these finger moves, one can deduce that the $\beta$-curves are not homologically independent, contradicting the definition of a sutured Heegaard diagram. Thus applying a finger move through one of the series of rectangles does not terminate back at $D_m$, as claimed.

    Returning to the main proof, observe that performing a finger move of $b_*$ through an arbitrary $a_i$ arc and all subsequent squares terminates when one of the following occurs:\begin{enumerate}
        \item\label{item1} We reach a bigon.
        \item\label{item2} We reach a bad region of distance $\leq d-1$.
        \item\label{item3} We reach a bad region of distance $d$ that is not $D_m$.
         \item\label{item4} We return to $D_m$.
    \end{enumerate}

Indeed, notice that we can (and do) pick this finger move to be a contact class preserving isotopy.

 Suppose now that a finger moves through one of the arcs $a_i$ with $i\not\in\{1,n\}$ terminates at a region which is distinct from $D_m$ i.e. we are in Case~\ref{item1}, Case~\ref{item2}, or Case~\ref{item3}.

    Case~\ref{item1}. If a bigon is reached then observe that the total badness of distance $d$ decreases. Indeed, the distances of each of the bad regions does not change, so the distance of the Heegaard diagram does not change and we are done.

    Case~\ref{item2}. We reach a region, $D'$, of distance $\leq d-1$. Observe that if $D'$ is a bigon we are done by case 1). If $D'$ is not a bigon then after  finger move $D'$ is split into a bigon region and another region $D''$ with $b(D'')=b(D')+1$. On the other hand $D_m$ is split into two regions, the total badness of which is $b(D_m)-1$. That is, we see that the total badness stays the same. However, since, $d(D'')<d(D_m)$, the total badness at distance $d$ has decreased, as desired. It is also readily checked that the distance of the Heegaard diagram has not increased, since no bad regions of distance $>d$ were introduced.

    Case~\ref{item3}. As in the previous case, the total badness remains the same. However this time the total badness of distance $d$ remains the same. Nevertheless, we can note that the bad region of distance $d$ reached is $D_k$ for some $k<m$. Thus prior to reordering the tuple encoding the complexity, the last (mth) term increases by one, while the $k$th term decreases by $1$. This implies that the distance $d$ complexity decreases, as desired. Since no new bad regions of distance $>d$ were introduced, while one of the distance $d$ bad regions remained bad after the finger move, the distance remains the same.

    Suppose now that finger moves through $a_i$ terminates back at $D_m$ for all $i\neq 1,n$. We know that a finger moves through at least one of the $a_i$ (and all subsequent rectangles) does not terminate back at $D_m$. We will take it to be $a_1$. An identical argument applies in the case that it is $a_n$.

    We have two cases; $n=3$ or $n>3$.
    If $n=3$ we can then handle slide $\beta_*$ --- the $\beta$-curve containing $b_*$ --- over the $\bm{\beta_G}$ curve which yields the arc connecting $a_2$ and $a_3$. Note that this curve must lie in $\bm{\beta_G}$, rather than in $\bm{\beta_S}$, since if it did not, then the finger move would have to pass through the domain containing the $z_1$ basepoint, a contradiction. This is essentially identical to Case A in the proof of~\cite[Lemma 6.6]{juhasz2008floer}. This decreases the total badness at distance $d$ by one. Since no new bad regions of distance $>d$ are created, the maximum distance of a bad region does not increase, as required.

    If $n>3$ we split again as two cases according to whether or not the finger move through $a_2$ comes back via $a_i$ with $i<n$ or via $a_n$, just as in case B1 and B2 in the proof of~\cite[Lemma 6.6]{juhasz2008floer}.

    If the finger move comes back via $a_i$ with $i<n$ then instead of doing the finger move out through $a_2$ and back through $a_i$ we do the finger move out through $a_i$ returning to $D_m$ through $a_2$. We then continue the finger move out through $a_1$ until exiting the subsequent rectangles. See the right hand side of~\cite[Figure 7]{juhasz2008floer}. If the region this finger move terminates at is not also a bad region of distance $d$, then the total badness of distance $d$ decreases and we are done. On the other hand, if  the terminating region is bad of distance $d$, then it is of the form $D_i$ with $i<m$. Consequently, after performing the finger move, the distance $d$ complexity still decreases, just as in Case~\ref{item3}. Since the region that the finger move terminates at either remains bad or remains good, the maximum distance of a bad region does not increase, as required.
 
    Suppose finally that the finger move through $a_2$ returns to $D_m$ via $a_n$. Observe that there is a finger move of $b_*$ through some $a_p$ with $p\not\in\{1,2\}$ which returns to $D_m$ through $a_{n-1}$. Do this finger move, followed by a finger move through $a_n$ and back to $D_m$ through $a_2$ and finally out through $a_1$. See~\cite[Figure 8]{juhasz2008floer}.  If the region this finger move terminates at is not also a bad region of distance $d$, then the total badness of distance $d$ decreases, as desired. On the other hand, if the terminating region is bad of distance $d$, then it is of the form $D_i$ with $i<m$. Consequently, after performing the finger move, the distance $d$ complexity still decreases, just as in Case~\ref{item3}. Since the region that the finger move terminates at either remains bad or remains good, the maximum distance of a bad region does not increase, as required.\end{proof}

\begin{lemma}\label{lem:admissible}
   The terminal Heegaard diagram in the sequence in the statement of Lemma~\ref{con:nice} is admissible.
\end{lemma}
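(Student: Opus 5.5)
We need to show that the final diagram in the sequence of Lemma~\ref{con:nice} is admissible, i.e.\ every nontrivial periodic domain with multiplicity zero at the basepoints (and at the boundary components of $\Sigma_G$, where relevant) has both positive and negative coefficients. We will not analyse periodic domains of the terminal diagram directly. Instead we show that admissibility is preserved by each elementary move in the sequence. The starting diagram $\widehat{\mathcal{H}}(K)$ is admissible by the Lemma of Section~\ref{subsec:admissibility}, given that $(\Sigma_G,\bm{\alpha_G},\bm{\beta_G})$ is admissible, and Lemma~\ref{con:planardomains} only adds finger moves to this. So we must check two kinds of moves: the finger moves (contact class preserving isotopies of $\bm{\beta_S}$-curves), and the handleslides of a $\bm{\beta_S}$-curve over a $\bm{\beta_G}$-curve.

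\textbf{Handleslides.} A handleslide does not change the group of periodic domains up to the obvious identification. The $\beta$-span in $H_1(\Sigma)$ is unchanged, so a periodic domain for the new diagram corresponds to one for the old diagram. The two differ only by adding multiples of the thin annular region between the slid curve and the handleslide's push-off, so positivity of coefficients away from that annulus is unaffected. To make this rigorous I would do the following. Note that the handleslide can be realised as an isotopy followed by a "thin" slide. Then use the standard fact that admissibility is characterised homologically: for each nonzero class in $\pi_2$ with zero multiplicity at the basepoints, the domain has a negative coefficient somewhere.

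\textbf{Finger moves.} An isotopy of a $\beta$-curve changes each periodic domain $P$ to $P'$, and $P'$ agrees with $P$ outside the finger. Inside the finger, the new small regions inherit multiplicities from the regions the finger passes through; these are the multiplicities of $P$ on either side of the $\alpha$-arcs crossed. So every multiplicity appearing in $P'$ already appears in $P$. Hence if $P$ has both signs, so does $P'$. Conversely, the correspondence $P\leftrightarrow P'$ is a bijection preserving the multiplicities at basepoints, since finger moves are supported away from the basepoints and from $A$. Therefore admissibility is preserved.

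\textbf{Finiteness and the main obstacle.} Since the sequence in Lemma~\ref{con:nice} is finite, induction on its length gives the result. The main obstacle I expect is making the finger-move claim precise: a finger move can cut an old region into pieces. The pieces lie on the two sides of the finger and carry the multiplicities of the original region, except for the strip inside the finger, whose multiplicity is that of the region behind the crossed $\alpha$-arc. So no new values are created, and I would verify this by a local picture.

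An alternative route, if the inductive argument stalls, is to use niceness directly. In the terminal diagram every region without a basepoint is a bigon or square, and the regions containing $z_1$ meet every $\alpha$- and $\beta$-curve near the $c_i$. I would then argue that a nonnegative periodic domain avoiding basepoints would have to contain a region adjacent to $z_1$, which is impossible. This would give admissibility directly.
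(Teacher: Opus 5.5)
\textbf{The gap: the finger-move step.} As written, this step is wrong. Write $\partial P=\sum k_i\alpha_i+\sum m_l\beta_l$, and let $F$ be the thin finger with $\partial F=\beta_j'-\beta_j$. The corresponding new domain is $P'=P+m_jF$. So over an old region $R$ crossed by the finger, $P'$ has multiplicity $n_R(P)+m_j$. It is not ``the multiplicity of the region behind the crossed $\alpha$-arc.'' After the first $\alpha$-arc this is typically a value $P$ never takes, so your claim that no new values are created is false.

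The inference is also backwards. Even if $\mathrm{values}(P')\subseteq\mathrm{values}(P)$ held, it would not show that $P'$ has both signs. What you need is the reverse inclusion, and that one does hold: $P'=P$ off the finger, and a sufficiently thin finger contains no old region. So every old region keeps an open piece on which $P'$ takes its old value, and a positive and a negative coefficient of $P$ both survive in $P'$.

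The handleslide case needs exactly the same observation. Replace $F$ by the thin pair of pants $T$ with $\partial T=\beta_1'-\beta_1-\beta_2$, so that $P'=P+m_1T$. Saying that admissibility is ``characterised homologically'' is just the definition and does not supply this. Note too that the argument only works because every move in Lemma~\ref{con:planardomains} and Lemma~\ref{con:nice} is a finger move or a thin slide supported away from the basepoints. A general contact class preserving isotopy that cancels bigons can destroy admissibility.

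\textbf{Comparison with the paper.} With these repairs your induction goes through, but it is unnecessary. Your base case, the admissibility lemma of Section~\ref{subsec:admissibility}, is itself proved by a local argument at the $c_i$, and the paper simply runs that argument on the terminal diagram. The moves leave $\bm{\alpha_G},\bm{\beta_G}$ unchanged and keep a diagonally opposite pair of quadrants at each $c_i$ in the region of $z_1$.

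Suppose $P$ has $n_{z_1}=0$, and at $c_i$ it jumps by $k$ across $\alpha_i$ and by $m$ across $\beta_i$. The two $z_1$-quadrants then force $m=\pm k$, and the other two quadrants carry $k$ and $-k$. Hence $P$ has both signs unless $\partial P$ involves only $\bm{\alpha_G}\cup\bm{\beta_G}$. In that case $P$ vanishes on the connected surface $\Sigma_S\cup\Sigma_P$, so it is a periodic domain of the admissible diagram $(\Sigma_G,\bm{\alpha_G},\bm{\beta_G})$. This argument does not care which moves were performed.

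Your ``alternative route'' points toward the $c_i$ but leans on niceness, which plays no role here. The local computation at $c_i$ is the idea that actually makes it work.
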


\begin{proof}
    We chose the Heegaard diagram $(\Sigma_G,\bm{\alpha}_G,\bm{\beta}_G)$ to be admissible. Admissibility now follows from the fact that of the four domains adjacent to $c_i$, a diagonally opposite pair contain $z_1$ and consequently must have multiplicity zero.
\end{proof}

\subsection{The next-to-bottom grading}\label{subsec:nexttobottom}

We can now prove the following more general version of the main theorem under weaker hypotheses.

\begin{theorem}\label{prop:generalrankbound}
    Suppose that $K$ is a non-trivial knot in a $3$-manifold $Y$. Let $S$ be a Seifert surface of $K$ such that we can construct a Heegaard diagram  as in Section~\ref{subsubsec:HFhat} with $\Sigma_P$ neither a disk nor an annulus. Let $V$ denote the summand of $\widehat{\HFK}(-K,-Y,[-S],-g(S))$ generated by contact class like generators. Then  $\widehat{\HFK}(-K,-Y,[-S],1-g(S))$ contains a $V[1]$ summand.

    \end{theorem}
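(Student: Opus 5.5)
We adapt Baldwin and Vela-Vick's argument for fibered knots to the nice diagram produced by Lemma~\ref{con:nice}. The plan has four steps: build a distinguished generator in the next-to-bottom grading for each contact class like generator, show these form a subcomplex isomorphic to the bottom complex, show that subcomplex splits off, and check the Maslov shift.

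\emph{Step 1: the diagram and the new generators.} Fix a diagram $\widehat{\mathcal{H}}(-K)$ as in Section~\ref{subsubsec:HFhat}. Using Lemmas~\ref{con:planardomains}, \ref{con:nice} and \ref{lem:admissible}, modify it by contact class preserving isotopies and handleslides until it is nice and admissible. Lemmas~\ref{lem:contactclasspreservingisotopies} and \ref{lem:contactclasspreservinghandleslides} ensure that the bottom grading is still spanned, modulo the relevant summand, by contact class like elements $\bm{e}\cup\bm{c}$. Since $\Sigma_P$ is neither a disk nor an annulus, we can choose one of the arcs, say $a_1$ (so $\alpha_1$ and $\beta_1$), so that $\beta_1$ meets $\alpha_1$ at a point $c_1'$ in $\Sigma_{G\cup P}$ adjacent to the region containing $w$. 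This mimics the intersection point $c_1'$ that Baldwin and Vela-Vick use, and it can be arranged by a contact class preserving finger move of $\beta_1$ near $\partial\Sigma_S$ (this is where the green curve of Figure~\ref{fig:protoHD} enters). By Equation~\ref{eq:Alexandergrading}, each generator $\bm{e}\cup\{c_1'\}\cup\bm{c}\setminus\{c_1\}$ lies in Alexander grading $A_{\text{min}}+1$. Define $F:\CF(\mathcal{H}(-S))\to\widehat{\CFK}(A_{\text{min}}+1)$ by $\bm{e}\mapsto\bm{e}\cup c_1'\cup(\bm{c}\setminus c_1)$.

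\emph{Step 2: $F$ is a chain map.} Because the diagram is nice, the differential counts only empty embedded bigons and rectangles avoiding $z$ and $w$. The basepoint $z_1$ sits in diagonally opposite corners at every $c_i$, and $w$ is adjacent to $c_1'$. From this one checks that a bigon or rectangle leaving or entering $F(\bm{e})$ must be constant at $c_1'$ and at the $c_i$ for $i\neq1$. Hence disks out of $F(\bm{e})$ correspond exactly to disks in $\mathcal{H}(-S)$, as in Lemma~\ref{lem:chain}.

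\emph{Step 3: the image splits off.} Write $\partial=\partial_K+\partial_w$, where $\partial_K$ is the knot Floer differential and $\partial_w$ counts disks through $w$ with $n_z=0$; together they give the $\widehat{\CF}$ differential. The key computation, following Baldwin and Vela-Vick, is that there is a single empty bigon or rectangle from $F(\bm{e})$ to $\bm{e}\cup\bm{c}$ passing once over $w$. This gives $\partial_w F=f+(\text{other terms})$, where $f$ is the contact class map, which is an isomorphism onto $V$ by Proposition~\ref{prop:satiatedimpliesgeneration} and Section~\ref{subsec:independence}. We then consider the map $\Phi:\widehat{\HFK}(A_{\text{min}}+1)\to\widehat{\HFK}(A_{\text{min}})$ induced by counting the rectangles through $w$ that land in contact class like generators, followed by projection onto $V$. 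This is a chain-level map between the associated graded pieces: it is the component of the $E_1$ differential in the spectral sequence, which is a chain map for the filtered complex. We have $\Phi\circ F_*=\mathrm{id}_V$. So $F_*(V)$ is a direct summand, isomorphic to $V$.

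\emph{Step 4: grading shift.} The bigon from $F(\bm{e})$ to $f(\bm{e})$ has Maslov index one and $n_w=1$. Since the Maslov grading in $\widehat{\CFK}$ drops by $1-2n_w$ along such a disk, $F(\bm{e})$ has Maslov grading one greater than $f(\bm{e})$. After the orientation conventions relating $-K$ to $K$, this gives the $V[1]$ summand.

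\textbf{Main obstacle.} Step 3 is the hardest part. We must show that, among disks through $w$ out of $F(\bm{e})$, the only ones reaching contact class like generators are the canonical ones, so that the composite really is the identity on $V$. Equivalently, $\Phi$ must be triangular with respect to $\bm{e}\mapsto\bm{e}$. I would first attempt this by a local multiplicity analysis near $\partial\Sigma_S$ and $\Sigma_{P,1}$, as in the proof of Lemma~\ref{lem:inj}, using niceness to restrict to empty rectangles. If that fails, I would filter by the energy/area of domains to obtain upper-triangularity instead.
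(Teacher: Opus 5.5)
\paragraph{The gap: existence of $c_1'$.} The gap sits in Steps 1 and 3, at the point you treat as routine: the existence of $c_1'\in\alpha_1\cap\beta_1$ whose only relevant disks are a single $w$-disk to $c_1$.

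You propose to create $c_1'$ by a contact class preserving finger move. This cannot work. Such a move creates points of $\alpha_1\cap\beta_1$ in pairs joined by an empty bigon missing both basepoints. More to the point, contact class preserving isotopies do not change whether $c_1$ is a boundary in the two-curve complex $\CF(\Sigma\setminus\nu(z_1),\alpha_1,\beta_1)$, in which disks may cross $w$. That is global information about the diagram. It holds when $\beta_1$, pulled tight, leaves $\alpha_1$ strictly to one side; this is the Honda--Kazez--Mati\'c non-right-veering mechanism behind Baldwin--Vela-Vick. It fails otherwise.

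In fact no argument of your uniform shape can work. Take $G=\emptyset$, the fibered case, which the hypotheses allow. Then $V$ is the whole bottom group, spanned by $\bm{c}$, and $\bm{c}$ represents the contact invariant of the supported contact structure. Your Step 3 would give $d_1F_*(\bm{c})=\bm{c}$, which kills $\bm{c}$ in the spectral sequence to $\widehat{\HF}(-Y)$. That is false whenever this invariant is nonzero, e.g.\ when the supported structure is Stein fillable, as for identity monodromy. So the real difficulty is not the triangularity of $\Phi$ but existence, and existence forces a case analysis.

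\paragraph{How the paper handles this.} The hypothesis on $\Sigma_P$ is used to choose $\alpha_1$ with $\alpha_1\cap\Sigma_G=\emptyset$, so that in the nice diagram $\alpha_1$ misses $\bm{\beta_G}$. It is not used to produce $c_1'$. The proof then splits into two cases.
\begin{enumerate}
\item If $\alpha_1$ and $\beta_1$ are not isotopic, one reduces to $\beta_1$ lying strictly to the left of $\alpha_1$. The other side is handled by the symmetry $\widehat{\HFK}_m(-Y,-K,[-S],A)\cong\widehat{\HFK}_{-2A-m}(Y,-K,[-S],A)$. In the left case there are points $x_j\in\alpha_1\cap\beta_1$ with $\partial\sum_j x_j=c_1$ in the two-curve complex. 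This must be a sum: the basepoint-free bigons among the $x_j$ cancel in the sum, which a single $c_1'$ need not achieve. Replacing $c_1$ by $x_j$ gives $\partial_{\widehat{\CF}}\sum_j\bm{d}_j=\bm{c}(\bm{x})+\sum_j(\partial\bm{x})\cup\{x_j\}\cup\{c_i\}_{i\neq 1}$. This yields a degree $-1$ surjection from $\widehat{\HFK}(-K,-Y,[-S],1-g)$ onto $V$.
\item If $\alpha_1$ and $\beta_1$ are isotopic, no such $x_j$ exist. Then $Y$ has an $S^1\times S^2$ summand, and $d_1$ is replaced by the $H_1$-action $A^1_\xi$, with $\xi$ dual to the $S^2$. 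This action is computed using the green curve of Figure~\ref{fig:protoHD}; that is what the green curve is for.
\end{enumerate}

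\paragraph{Two smaller points.} First, along your disk the grading relevant for $\widehat{\HF}$ drops by $\mu-2n_z=1$, not by $1-2n_w$. Your formula has the wrong sign, though your conclusion is correct. Second, Proposition~\ref{prop:satiatedimpliesgeneration} assumes a $w$-avoiding exterior, which is not a hypothesis here. It is also not needed, since $V$ is by definition spanned by contact class like classes.
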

    Here $[1]$ denotes an upward shift in the Maslov grading by one. We do not require $S$ to be minimal genus, though in the case that it is not, the statement is vacuously true, since $V=0$ by the $3$-dimensional adjunction inequality.

\begin{proof}[Proof of Theorem~\ref{prop:generalrankbound}]
Consider a Heegaard diagram $\widehat{\mathcal{H}}(-K_\psi)$ for $(-K_\psi, -Y)$ as in Section~\ref{subsubsec:HFhat}, where we impose the additional requirement that $\alpha_1\cap\Sigma_G=\emptyset$. The last condition is guaranteed by the technical hypothesis of the theorem.  By applications of Lemma~\ref{con:nice} and~\ref{lem:admissible} we can obtain a nice admissible Heegaard diagram for $\mathcal{H}$ with the property that $\alpha_1$ is disjoint from all of the $\bm{\beta}_G$-curves. 

We first treat the case in which $\alpha_1$ and $\beta_1$ are not isotopic. We can assume without loss of generality that $\beta_1$ is strictly to the left of $\alpha_1$ in the sense that after isotoping $\alpha_1$ and $\beta_1$ so that they agree in $\Sigma_S$ and intersect minimally in $\Sigma_G\cup\Sigma_P$, $\beta_1$ is strictly to the left of $\alpha_1$ in $\Sigma$. The symmetry properties of knot Floer homology (See~\cite[Section 3]{Holomorphicdisksandknotinvariants}) take care of the case in which $\alpha_1$ is not sent strictly to the left. Specifically we have that 
$$\widehat{\HFK}_m(-Y,-K,[-S],A)\cong \widehat{\HFK}_{-m}(Y,-K,[-S],-A)\cong \widehat{\HFK}_{-2A-m}(Y,-K,[-S],A)$$
and that if $(-Y,-K)$ admits a Heegaard diagram in which $\beta_1$ is to the right of $\alpha_1$, then $(Y,-K)$ admits a Heegaard diagram in which $\beta_1$ is strictly to the left of $\alpha_1$.

Consider the sutured Heegaard diagram $\mathcal{H}'=(\Sigma_S\cup\Sigma_P\cup\Sigma_G,\alpha_1,\beta_1)$ obtained from $\mathcal{H}$ by removing all of the $\alpha$ and $\beta$-curves save for $\alpha_1$ and $\beta_1$. Let $(Y,\gamma)$ be the corresponding sutured manifold. Consider $\CF(\Sigma_S\cup\Sigma_P\cup\Sigma_G\setminus \nu(z_1),\alpha_1,\beta_1)$. This comes endowed with a filtration defined by $\mathcal{F}(x)=\langle c_1(\s(x),t),[\Sigma_S]\rangle$.

We claim that there are elements $x_i\in\alpha_1\cap\beta_1$ such that $\partial_{\CF}\sum x_i=c_1$. To verify this observe that since we have assumed without loss of generality that after pulling tight $\beta_1$ lies to the right of $\alpha_1$ after removing excess bigons via contact class preserving isotopies we must have the claim in this setting. Observe that contact class preserving isotopies preserve the class $[c_1]\in \CF(\Sigma_S\cup\Sigma_P\cup\Sigma_G\setminus (\nu(z_1)\cup\nu(w_{4g})),\alpha_1,\beta_1)$ as in the proof of Lemma~\ref{lem:contactclasspreservingisotopies}. Note that $\partial_{\CF}(x_i)$ can be computed purely as a count of bigons.

Consider now the set of contact class like elements, $\bm{c}(\bm{x})$, in $\widehat{\CFK}(\widehat{\mathcal{H}}(K_\psi))$. Recall that these are linear combinations of generators obtained by taking linear combinations of generators $\bm{x}\in\T_{\bm{\alpha_G}}\cap\T_{\bm{\beta_G}}$ and appending the intersection points $c_i$ to each generator. Consider too the generators $\bm{d}_j$ that are obtained by replacing each instance of $c_1$ in a contact class like element with the element $x_j$. Consider $\partial_{\widehat{\CF}}{\bm{d}_j}$. Observe that since the diagram is nice, the differential only counts bigons and squares. These are constant on $c_i$ for all $i\neq 1$. Indeed, since $\alpha_1$ does not intersect any of the $\bm{\beta_G}$-curves we see that $$\partial_{\widehat{\CF}}\sum_{j}\bm{d}_j=\bm{c}(\bm{x})+\sum_j(\partial_{\CF(\mathcal{H}(G))} \bm{x})\cup\{x_j\}\cup\{c_i:i\neq 1\}.$$ In particular, $\partial_{\widehat{\CF}}\sum_j\bm{d}_j=\bm{c}(\bm{x})$ for contact class like cycles, so that $\partial_{\CF}^*$ is a surjection onto $V$. Since $\partial_{\widehat{\CF}}^*$ lowers the Maslov grading by one, the result follows.

We now proceed to the case in which $\alpha_1$ and $\beta_1$ are isotopic. Observe that in this case $Y$ has an $S^1\times S^2$ summand. Consider $\xi$ the Poincar\'e dual of $S^2$. The argument now follows exactly as in the case that $\beta_1$ is strictly to the left of $\alpha_1$, except instead of computing $\partial_{\widehat{\CF}}^*$ we compute $A_\xi^1$, where $A_\xi^1:\widehat{\HFK}(-K,-Y,[-S],1-g(S))\to \widehat{\HFK}(-K,-Y,[-S],-g(S))$ is the homological action of $\xi$ on knot Floer homology. This can be computed using the green curve representing $\xi$ shown in Figure~\ref{fig:protoHD}, and decreases the Maslov grading by one. See~\cite[Section 9]{Holomorphicdisksandknotinvariants} for details. In particular $A_\xi^1$ factors through a surjection $$\widehat{\HFK}(-K,-Y,[-S],1-g(S))\surj V,$$ concluding the proof. \end{proof}

We can now prove the main theorem;

\begin{proof}[Proof of Theorem~\ref{thm:mainnoMaslov}]
    By Proposition~\ref{prop:satiatedimpliesgeneration} we have that $\widehat{\HFK}(-K,-Y,[-\Sigma]-g)$ is generated by contact class like generators. The result now follows from Theorem~\ref{prop:generalrankbound} and the symmetry properties of knot Floer homology.
\end{proof}

\bibliographystyle{alpha}
\bibliography{bibliography}
\end{document}